\numberwithin{equation}{section}  
\title{Near-critical SIR epidemic on a random graph with given degrees}
\date{30 March, 2015; revised 21  December, 2015}
\author{Svante Janson}
  \thanks{SJ  supported by the Knut and Alice Wallenberg Foundation}
\author{Malwina Luczak}\thanks{ML supported by an EPSRC Leadership Fellowship EP/J004022/2}
\author{Peter Windridge}
\author{Thomas House}\thanks{TH supported by EPSRC} 
\newcommand\urladdrx[1]{{\urladdr{\def~{{\tiny$\sim$}}#1}}}
\address{Department of Mathematics, Uppsala University, PO Box 480,
SE-751~06 Uppsala, Sweden}
\email{svante.janson@math.uu.se}
\address{School of Mathematical Sciences, Queen Mary University of London, Mile End Road, London, E1 4NS, UK.}
\email{m.luczak@qmul.ac.uk}
\address{School of Mathematical Sciences, Queen Mary University of London, Mile End Road, London, E1 4NS, UK.}
\email{pete@windridge.org.uk}
\urladdr{http://peter.windridge.org.uk/}
\address{School of Mathematics, University of Manchester, Manchester, M13
  9PL, UK}
\email{thomas.house@manchester.ac.uk}
\urladdr{http://personalpages.manchester.ac.uk/staff/thomas.house/}
\renewcommand\le{\leqslant}
\renewcommand\ge{\geqslant}
\newtheorem{theorem}{Theorem}[section]
\newtheorem{lemma}[theorem]{Lemma}
\newtheorem{corollary}[theorem]{Corollary}
\theoremstyle{definition}
\newtheorem{remark}[theorem]{Remark}
\theoremstyle{remark}
\newenvironment{romenumerate}[1][-10pt]{
\addtolength{\leftmargini}{#1}\begin{enumerate}
 \renewcommand{\labelenumi}{\textup{(\roman{enumi})}}%
 \renewcommand{\theenumi}{\textup{(\roman{enumi})}}%
 }{\end{enumerate}}
\newcounter{oldenumi}
{\setcounter{oldenumi}{\value{enumi}}
\begin{romenumerate} \setcounter{enumi}{\value{oldenumi}}}
{\end{romenumerate}}
\newcounter{thmenumerate}
\newcounter{xenumerate}   
\newcommand\pfitemx[1]{\par#1:}
\newcommand\pfitemref[1]{\pfitemx{\ref{#1}}}
\newcommand{\refT}[1]{Theorem~\ref{#1}}
\newcommand{\refL}[1]{Lemma~\ref{#1}}
\newcommand{\refR}[1]{Remark~\ref{#1}}
\newcommand{\refS}[1]{Section~\ref{#1}}
\newcommand{\refApp}[1]{Appendix~\ref{#1}}
\xdef\klockan{\the\count1.0\the\count255}
\xdef\klockan{\the\count1.\the\count255}\fi
\newcommand{\sumk}{\sum_{k=0}^\infty}
\newcommand{\sumki}{\sum_{k=1}^\infty}
\newcommand\set[1]{\ensuremath{\{#1\}}}
\newcommand\xpar[1]{(#1)}
\newcommand\bigpar[1]{\bigl(#1\bigr)}
\newcommand\Bigpar[1]{\Bigl(#1\Bigr)}
\newcommand\biggpar[1]{\biggl(#1\biggr)}
\newcommand\lrpar[1]{\left(#1\right)}
\newcommand\Bigcpar[1]{\Bigl\{#1\Bigr\}}
\newcommand\abs[1]{|#1|}
\newcommand\bigabs[1]{\bigl|#1\bigr|}
\newcommand\Bigabs[1]{\Bigl|#1\Bigr|}
\newcommand\biggabs[1]{\biggl|#1\biggr|}
\newcommand\lrabs[1]{\left|#1\right|}
\def\rompar(#1){\textup(#1\textup)}    
\newcommand\xfrac[2]{#1/#2}
\newcommand\xqfrac[2]{#1/(#2)}
\newcommand\Bigparfrac[2]{\Bigpar{\frac{#1}{#2}}}
\def\xexp(#1){e^{#1}}
\newcommand\floor[1]{\lfloor#1\rfloor}
\newcommand\nn{[n]}
\newcommand\ntoo{\ensuremath{{n\to\infty}}}
\newcommand\punkt[1]{\if.#1\else.\spacefactor1000\fi{#1}}
\newcommand\iid{i.i.d\punkt}
\newcommand\ie{i.e\punkt}
\newcommand\eg{e.g\punkt}
\newcommand\cf{cf\punkt}
\newcommand{\as}{a.s\punkt}
\newcommand\whp{w.h.p\punkt}
\newcommand{\tend}{\longrightarrow}
\newcommand\dto{\overset{\mathrm{d}}{\tend}}
\newcommand\pto{\overset{\mathrm{p}}{\tend}}
\newcommand\eqd{\overset{\mathrm{d}}{=}}
\newcommand\op{o_{\mathrm p}}
\newcommand\Op{O_{\mathrm p}}
\newcommand\bbR{\mathbb R}
\newcounter{CC}
\newcounter{cc}
\newcommand{\cc}{\stepcounter{cc}\ccx} 
\newcommand{\ccx}{c_{\arabic{cc}}}     
\newcommand{\ccdef}[1]{\xdef#1{\ccx}}     
\newcommand\E{\operatorname{\mathbb E{}}}
\renewcommand\P{\operatorname{\mathbb P{}}}
\newcommand\Var{\operatorname{Var}}
\newcommand\Exp{\operatorname{Exp}}
\newcommand\Po{\operatorname{Po}}
\newcommand\Bin{\operatorname{Bin}}
\newcommand\ga{\alpha}
\newcommand\gb{\beta}
\newcommand\gd{\delta}
\newcommand\gD{\Delta}
\newcommand\gam{\gamma}
\newcommand\gl{\lambda}
\newcommand\go{\omega}
\newcommand\gs{\sigma}
\newcommand\gss{\sigma^2}
\newcommand\eps{\varepsilon}
\newcommand\cC{\mathcal C}
\newcommand\cE{\mathcal E}
\newcommand\cF{\mathcal F}
\newcommand\cL{{\mathcal L}}
\newcommand\cR{{\mathcal R}}
\newcommand\cZ{{\mathcal Z}}
\newcommand\qw{^{-1}}
\newcommand\qww{^{-2}}
\newcommand\qq{^{1/2}}
\newcommand\qqw{^{-1/2}}
\newcommand\qqq{^{1/3}}
\renewcommand{\=}{:=}
\newcommand\intoi{\int_0^1}
\newcommand\intoo{\int_0^\infty}
\newcommand\dd{\,\mathrm{d}}
\newcommand\rhs{right-hand side}
\newcommand\gnp{\ensuremath{G(n,p)}}
\newcommand\gnm{\ensuremath{G(n,m)}}
\newcommand\fhe{free half-edge}
\newcommand\fihe{free infective half-edge}
\newcommand{\Prob}{\mathbb{P}}
\newcommand{\N}{\mathbb{N}}
\newcommand{\Indi}[1]{\mathop{\mathbbm{1}_{#1}}\nolimits}
\newcommand{\iEnd}{T^*}
\newcommand{\fXSn}{h_{\mathrm{S},n}}
\newcommand{\fXIn}{h_{\mathrm{I},n}}
\newcommand{\fXRn}{h_{\mathrm{R},n}}
\newcommand\nS{n_{\mathrm{S}}}
\newcommand\nI{n_{\mathrm{I}}}
\newcommand\nR{n_{\mathrm{R}}}
\newcommand{\nk}{n_{k}}
\newcommand\nSk{n_{\mathrm{S},k}}
\newcommand\nIk{n_{\mathrm{I},k}}
\newcommand\nRk{n_{\mathrm{R},k}}
\newcommand\nSx[1]{n_{\mathrm{S},#1}}
\newcommand\nIx[1]{n_{\mathrm{I},#1}}
\newcommand{\X}[1]{X_{#1}}
\newcommand{\XS}[1]{X_{\mathrm{S},#1}}
\newcommand{\XI}[1]{X_{\mathrm{I},#1}}
\newcommand{\XR}[1]{X_{\mathrm{R},#1}}
\newcommand{\Sv}[1]{S_{#1}}
\newcommand{\Iv}[1]{I_{#1}}
\newcommand{\Rv}[1]{R_{#1}}
\newcommand{\Svk}[1]{S_{#1}(k)}
\newcommand{\Svkm}[1]{\tilde{S}_{#1}(k)}
\newcommand{\Rzero}{\mathcal{R}_0}
\newcommand{\DSn}{D_{\mathrm{S},n}}
\newcommand{\DSl}{D_{\mathrm{S}}}
\newcommand{\hDS}{\hat D_{\mathrm{S}}}
\newcommand{\dSi}{d_{\mathrm{S},i}}
\newcommand{\dSmax}{d_{\mathrm{S},*}}
\newcommand{\dImax}{d_{\mathrm{I},*}}
\newcommand{\dIi}{d_{\mathrm{I},i}}
\newcommand{\FIt}{F_{i,t}}
\newcommand\Gr{G}
\newcommand\GrCM{G^*}
\newcommand\xI{x_{\mathrm{I}}}
\newcommand\xR{x_{\mathrm{R}}}
\renewcommand\nn{^{(n)}}
\newcommand\Zzs{Z_{0,*}}
\newcommand\fs{\cZ}
\newcommand\xl{\varkappa}
\newcommand\xxl{(1 + \sqrt{1+2\nu\gliii})/\gliii}
\newcommand\gan{\ga_n}
\newcommand\tgan{\tilde\ga_n}
\newcommand\Zx[1]{\bar Z_{#1}}
\newcommand\Zm{\Zx{m}}
\newcommand\Zmi{\Zx{m+1}}
\newcommand\glii{\gl_2}
\newcommand\gliii{\gl_3}
\newcommand\mxx{m_{**}}
\newcommand\mx{m_*}
\newcommand\deps{\delta}
\newcommand\bZ{\Zx{}}
\newcommand\dxx{\overline{d}}
\newcommand\hW{\widehat W}
\newcommand\cenab{\cE_n^{a,b}}
\newcommand\tand{\text{ and }}
\newcommand\kk{\chi}
\newcommand\Skorohod{Skorohod}
\begin{document}

\keywords{SIR epidemic, random graph with given degrees, configuration model, critical window.}
\subjclass[2010]{05C80, 60F99, 60J28, 92D30}

\begin{abstract}
Emergence of new diseases and elimination of existing diseases is a key public health issue. In mathematical models of epidemics, such phenomena involve the process of infections and recoveries passing through a critical threshold where the basic reproductive ratio is 1.

In this paper, we study near-critical behaviour in the context of a susceptible-infective-recovered (SIR) epidemic on a random
(multi)graph on $n$ vertices with a given degree sequence. We concentrate on
the regime just above the threshold for the emergence of a large epidemic,
where the basic reproductive ratio is $1 + \omega (n) n^{-1/3}$, with
$\omega (n)$ tending to infinity slowly as the population size, $n$, tends
to infinity. We determine the probability  that a large epidemic occurs, and
the size of a large epidemic.

Our results require basic regularity conditions on the degree sequences, and
the assumption that the third moment of the degree of a random susceptible
vertex stays uniformly bounded as $n \to \infty$. As a
corollary, we determine the probability and size of a large
near-critical epidemic on a standard binomial random graph in the `sparse' regime, where the average degree is constant.
As a further consequence of our method, we
obtain an improved result on the size of the giant component in a random
graph with given degrees just above the
critical window, proving a conjecture by Janson and Luczak.
\end{abstract}

\maketitle

\section{Introduction}
\label{s:intro}

Infectious diseases continue to pose a serious threat to individual and public health. Accordingly, health organisations are constantly seeking to analyse and assess events that may present new challenges. These may include acts of bioterrorism, and other events indicating emergence of new infections, which threaten to spread rapidly across the globe facilitated by the efficiency of modern transportation. Likewise, a lot of effort is being directed into suppressing outbreaks of established diseases such as influenza and measles, as well as into eliminating certain endemic diseases, such as polio and rabies.

In an SIR epidemic model, an infectious disease spreads through a population where each individual is either susceptible, infective or recovered. The population is represented by a network (graph) of contacts,
where the vertices of the network correspond to individuals and the edges correspond to potential infectious contacts. Different individuals will have different lifestyles and patterns of activity, leading to different numbers of contacts; for simplicity, we assume that each person's contacts are randomly chosen from among the rest of the population. The degree of a vertex is the number of contacts of the corresponding individual.

We assume that infectious individuals become recovered at rate $\rho \ge 0$ and infect each neighbour at rate $\beta > 0$.
Then the basic reproductive ratio $\cR_0$ (i.e.\ the average number of secondary cases of infection arising from a single case) is given by the average size-biased susceptible degree times the probability that a given infectious contact takes place before the infective individual recovers.

Emergence and elimination of a disease involves the process of infectious transitions and recoveries being pushed across a critical threshold, usually corresponding to the basic reproductive ratio $\cR_0$ equal to 1, see~\cite{ARKB03,BD03,RD13,S09}.
For example, a pathogen mutation can increase the transmission rate and make
a previously `subcritical' disease (i.e.\ not infectious enough to cause a
large outbreak) into a `supercritical' one, where a large outbreak may
occur, see~\cite{ARKB03}. Moreover, after a major outbreak in
the supercritical case, disease in the surviving population is
subcritical. However, subsequently, as people die and new individuals are
born (i.e.\ immunity wanes), $\cR_0$ will slowly increase, and, when it
passes 1,
another major outbreak may occur. Equally, efforts at disease
control may result in subcriticality for a time, but then inattention may
lead to an unnoticed parameter shift to supercriticality.
Thus, under certain conditions, one can expect most large outbreaks to occur close to criticality, and so
there is practical interest in theoretical understanding of the behaviour of near-critical epidemics.

Critical SIR epidemics have been studied for populations with complete mixing, under different assumptions, by~\cite{ben2004size,GMMG,HJL,M};
this is equivalent to studying epidemic processes on the complete graph, or on the Erd\H{o}s-R\'enyi graph $G(n,p)$.
In \cite{ben2004size}, near-criticality is discussed using non-rigorous arguments. Martin-L\"of~\cite{M} studies a generalized Reed-Frost epidemic model, where the number of individuals that a given infective person infects has an essentially arbitrary distribution. The binomial case is equivalent to studying the random graph $G(n,p)$ on $n$ vertices with edge probability $p$.
The author considers the regime where $\cR_0 - 1 = a n^{-1/3}$ and the initial number of infectives is $b n^{1/3}$, for constant $a,b$. A limit distribution is derived for the final size of the epidemic, observing bimodality for certain values of $a$ and $b$ (corresponding to `small' and `large' epidemics). Further analytical properties of the limit distribution are derived in~\cite{HJL}. In~\cite{GMMG}, a standard SIR epidemic for populations with homogeneous mixing is studied, with vaccinations during the epidemic; a diffusion limit is derived for the final size of a near-critical epidemic.

In the present paper,
we address near-critical phenomena in the context of an epidemic spreading in a population of a large size $n$, where
the underlying graph (network) is
a random (multi)graph with given vertex degrees. In other words, we specify the number of contacts for each individual, and consider a graph chosen uniformly at random from among all graphs with the specified sequence of contact numbers.
This random graph model allows for greater inhomogeneity, with a rather arbitrary distribution of the number of contacts for different persons.
We study the regime just above the
critical threshold for the emergence of a large epidemic, where the basic
reproductive ratio is $1 + \omega (n) n^{-1/3}$, with $\omega (n)$ growing large as the population size $n$ grows. (For example, when the population size is about 1 million, we could consider $\cR_0$ of order about $1.01$.)

From the theory of branching processes, at the start of an epidemic, each
infective individual leads to a large outbreak with probability of the order
$\cR_0-1$.
Roughly, our results confirm the following, intuitively clear from the above observation,
picture. If the size $n$ of the population is very large, with
the initial total infectious degree $\XI0$ (i.e.\ total number of potential infectious contacts at the beginning of the epidemic or total number of acquaintances of initially infectious individuals) much larger than
$(\cR_0-1)^{-1}$, then a large epidemic will occur with high probability.
If the initial total infectious degree is much smaller than
$(\cR_0-1)^{-1}$, then the outbreak will be contained with high
probability. In the intermediate case where $\XI0$  and $(\cR_0-1)^{-1}$ are of
the same order of magnitude,
a large epidemic can occur with positive probability, of the order
$\exp \bigpar{-c \XI0 (\cR_0-1)}$, for some positive constant $c$.
So, if the population size is about a million, and $\cR_0$ about $1.01$,
then $\XI0$ much larger than 100 will result in a large epidemic with high
probability. On the other hand, if $\XI0$ is less than 10, say, than the
outbreak will be contained with high probability.

Furthermore, we determine the likely size of a large epidemic. Here, there are three possible regimes, depending on the size of the initial total infectious degree relative to $n (\cR_0-1)^2$. Broadly speaking, if $\XI0$ is much larger than $n (\cR_0-1)^2$, then the total number of people infected will be proportional to $(n \XI0)^{1/2}$. On the other hand, if $\XI0$ is much smaller than $n (\cR_0-1)^2$ then, in the event that there is a large epidemic, the total number of people infected will be proportional to $n (\cR_0-1)$. The intermediate case where $\XI0$ and $n (\cR_0-1)^2$ are of the same order `connects' the two extremal cases.

Note that, if $\XI0$ is of the same or larger order of magnitude than $n (\cR_0-1)^2$ (the first and third case in the paragraph above), then $\XI0 (\cR_0-1)$ is very large, so a large epidemic does occur with high probability. This follows since, by our assumption, $n(\cR_0-1)^3 = \omega (n)^3$ is large for large $n$.

The above results are proven under fairly mild regularity assumptions on the
shape of the degree distribution. We allow a non-negligible proportion of
the population to be initally recovered, i.e.\ immune to the disease.
(This also allows for the possibility that a part, not necessarily random,
of the population is vaccinated before the outbreak, since the vaccinated
individuals can be regarded as recovered.)
We
require that the third moment of the susceptible degree be bounded; in
particular, that implies that the maximum susceptible degree in a population
of size $n$ is of the order no larger than $n^{1/3}$. So in particular, in a
population of size 1 million, the super-spreaders (i.e.\ individuals with
largest numbers of contacts) should not be able to infect more than around
100 individuals.

To demonstrate this behaviour for a particular example, we used stochastic
simulations that make use of special Monte Carlo techniques that allow us to
consider multiple initial conditions within the same realisation of the
process. The algorithm is described in Appendix~\ref{app:sellke}.
Figure~\ref{fig:fs} shows our results for the relationship between the epidemic
final size $\mathcal{Z}$ and the initial force of infection $X_{I,0}$ for 20
realisations of the process, with each realisation involving multiple different
initial conditions, for population sizes $n=10^5$, $n=10^6$ and $n=10^7$.  The
model rate parameters are $\rho=1$ and $\beta=1$; and the network has Poisson
degree distribution with mean $\lambda=2.02$ meaning that $\mathcal{R}_0 =
1.01$.
(The network was generated as an Erd\H{o}s-R\'enyi random graph
with edge probability $\lambda/n$.)
These plots show the emergence of the three epidemic sizes that our
results predict as $n$ increases, i.e.\ `small' epidemics of size $O(1)$,
`large' epidemics of size proportional to $(nX_{I,0})^{1/2}$, and `large'
epidemics of size comparable to $n(\cR_0-1)$.

Epidemics on graphs with given degrees have been considered in a number of
recent studies, both within the mathematical biology and probability
communities. A set of ordinary differential equations approximating the time
evolution of a large epidemic were obtained by Volz~\cite{Volz_2007}, see
also Miller~\cite{miller11}, and  also Miller, Slim, and
Volz~\cite{MSV}. These papers consider the case where the epidemic starts
very small. Differential equations for an epidemic starting with a large
number of infectives appear in \citet{miller14}.
Convergence of the random process to these equations in the case where the
second moment of the degree of a random vertex is uniformly bounded (both
starting with only few infectives and with a large number of
infectives) was proven in \citet{JansonLuczakWindridgeSIR}. (See
also \citet{DDMT12} and \citet{BohmanPicollelli}, where related results are
proven in the case where the fifth moment of the degree of a random vertex
is uniformly bounded and in the case of bounded vertex degrees
respectively. See also~\cite{BR} for results in the case of bounded vertex
degrees and general infection time distributions.)

However, we appear to be the first to study the `barely
supercritical' SIR epidemic on a random graph with given degrees.
As a
corollary, we determine the probability and size of a large
near-critical epidemic on a sparse binomial (Erd\H{o}s-R\'enyi) random graph, also to our knowledge the first such results in the literature.

Our approach also enables us to prove the conjecture of Janson and Luczak in~\cite{JansonLuczakGiantCpt}, establishing their Theorem 2.4 concerning the size of the largest component in the barely supercritical random graph with given vertex degrees under weakened assumptions.

We proceed in the spirit of~\cite{JansonLuczakWindridgeSIR} and
\cite{JansonLuczakGiantCpt}, evolving the epidemic process simultaneously with constructing the random multigraph. The main technical difficulties involve delicate concentration of measure estimates for quantities of interest, such as the current total degrees of susceptible, recovered and infective vertices. Also, our proofs involve couplings of the evolution of the total infective degree with suitable Brownian motions.

The remainder of the paper is organised as follows.
In Section~\ref{s:notationresults}, we define our notation and state our
main results
(Theorems \ref{t:criticalSIR} and \ref{t:takeoff}).
Section~\ref{s:proof}
is devoted to the proof of Theorem~\ref{t:criticalSIR};
to this end, we define a time-changed version
of the epidemic and use the modified process to prove concentration of
measure estimates for various quantities of interest. In
Section~\ref{Spftakeoff}, we prove Theorem~\ref{t:takeoff}.
In Appendix \ref{app:cpts}, we state and prove a new result concerning the
size of the
giant component in the supercritical random (multi)graph with a given degree
sequence.

\section{Model, notation, assumptions and results}\label{s:notationresults}

Let $n \in \N$ and let $(d_i)_{i = 1}^n= (d_i^{(n)})_{i = 1}^n$ be a given sequence of non-negative integers.
Let $\Gr = \Gr(n, (d_i)_{i = 1}^n )$ be a simple graph (no loops or multiple edges) with $n$ vertices, chosen uniformly at random  subject to vertex $i$
having degree $d_i$ for $i=1, \ldots, n$, tacitly assuming there is any such graph at all ($\sum_{i = 1}^n d_i$ must be even, at least). For each $k \in {\mathbb Z}^+$, let $n_k$ denote
the total number of vertices with degree $k$. 

Given the graph $\Gr$, the epidemic evolves as a continuous-time Markov chain.  Each vertex is either
susceptible, infective or recovered.  Every infective vertex recovers at rate $\rho_n \ge 0$ and also infects each susceptible neighbour at rate $\beta_n  > 0$.

Let $\nS$, $\nI$, and $\nR$ denote the initial numbers of susceptible, infective and recovered vertices, respectively.
Further, let $\nSk$, $\nIk$ and $\nRk$ respectively, be the number of these vertices with degree $k \ge 0$.
Thus, $\nS + \nI + \nR = n$ and $\nS = \sum_{k=0}^\infty \nSk$, $\nI = \sum_{k=0}^\infty \nIk$, $\nR = \sum_{k=0}^\infty \nRk$, and $\nk = \nSk + \nIk + \nRk$.  We assume that this information is given with the degree sequence.
Note that all these quantities (as well as many of the quantities introduced
below) depend on $n$. To lighten the notation, we usually do not indicate
the $n$ dependence explicitly.

\begin{remark}
We allow $\nR>0$, \ie, that  some vertices are ``recovered'' (\ie, immune)
already when we start.
It is often natural to take $\nR=0$, but
one application of $\nR>0$ is to study the effect of vaccination; this was done
in a related situation in \cite{JansonLuczakWindridgeSIR} and we leave
the corresponding corollaries of the results below to the reader.
Note that initially recovered vertices are not themselves affected by the
epidemic, but they influence the structure of the graph and thus the course
of the epidemic, so we cannot just ignore them.
\end{remark}

The basic reproductive ratio $\cR_0$ is commonly used in the context of
epidemic models, and defines the average number of new cases created by a
case of infection.
In analogy with the limiting case in
\cite[(2.23)]{JansonLuczakWindridgeSIR},
for the SIR epidemic on a random graph with a given degree sequence, we define
\begin{equation}\label{R0}
{\mathcal R}_0 = {\mathcal R}_0^{(n)}
:= \frac{\beta_n}{\rho_n + \beta_n}
\frac{\sum_{k=0}^{\infty} (k-1)k\nSk}{\sum_{k=0}^{\infty} k n_k}.
\end{equation}
Here, the probability that an infective half-edge infects another
half-edge before recovering is $\frac{\beta_n}{\rho_n + \beta_n}$, and the
average increase in the number of infective half-edges due to such an
infection event is $\frac{\sum_{k=0}^{\infty}
  (k-1)k\nSk}{\sum_{k=0}^{\infty} k n_k}$, and these are approximately
independent of one another, and approximately independent for different
half-edges.

Note that the basic reproductive ratio $\cR_0$ determines the approximate geometric growth rate of the disease during the early stages of the epidemic.
The value $\cR_0 =1$ is therefore the threshold for the epidemic to take off in the population, in the sense that, if $\cR_0 > 1$, then a macroscopic fraction of the susceptibles can be infected~\cite{AnderssonSocialNetworks,
  Newman_2002,Volz_2007,BohmanPicollelli,JansonLuczakWindridgeSIR}. Here we
will consider the case where $\cR_0 = 1 + \omega (n) n^{-1/3}$,
with $\omega (n)$ tending to infinity slowly  (slower than
$n^{1/3}$) as $n \to \infty$.

It turns out that, rather than working with the quantity $\cR_0-1$, it is easier to work with  a quantity $\alpha_n$ defined by
\begin{equation}\label{e:alphan}
\alpha_n \= - (1 + \rho_n/\beta_n)\sumk k \nk / \nS + \sumk k(k-1)\nSk/\nS.
\end{equation}
Note that
\begin{equation}\label{aR0}
\alpha_n =
(\cR_0-1)
\frac{\rho_n + \beta_n}{\beta_n} \frac{\sum_{k=0}^{\infty} k n_k}{\nS}.
\end{equation}
Our assumptions below imply that
$1\le \frac{\rho_n +  \beta_n}{\beta_n} =O(1)$  
and that $\frac{\sum_{k=0}^{\infty} k n_k}{\nS}$ is bounded and bounded away
from 0 as $n \to \infty$, see \refR{Rrhobeta}.
Hence $(\cR_0-1)\alpha_n^{-1}$ is bounded and bounded away
from 0, and so $\alpha_n$ is equivalent to $\cR_0-1$ as a
measure of distance from criticality; see further \eqref{aR00}.
In particular, we could rephrase our assumptions and results in terms
of $\cR_0-1$ instead of $\alpha_n$, but it seems that the
mathematics works out more cleanly using $\alpha_n$.
Also, we can expect an initial growth
if and only if $\alpha_n>0$.

We consider asymptotics as $n \to \infty$, and all unspecified limits below are as $n \to \infty$. Throughout the paper we use the notation $\op$ in a standard way, as in~\cite{janson2011probabilitynotation}. That is, for a sequence of
random variables $(Y\nn)_1^\infty$ and real numbers
$(a_n)_1^\infty$, `$Y\nn = o_p(a_n)$'  means $Y\nn/a_n \pto 0$.
Similarly, $Y\nn=\Op(1)$ means that, for every $\eps>0$, there exists $K_\eps$
such that $\P(|Y\nn|>K_\eps)<\eps$ for all $n$.
%
%
Given a sequence of events $(A_n)_1^\infty$, $A_n$ is said to hold \whp{}
(with high probability) if $\P(A_n) \to 1$.

\medskip

Our assumptions are as follows. (See also the remarks below.)
Let $\DSn$ denote the degree of a randomly chosen susceptible vertex, so
$\Prob(\DSn = k) = \nSk/\nS$ for each $k \ge 0$.
\begin{enumerate}
\renewcommand{\theenumi}{{\upshape{(D\arabic{enumi})}}}
\renewcommand{\labelenumi}{\theenumi}

\item \label{d:first}\label{d:asympsuscdist}
$\DSn$
converges in distribution to a probability distribution $(p_k)_{k =
  0}^\infty$ with a finite and positive mean $\lambda:=\sumk kp_k$, i.e.
\begin{equation}\label{e:nSktopk}
\frac{\nSk}{\nS} \to p_k, \quad \; k \ge 0.
\end{equation}

\item \label{d:cubeUI}
The third power $\DSn$
is uniformly integrable as $n\to\infty$.  That is,
given $\eps > 0$, there exists $M > 0$ such that, for all $n$,
 \begin{equation}\label{e:cubeUI}
 \sum_{k > M}\frac{k^3\nSk}{\nS} <  \eps.
 \end{equation}

\item \label{d:2ndmoment}
The second moment of the degree of a randomly
  chosen vertex is uniformly bounded, i.e.\
$\sumk k^2\nk = O(n)$.

\smallskip
 \item
\label{d:alphan}
As $n \to \infty$,
\begin{equation}
\label{e:alphanasy}
\alpha_n \to 0 \quad \mathrm{and} \quad \nS\alpha_n^3 \to \infty.
\end{equation}

\smallskip

\item \label{d:initinfective}
The total degree $\sumk k\nIk$ of initially infective vertices satisfies
\begin{equation}\label{emm}
  \sumk k\nIk  =o(n),
\end{equation}
and the limit
\begin{equation}\label{e:HI}
\nu \= \lim_{n \to \infty}\frac{1}{\nS\alpha_n^2}\sumk k\nIk
\in [0,\infty]
\end{equation}
exists  (but may be $0$ or $\infty$).
Furthermore, either $\nu=0$ or
\begin{equation}\label{RdImax}
\dImax:=\max\set{k:\nIk \ge 1}=o\Bigpar{\sumk k\nIk  }.
\end{equation}


\item \label{d:p1orrhopos}
We have $p_0 + p_1 + p_2 < 1$.

\smallskip
\item \label{d:ns}
$\liminf_{\ntoo} n_S/n>0$.
\label{d:last}
\end{enumerate}

\medskip

We will repeatedly use the fact that \ref{d:cubeUI} implies that there
exists a constant $c_0$ such that, for all $n$,
\begin{equation}\label{k3}
  \sumk k^3 \nSk = \nS \E \DSn^3 \le c_0 n.
\end{equation}

\begin{remark}\label{r:UI}
Assumption \ref{d:asympsuscdist} says $\DSn \dto \DSl$, where $\DSl$ has
distribution $(p_k)_{k = 0}^\infty$.
Given \ref{d:asympsuscdist}, assumption \ref{d:cubeUI}  is equivalent to
$\E\DSn^3 \to \E \DSl^3 < \infty$.
Furthermore, \ref{d:cubeUI} implies uniform integrability of $\DSn$ and
$\DSn^2$, so $\E\DSn \to \E \DSl$ and $\E\DSn^2 \to \E \DSl^2$.
Assumptions \ref{d:cubeUI} and \ref{d:ns} further imply that 
\begin{equation}\label{RdSmax}
\dSmax:=\max\set{k:\nSk \ge 1}=o(\nS^{1/3}).
\end{equation}
\end{remark}

Using the notation in \refR{r:UI},
$\gl=\E\DSl$. Furthermore,
let
\begin{align}
\glii &\= \sumk k(k-1)p_k=\E\DSl(\DSl-1),   \label{glii}
\\
\gliii& \= \sumk k(k-1)(k-2)p_k= \E\DSl(\DSl-1)(\DSl-2).   \label{gliii}
\end{align}
Then, the uniform integrability \ref{d:cubeUI} of $\DSn^3$ implies
$\glii,\gliii<\infty$ and furthermore
\begin{align}
\label{glii2}
\glii &= \lim_{n\to\infty}\E\DSn(\DSn-1)
=\lim_\ntoo\sumk k(k-1)\frac{\nSk}{\nS},
\\\label{gliii2}
\gliii &= \lim_{n\to\infty}\E\DSn(\DSn-1)(\DSn-2)
=\lim_\ntoo\sumk k(k-1)(k-2)\frac{\nSk}{\nS}
\end{align}
Also, $\gl,\glii,\gliii > 0$ by~\ref{d:p1orrhopos}.

\medskip

Let $\GrCM=G^* (n, (d_i)_1^n )$ be the
random multigraph with given degree sequence $(d_i)_1^n$ defined by the
{\it configuration model}: we take a set of $d_i$ half-edges for each vertex $i$
and combine half-edges into edges by a uniformly random matching (see \eg{}
\cite{bollobas}). Conditioned on the multigraph being simple, we obtain  $G
= G (n, (d_i)_1^n )$, the uniformly distributed random graph with degree
sequence $(d_i)_1^n$.
The configuration model 
has been used in the study of epidemics in a number of earlier works, see,
for example,
\cite{andersson1998limit,Ball200869,BrittonJansonMartinLof,DDMT12,BohmanPicollelli}.
As in many other papers, including \cite{JansonLuczakWindridgeSIR},
we prove our results for the SIR epidemic on $\GrCM$, and, by conditioning
on $\GrCM$ being simple, we then deduce that these results also hold for the SIR
epidemic on $\Gr$. The results below thus hold for both the random multigraph
$\GrCM$ and the random simple graph $\Gr$.

This argument relies on the probability that
$\GrCM$ is simple
being bounded away from zero as  $n \to \infty$;
by the main theorem of \cite{Janson:2009:PRM:1520305.1520316}
(see also \cite{simpleII})
this occurs
provided condition \ref{d:2ndmoment} holds.
Most of the results below are of the ``\whp'' type
(or can be expressed in this form); then this transfer to the simple graph
case is routine and will not be commented on further. The exception is
\refT{t:takeoff}\ref{takeoff3}, where we obtain a limiting probability
strictly between 0 and 1, and we therefore need a more complicated argument,
see \refS{Spftakeoff}; we also use an extra assumption in this case.

We now state our main result, that, under the conditions above,
the epidemic is either very small, or of a size at least approximatively
proportional to $n\ga_n$ (and thus to $n(\cR_0-1)$).
As just said, the theorem holds for both the multigraph $\GrCM$
and the simple graph $\Gr$.

\begin{theorem}\label{t:criticalSIR}
Suppose that \ref{d:first}--\ref{d:last} hold.

Let $\fs$ be the total number of  susceptible vertices that ever get
infected.
\begin{romenumerate}
\item \label{tsizei}
If\/ $\nu=0$, then there exists a sequence $\eps_n\to0$ such that,
for each $n$, \whp{} one of the following holds.
\begin{enumerate}
 \renewcommand{\labelenumii}{\textup{(\alph{enumii})}}%
 \renewcommand{\theenumii}{\textup{(\alph{enumii})}}%
\makeatletter\renewcommand{\p@enumii}{}\makeatother
\item \label{tsizei:small}
$\fs/\nS \alpha_n < \eps_n $ (the epidemic is small and ends prematurely).
\item \label{tsizei:large}
$\abs{\fs/\nS \alpha_n  -2\gl/\gliii} < \eps_n$
(the epidemic is large and its size is well concentrated).
\end{enumerate}

\item \label{tsizeii}
If\/ $0<\nu<\infty$, then
$\fs/\nS \alpha_n  \pto \gl\xxl$.

\item \label{tsizeiii}
If\/ $\nu=\infty$, then
\begin{equation}
  \frac{\fs}{\bigpar{\nS\sumk k\nIk}\qq} \pto
\frac{\sqrt2\,\gl}{\sqrt{\gliii}}
\end{equation}
\end{romenumerate}

Moreover, in cases \ref{tsizei}\ref{tsizei:large}, \ref{tsizeii} and
\ref{tsizeiii}, the following holds.
Let $\fs_k$ be the number of degree $k\ge 0$ susceptible vertices that
ever get infected. Then
\begin{align}
\sumk \biggabs{\frac{\fs_k}{\fs} &- \frac{kp_k}{\gl}  }
\pto 0.
\label{t2ab}
\end{align}
\end{theorem}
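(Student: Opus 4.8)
The plan is to track the epidemic simultaneously with the construction of the configuration-model multigraph, exposing half-edges one at a time as they become infective, in the spirit of~\cite{JansonLuczakWindridgeSIR} and~\cite{JansonLuczakGiantCpt}. First I would introduce a time-changed version of the process in which the clock runs so that exactly one susceptible half-edge is infected per unit time (the natural ``half-edge'' clock); this makes the driving quantity the number of free infective half-edges, call it $A_t$, whose increments are governed by which susceptible vertex the newly infected half-edge belongs to (a size-biased pick from the remaining susceptibles) together with the geometric number of further infective half-edges produced via the recovery/infection competition $\frac{\beta_n}{\rho_n+\beta_n}$. The expected drift of $A_t$ at time $t$ is, to leading order, $\alpha_n - (\text{const})\, t/\nS$ once one linearises $\sum_k k(k-1)\nSk^{(t)}/\nS^{(t)}$ around its initial value using \ref{d:cubeUI}; this is where the quadratic self-damping term producing the $2\gl/\gliii$ comes from. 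The fluctuations of $A_t$ are of order $t\qq$, and assumptions \ref{d:alphan} ($\nS\alpha_n^3\to\infty$) and \ref{d:cubeUI} are exactly what is needed to make the drift dominate the noise on the relevant scale.

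The core step is a coupling of $(A_t)$ with a Brownian motion with parabolic drift: writing $A_t$ as martingale plus compensator, the compensator is $\nS\alpha_n t - \tfrac{c}{2}t^2/\nS + (\text{error})$ for an explicit constant $c$ built from $\gl,\gliii$ and $\rho_n/\beta_n$, and the martingale part has quadratic variation $\asymp t$, so after rescaling $t = \nS\alpha_n s$ and $A = \nS\alpha_n^2\, a$ one gets, in the limit, $a_s = \nu + W_s + s - \tfrac12 \kappa s^2$ (with $\kappa$ determined by the constants above; the $\nu$ enters through \eqref{e:HI} and is the initial value via \ref{d:initinfective}, using \eqref{RdImax} to control the degree of the initial infectives). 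The epidemic stops when $A_t$ first hits $0$; the stopping time $\tau$ of this limiting SDE determines the final size through $\fs = $ (number of susceptible half-edges consumed) $/\gl$ up to lower order, since each infected susceptible of degree $k$ contributes $k$ half-edges and the empirical degree distribution of infected susceptibles is, by the size-biased exposure, $\propto k p_k$ — which is precisely the content of \eqref{t2ab}. In case \ref{tsizei} ($\nu=0$) the limiting process starts at $0$ with positive drift, so it either returns to $0$ almost immediately (the ``small'' alternative, giving $\fs = o_p(\nS\alpha_n)$) or escapes and runs until the parabolic drift brings it back down at $s\approx 2/\kappa$, giving $\fs/\nS\alpha_n \to \tfrac12\cdot\tfrac{2}{\kappa}\cdot(\text{half-edge-to-vertex factor})= 2\gl/\gliii$; in case \ref{tsizeii} ($0<\nu<\infty$) the starting height is positive and $\tau$ is a.s.\ a fixed deterministic limit $\xxl$ times $\gl$ after accounting for constants; in case \ref{tsizeiii} ($\nu=\infty$) the initial term dominates the drift and one rescales instead by $(\nS\sum_k k\nIk)\qq$, recovering $\sqrt2\,\gl/\sqrt{\gliii}$.

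I would organise the write-up as: (i) define the time-changed process and establish concentration of the key empirical sums $\sum_k k\nSk^{(t)}$, $\sum_k k(k-1)\nSk^{(t)}$, $\sum_k k(k-1)(k-2)\nSk^{(t)}$ over the relevant time range, using \ref{d:cubeUI}, \ref{d:2ndmoment}, \eqref{k3} and \eqref{RdSmax} together with martingale (Doob/Freedman) inequalities; (ii) derive the drift/variance expansion and set up the Brownian coupling via a Skorokhod-type embedding or a direct functional CLT for the martingale part; (iii) analyse the hitting time of the limit process $\nu + W_s + s - \tfrac12\kappa s^2$ in each of the three $\nu$-regimes; (iv) translate the half-edge count back to $\fs$ and prove \eqref{t2ab} by a law-of-large-numbers argument for the degree types consumed. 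The main obstacle is step (i)–(ii): obtaining concentration of the degree-moment sums \emph{uniformly} over a time window of length $\asymp \nS\alpha_n$ that is itself only $o(\nS)$, with errors small enough ($o(\nS\alpha_n^2)$ on the drift) that they do not swamp the signal — this is delicate precisely because $\alpha_n\to 0$, and it is where the uniform integrability of the cube \ref{d:cubeUi} is used in an essential, quantitative way rather than merely to pass to a limit. The three-regime hitting-time analysis in step (iii) is then comparatively soft, being a question about an explicit one-dimensional diffusion.
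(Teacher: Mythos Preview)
Your broad strategy --- time-change the epidemic so that susceptible half-edges are consumed at a constant rate, track the number $A_t$ of free infective half-edges, establish concentration of the degree-moment sums, and read off the hitting time of zero --- is exactly what the paper does. The paper's continuous time change (each free susceptible half-edge infected at rate~$1$) differs only cosmetically from your ``one per unit time'' clock, and the concentration step you describe (your step (i)) corresponds to the paper's \refT{t:conc}, proved via Bernstein/Doob/BDG just as you outline.

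The one substantive discrepancy is your step (ii)--(iii): you propose a functional CLT coupling $A_t$ to a Brownian motion with parabolic drift, landing on the limit $a_s=\nu+W_s+s-\tfrac12\kappa s^2$. Under \ref{d:alphan} this limit cannot contain a Brownian term. You yourself note that the martingale part has quadratic variation $\asymp t$; at $t=\nS\alpha_n s$ this is $O((\nS\alpha_n)^{1/2})$, and after dividing by $\nS\alpha_n^2$ you get $O((\nS\alpha_n^3)^{-1/2})\to0$. So $W_s\equiv0$ and the rescaled limit is the \emph{deterministic} parabola $f(s)=\nu+s-\tfrac{\gliii}{2}s^2$ (or $1-\tfrac{\gliii}{2}s^2$ after a different rescaling when $\nu=\infty$); this is the paper's \refL{LXI}. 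The dichotomy in case \ref{tsizei} then comes simply from $f$ having \emph{two} zeros at $0$ and $2/\gliii$ when $\nu=0$, so the concentration forces $\iEnd/\alpha_n$ near one of them (\refL{l:iEnd}\ref{liend0}); no excursion-theoretic argument about a Brownian motion ``escaping or returning immediately'' is needed, nor would it apply. Your Brownian picture is the critical-window heuristic ($\alpha_n\asymp n^{-1/3}$), which \ref{d:alphan} explicitly excludes; interestingly, a genuine Brownian limit \emph{does} appear in the paper, but only in the proof of \refT{t:takeoff}\ref{takeoff3}, on the much shorter time scale $t=O(\alpha_n^{-2})\ll \nS\alpha_n$, where it determines the \emph{probability} of a large outbreak rather than its size. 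With that correction --- drop $W_s$, work with the deterministic $f$ --- your plan goes through and matches the paper.
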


Thus, \eqref{t2ab} says that, except in the case
\ref{tsizei}\ref{tsizei:small},
the total variation distance between the degree distribution $(\fs_k/\fs)$
of the vertices that get infected and the size-biased
distribution $(kp_k/\gl)$ converges to 0 in probability.

Note that case \ref{tsizei} of \refT{t:criticalSIR} says that, for a range
of initial values of the number of infective half-edges (viz.\ when $\nu=0$),
if the epidemic takes off at all, then it has approximately the size
$(2\gl/\gliii)\nS\ga_n$.
Hence, in this range, the size of the epidemic does (to the first order) not
depend on the initial number of infective half-edges (only the probability
of a large outbreak does), so this can be seen as
the ``natural'' size of an epidemic.
This also means that in this range, most of the outbreak can be traced back
to a single initial infective half-edge.

However, when the initial number of infective half-edges number gets larger, the many small outbreaks coming from the different initially infective half-edges will add up to
a substantial outbreak. So there is a threshold where this bulk of combined
small outbreaks is of about the same size as the ``natural'' size of a large
outbreak. The value $\nu$ is, in the limit as $n \to \infty$, the ratio of the initial number divided by this threshold, so it shows, roughly, whether the combined small outbreaks
give a large contribution to the outbreak or not.
Our theorem then shows that, if the initial number of infective
half-edges is larger (to be precise, $\nu>0$), then they force a larger
outbreak, with a size that is proportional to the square root of the initial
number of infective half-edges in the range $\nu=\infty$.
(For $0<\nu<\infty$, there is a smooth transition between the two extremal
cases.)

The following result gives conditions for the occurrence of a large epidemic
in \refT{t:criticalSIR}\ref{tsizei}. In anticipation of later notation,
let $\XI0:= \sumk k \nIk$ be the total degree of
initially infective vertices (i.e.\ the total number of initially infective
half-edges).

\begin{theorem}\label{t:takeoff}
Suppose that the assumptions of \refT{t:criticalSIR} are
satisfied with $\nu=0$.
\begin{romenumerate}
\item \label{takeoff1}
If $\alpha_n  \XI0 \to 0$, then
$\fs = \op(\alpha_n\qww)= \op(\nS \alpha_n)$, and thus
case \ref{tsizei}\ref{tsizei:small}
in \refT{t:criticalSIR}
occurs \whp.

\item \label{takeoff2}
If $\alpha_n \XI0 \to \infty$ then case \ref{tsizei}\ref{tsizei:large}
in \refT{t:criticalSIR}
occurs \whp.

\item \label{takeoff3}
Suppose that $\alpha_n \XI0$ is bounded above and below.
In the simple graph case, assume also that
$\sum_{k\ge1} k^2 \nIk=o(n)$
and
$\sum_{k\ge\gan\qw} k^2 \nRk=o(n)$.
Then both
cases \ref{tsizei}\ref{tsizei:small} and \ref{tsizei}\ref{tsizei:large}
in \refT{t:criticalSIR}
occur with probabilities bounded away from $0$ and $1$.
Furthermore, if $\dImax=o(\XI0)$, then the probability that
case \ref{tsizei}\ref{tsizei:small} in \refT{t:criticalSIR} occurs is
\begin{equation}\label{takeoff}
 \exp\Bigpar{-\frac{\glii+\gl +  \sumk k \nRk/\nS}{\glii\gliii} \ga_n \XI0}
+o(1).
\end{equation}
Moreover, in the case the epidemic is small, $\fs=\Op\bigpar{\gan\qww}$.
\end{romenumerate}
\end{theorem}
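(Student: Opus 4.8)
The plan is to read off both parts of the theorem from the behaviour of the random walk that, in \refS{s:proof}, is coupled to the time-changed total infective degree. Recall from there that the epidemic is explored one infective half-edge at a time alongside the configuration model, and that throughout an \emph{early phase} --- lasting until of order $n\alpha_n$ half-edges have been paired --- the time-changed total infective degree is closely approximated by a random walk $W$ started at $\XI0$ with increments of mean $(1+o(1))\alpha_n$ and variance $(1+o(1))\sigma_n^2$. Since only $o(n)$ half-edges change class during the early phase, the not-yet-reached susceptible half-edges keep, to within $o(1)$, their initial size-biased degree distribution, and a bookkeeping of the pairing and recovery rates gives $\sigma_n^2 = 2/c_n + o(1)$, where $c_n := (\glii+\gl+\sumk k\nRk/\nS)/(\glii\gliii)$ is the constant in \eqref{takeoff}; equivalently $2\alpha_n/\sigma_n^2 = (1+o(1))c_n\alpha_n$. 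By \ref{d:alphan} the early phase lasts $\gg\alpha_n^{-2}$ steps --- the timescale on which $W$ either hits $0$ or escapes (once $W$ reaches a height $\gg\alpha_n^{-1}$ it does not return to $0$ \whp) --- so for leading-order purposes $W$ may be treated as a walk with this constant drift and variance. By the analysis of \refS{s:proof}, if $W$ survives the early phase then the epidemic thereafter follows its fluid limit and produces \whp{} case \ref{tsizei}\ref{tsizei:large} of \refT{t:criticalSIR}, whereas if $W$ hits $0$ inside the early phase it does so after $\Op(\XI0/\alpha_n + \alpha_n^{-2}) = \op(\nS\alpha_n)$ steps (using $\nu = 0$, i.e.\ $\XI0 = o(\nS\alpha_n^2)$, together with \ref{d:alphan}), so $\fs = \op(\nS\alpha_n)$ and case \ref{tsizei}\ref{tsizei:small} holds. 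Combined with the dichotomy already in \refT{t:criticalSIR}\ref{tsizei}, this reduces the whole theorem to estimating the probability that $W$ hits $0$ before leaving the early phase, with $\fs$ on that event at most the number of pairing steps, which is of order this hitting time.

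For part \ref{takeoff1}, $\alpha_n\XI0\to0$ means $\XI0 \ll \alpha_n^{-1}$, and comparing $W$ with a driftless walk shows that it hits $0$ before reaching height $\alpha_n^{-1}$ \whp{} and in $\op(\alpha_n^{-2})$ steps (a driftless walk started below $\alpha_n^{-1}$ exits $(0,\alpha_n^{-1})$ in expected time $O(\XI0\alpha_n^{-1}) = o(\alpha_n^{-2})$); hence $\fs = \op(\alpha_n^{-2}) = \op(\nS\alpha_n)$ by \ref{d:alphan}, giving case \ref{tsizei}\ref{tsizei:small} \whp. For part \ref{takeoff2}, $\alpha_n\XI0\to\infty$, and the classical gambler's-ruin estimate gives $\P(W\text{ ever hits }0) \le \exp\bigpar{-(1+o(1))\,2\alpha_n\XI0/\sigma_n^2} \to 0$, so $W$ survives the early phase \whp{} and case \ref{tsizei}\ref{tsizei:large} holds \whp.

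For part \ref{takeoff3}, a routine subsequence argument reduces to the case $\alpha_n\XI0 \to a \in (0,\infty)$ and $c_n \to c$. Rescaling space by $\alpha_n$ and time by $\alpha_n^{-2}$, the walk converges in distribution --- by a functional central limit theorem, the tightness being supplied by the concentration estimates of \refS{s:proof} --- to a Brownian motion started at $a$ with drift $1$ and variance rate $2/c$; and, since the early phase corresponds in the rescaled time to an interval $[0,T_n]$ with $T_n\to\infty$, the probability that $W$ hits $0$ before leaving the early phase converges to the probability that this Brownian motion ever hits $0$, namely $\exp(-ca)$. As $\exp(-c_n\alpha_n\XI0)\to\exp(-ca)$ along the subsequence and every subsequence has such a sub-subsequence, we obtain $\P(\text{the epidemic is small})-\exp(-c_n\alpha_n\XI0)\to0$, which is \eqref{takeoff}; in particular both cases in \refT{t:criticalSIR}\ref{tsizei} occur with probability bounded away from $0$ and $1$, and on the small-epidemic event $W$ reaches $0$ at an $\Op(1)$ rescaled time, so $\fs = \Op(\alpha_n^{-2})$. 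The hypothesis $\dImax = o(\XI0)$ enters exactly to guarantee that $W$ genuinely starts at height $\XI0$: it forbids a single initially infective vertex from carrying a macroscopic share of $\XI0$, a vertex which --- by a direct computation --- would recover before paying out most of its half-edges and so bias the outcome towards extinction. All of the above is proved for $\GrCM$; the passage to the simple graph $\Gr$ is routine for the \whp{} statements \ref{takeoff1} and \ref{takeoff2}, but part \ref{takeoff3} gives a limiting probability in $(0,1)$, and there the extra hypotheses $\sum_{k\ge1}k^2\nIk = o(n)$ and $\sum_{k\ge\alpha_n^{-1}}k^2\nRk = o(n)$ are used to show that, \whp, the $\Op(\alpha_n^{-2}) = o(n)$ edges revealed during the early phase contain no loop or repeated edge (automatic for susceptible half-edges by \ref{d:cubeUI}), so that conditioning on $\GrCM$ being simple leaves the small-epidemic event asymptotically unaffected.

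The main obstacle is the precision required of the coupling from \refS{s:proof}: to pin down the constant in \eqref{takeoff}, the increment mean must equal $\alpha_n$ up to a relative error that is $o(1)$ \emph{uniformly} over the entire early phase and genuinely vanishing, since it enters the $O(1)$ exponent $2\alpha_n\XI0/\sigma_n^2$ multiplicatively, and likewise for the increment variance; this is where the concentration-of-measure estimates of \refS{s:proof}, for $\sumk k\nSk$, $\sumk k\nRk$ and the evolving susceptible-degree profile, are needed at full strength. Subsidiary points are the treatment of recovery events in the variance computation, the clean-start issue handled by $\dImax = o(\XI0)$, and the asymptotic independence of the small-epidemic event from $\{\GrCM\text{ simple}\}$.
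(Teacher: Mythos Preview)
Your broad strategy—reduce the dichotomy to a near-critical random walk and then invoke gambler's-ruin/Donsker together with the Brownian hitting-probability formula $\exp(-2\mu x/\gss)$—is exactly what the paper does, and your handling of parts \ref{takeoff1}, \ref{takeoff2} and the simple-graph transfer is correct in outline. But there is a genuine gap at the foundation.

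You base everything on a random-walk approximation for the total infective degree $\XI{t}$ itself, and you attribute this to \refS{s:proof}. That section proves only concentration of $\XI{t}$ around the deterministic $\fXIn(t)$ on the scale $\op(n\gan^2)$; it contains no random-walk coupling, and the scale is far too coarse for the $O(\gan\qw)$ fluctuations that decide extinction. Producing such a coupling for $\XI{t}$ directly is genuinely hard, because recoveries do not fit into a one-step-per-pairing walk: between pairing events, infective vertices recover, and each recovery drops $\XI{t}$ by the current free degree of the recovering vertex. That quantity depends on the full degree profile of the currently infective population, which is neither tracked nor controlled; in particular your increment variance $\sigma_n^2=2/c_n$ cannot be read off without it. Your remark that $\dImax=o(\XI0)$ ``guarantees that $W$ genuinely starts at height $\XI0$'' is a symptom of the same issue: $\XI0$ is deterministic, so $W$ always starts there; what actually fails when one initial vertex carries most of $\XI0$ is that its recovery produces a single non-Gaussian jump in $\XI{t}$, destroying the diffusion approximation—precisely the recovery-fluctuation problem you have not addressed.

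The paper circumvents this with a device you do not use: at time $0$ and at each infection it pre-samples the vertex's $\Exp(\rho_n)$ recovery time and colours \emph{red} those of its half-edges whose $\Exp(\gb_n)$ pairing clocks ring first (the rest are \emph{black} and henceforth inert). The count $\Zm$ of red free half-edges after $m$ pairings then changes \emph{only} at pairing events, and its increments are, up to the slowly depleting susceptible pool, i.i.d.\ copies of $Y(D)-1$ with $D$ the size-biased susceptible degree and $Y(k)$ beta-binomial; this is what makes Donsker rigorous (via an explicit i.i.d.\ sandwich $\bar Z^-_m\le\Zm\le\bar Z^+_m$) and the variance a one-line moment computation, $\gss=2\gl\gliii/(\glii(\glii+\gl))$. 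The starting height is now the \emph{random} $Z_0\approx\pi_n\XI0$ with $\pi_n\to\gl/\glii$, and it is here—not at the drift or the start of $\XI{t}$—that $\dImax=o(\XI0)$ enters, via a separate concentration lemma for $Z_0$. The drift of $\Zm$ is $\glii\qw\gan$ rather than $\gan$, but the combination $2\cdot\text{drift}\cdot Z_0/\gss$ reproduces the exponent in \eqref{takeoff}. Without the colouring (or an equivalent way to tame recovery fluctuations), your increments are not close enough to i.i.d.\ for the argument to go through.
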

Note that $\sumk k \nRk/\nS$ in \eqref{takeoff} is bounded because of
\ref{d:2ndmoment} and \ref{d:ns},
and that
\eqref{takeoff} holds in cases \ref{takeoff1} and
\ref{takeoff2} too.
A more complicated formula extending \eqref{takeoff} holds also in the case
when the condition
$\dImax=o(\XI0)$ fails, see \eqref{takeoffDeluxe} in \refR{Rtakeoff}.

\begin{remark}
The quantity $\nu\ge0$ controls the initial number of infective contacts.
If $\nu > 0$,
so a large epidemic occurs by \refT{t:criticalSIR},
then
\[
\gan\XI0=
\alpha_n \sumk k \nIk = (\nS \alpha_n^3) \frac{\sumk k \nIk}{\nS \alpha_n^2} \to \infty,
\]
by \eqref{e:alphanasy} and \eqref{e:HI};
hence the condition in \refT{t:takeoff}\ref{takeoff2} holds automatically
when $\nu>0$.
\end{remark}

\begin{remark}
  The condition \eqref{emm} that the total degree of initally infective vertices
  is $o(n)$ is, by \ref{d:2ndmoment} and the Cauchy--Schwarz inequality,
  equivalent to $n_I=o(n)$, at least if we ignore isolated infective vertices.
Note that the opposite case, when $n_I/n$ has a strictly positive limit, is
treated in \cite[Theorems 2.6 and 2.7]{JansonLuczakWindridgeSIR} (under otherwise similar
assumptions).
\end{remark}

\begin{remark}
  The assumption \eqref{RdImax} (which is required only when $\nu>0$) says
  that no single infective vertex has a significant fraction of the total
  infective degree.
\end{remark}

\begin{remark}\label{Rrhobeta}
Assuming \ref{d:asympsuscdist} and \ref{d:cubeUI},
the assumption $\gan\to0$ in \ref{d:alphan} is equivalent to $\cR_0\to1$, as
said above.
To see this, note that
\ref{d:asympsuscdist} and \ref{d:cubeUI} imply
(see \refR{r:UI})
\begin{equation}\label{lha}
  \sumk k n_k/\nS \ge
  \sumk k\nSk/\nS
= \E \DSn  \to \E\DSl=\gl>0.
\end{equation}
If $\gan\to0$, then \eqref{lha} and \eqref{aR0} imply that $\cR_0\to1$.

Conversely, still assuming \ref{d:asympsuscdist} and \ref{d:cubeUI},
if $\cR_0\to1$, then it follows easily from \eqref{R0} that
\begin{equation}
  \label{rhobeta}
\rho_n/\beta_n =O(1),
\end{equation}
and also that
\begin{equation}
  \label{lhb}
\sumk kn_k = O\Bigpar{\sumk (k-1)k\nSk} = O(\nS).
\end{equation}
Hence, \eqref{aR0} implies that $\gan\to0$.

To be precise, \eqref{aR0} and \eqref{R0} yield by \eqref{glii2} and
$\cR_0\to1$,
\begin{equation}\label{aR00}
\alpha_n =
\frac{\cR_0-1}{\cR_0}
\frac{\sum_{k=0}^{\infty}(k-1) k \nSk}{\nS}
=\bigpar{1+o(1)}\glii(\cR_0-1).
\end{equation}

Note that by combining the two parts of the argument,
we have shown that our assumptions
\ref{d:asympsuscdist}, \ref{d:cubeUI} and \ref{d:alphan} imply
\eqref{rhobeta}
and the complementary bounds \eqref{lha} and \eqref{lhb}.
(This can also easily be seen using \eqref{e:alphan}.)
\end{remark}

\begin{remark}
We saw in \refR{Rrhobeta} that
\ref{d:asympsuscdist}, \ref{d:cubeUI} and \ref{d:alphan} imply
\eqref{lhb}.
Since $n-n_0\le\sumk kn_k$, it follows that
$n-n_0=O(\nS)$.
Hence,
assumption \ref{d:ns} is needed only to the exclude the rather trivial
case that almost all
of the population consist of isolated infective vertices, which cannot
spread the epidemic.
Note also that \ref{d:ns} implies that it does not matter whether we use
$\nS$ or $n$ in estimates such as
\eqref{RdSmax}.
\end{remark}

\subsection{$G(n,p)$ and $G(n,m)$}

The results above apply to the graphs $\gnp$ and $\gnm$
by conditioning on the sequence of vertex degrees (which are now random),
since given the  vertex degrees, both
$\gnp$ and $\gnm$ are uniformly distributed over all (simple) graphs with
these vertex degrees. Moreover, if \ntoo{} and $p\sim\gl/n$, or $m\sim n\gl/2$,
for some $\gl>0$, then the degree distribution is asymptotically Poisson
$\Po(\gl)$. For \gnp, this leads to the following result.

\begin{corollary}
Suppose that $\beta_n > 0$ and $\rho_n \ge 0$ for each $n \in {\mathbb
  N}$. Let $\lambda \ge 1$, and assume that $\frac{\beta_n +
  \rho_n}{\beta_n} \to \lambda$ as $n \to \infty$. Let $\eta_n \to 0$, and
consider the SIR epidemic on the random graph $G(n, \frac{\lambda (1+
  \eta_n)}{n})$ with infection rate $\beta_n$ and recovery rate $\rho_n$.
Suppose that there are $\nI= o(n)$ initially infective vertices chosen at
random, and all the other vertices are susceptible.
Let
\begin{equation}
\gamma_n
:= 1 -\frac{\beta_n + \rho_n}{\lambda \beta_n}+\eta_n
- (1+\eta_n)\frac{\nI}{n}.
\end{equation}
Then $\gamma_n \to 0$. Assume that $n \gamma_n^3 \to \infty$,
and that $\mu = \lim \frac{\nI}{n \gamma^2_n}$ exists.
\begin{romenumerate}
\item \label{ERtsizei-1}
If\/ $\mu=0$, then there exists a sequence $\eps_n\to0$ such that
for each $n$, \whp{} one of the following holds.
\begin{enumerate}
 \renewcommand{\labelenumii}{\textup{(\alph{enumii})}}%
 \renewcommand{\theenumii}{\textup{(\alph{enumii})}}%
\makeatletter\renewcommand{\p@enumii}{}\makeatother
\item \label{ERtsizei:small-1}
$\fs/(n \gamma_n) < \eps_n $.
\item \label{ERtsizei:large-1}
$\abs{\fs/(n \gamma_n) -2} < \eps_n$.
\end{enumerate}
Moreover, the probability that \ref{ERtsizei:small-1} holds
is
\begin{equation}\label{takeoffgnp}
  \exp\bigpar{-(1+\gl\qw)\gam_n n_I} +o(1).
\end{equation}
In particular, \ref{ERtsizei:small-1} holds \whp{} if\/ $\gam_n\nI\to0$ and
\ref{ERtsizei:large-1} holds \whp{} if\/ $\gam_n\nI\to\infty$.
\item \label{tsizeii-1}
If\/ $0<\mu <\infty$, then
$\fs/n \gamma_n  \pto 1 + \sqrt{1 + 2\mu }$.

\item If $\mu = \infty$, then
$$\frac{\fs}{(\nS \nI)^{1/2}} \pto \sqrt{2}.$$
\end{romenumerate}
The same holds for $\gnm$ with $m=n\gl(1+\eta_n)/2$.
\end{corollary}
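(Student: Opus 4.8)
The plan is to derive this corollary (the statement about $G(n,p)$ and $G(n,m)$) from Theorems~\ref{t:criticalSIR} and~\ref{t:takeoff} by conditioning on the random degree sequence. Recall that, given its degree sequence, $G(n,\lambda(1+\eta_n)/n)$ is uniformly distributed among the simple graphs with that degree sequence, and likewise $G(n,m)$ with $m=n\lambda(1+\eta_n)/2$; also the $\nI$ initially infective vertices form a uniformly random subset, independent of the degrees. It therefore suffices to show that, whp, the degree sequence together with the susceptible/infective labelling lies in the set of configurations to which the two theorems apply, with their limiting parameters identified in terms of $\lambda$, $\gamma_n$ and $\mu$; since the relevant conclusions are of ``whp'' or in-probability type, they transfer to the unconditional statement (passing to subsequences along which the random data converges suitably, if one wishes to be pedantic). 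The one delicate point is the probability formula in \refT{t:takeoff}\ref{takeoff3}: there one must additionally verify the simple-graph hypotheses $\sum_{k\ge1}k^2\nIk=o(n)$ (true whp, since $\E\sum_{k\ge1}k^2\nIk=O(\nI)=o(n)$) and $\sum_{k\ge\gan\qw}k^2\nRk=o(n)$ (trivial, there being no initially recovered vertices), and then average the conditional probability over the degree sequence, the ``bad'' sequences having total probability $o(1)$.

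Next I would check the regularity hypotheses. In $G(n,\lambda(1+\eta_n)/n)$ each degree is $\Bin(n-1,\lambda(1+\eta_n)/n)$, asymptotically $\Po(\lambda)$; since the infective set is uniformly random, the empirical susceptible degree distribution converges whp as in \eqref{e:nSktopk} to $p_k=e^{-\lambda}\lambda^k/k!$, giving \ref{d:asympsuscdist} with $\gl=\lambda$. The fourth factorial moment of $\Bin(n-1,\lambda(1+\eta_n)/n)$ is $(n-1)\fall{4}(\lambda(1+\eta_n)/n)^4=O(1)$, so $\E\DSn^4=O(1)$ uniformly; a Markov/truncation argument then yields uniform integrability of $\DSn^3$ (\ref{d:cubeUI}) and $\sum_k k^2\nk=O(n)$ (\ref{d:2ndmoment}), whence $\glii=\lambda^2$ and $\gliii=\lambda^3$ by \eqref{glii2}--\eqref{gliii2}. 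Finally \ref{d:p1orrhopos} holds since $\Po(\lambda)$ is not supported on $\{0,1,2\}$, and \ref{d:ns} holds since $\nS=n-\nI\sim n$. Also $\gamma_n\to0$ is immediate from its definition, as each of its three terms does.

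The crux is to pin down $\gan$ and $\nu$. Writing $p=\lambda(1+\eta_n)/n$ and averaging over both the degrees and the random susceptible/infective split, one finds (recalling that $\nS\gan$ is the bracket by \eqref{e:alphan})
\begin{equation*}
\E\Bigsqpar{\sum_k k(k-1)\nSk-\tfrac{\beta_n+\rho_n}{\beta_n}\sum_k k\nk}
=(n-1)p\Bigpar{(n-\nI)(n-2)p-\tfrac{\beta_n+\rho_n}{\beta_n}n}
=n\lambda^2(1+\eta_n)\gamma_n+O(1),
\end{equation*}
the algebra collapsing precisely because the term $-(1+\eta_n)\nI/n$ in the definition of $\gamma_n$ accounts for the degree lost to the infective set; dividing by $\nS$ gives $\E\gan=(1+o(1))\lambda^2\gamma_n$. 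A variance bound of order $O(n)$ for the bracketed quantity (the degrees have pairwise covariance $O(1/n)$, and the split contributes a hypergeometric fluctuation of variance $O(\nI)$) then gives $\gan=(1+\op(1))\lambda^2\gamma_n$; the point is that the resulting $\Op(n^{-1/2})$ fluctuation of $\gan$ is $\op(\gamma_n)$, because the standing assumption $\nS\gamma_n^3\to\infty$ forces $\gamma_n\gg n^{-1/3}\gg n^{-1/2}$. In particular $\gan\to0$ and $\nS\gan^3\to\infty$, so \ref{d:alphan} holds. Similarly $\XI0=\sum_k k\nIk=(1+\op(1))\lambda\nI$ whenever $\nI\to\infty$, so $\nu=\lim\XI0/(\nS\gan^2)=\lim\lambda\nI/(n\lambda^4\gamma_n^2)=\mu/\lambda^3$ exists (and $\nu=\mu=0$ if $\nI$ is bounded); the condition \eqref{RdImax}, needed only when $\nu>0$, holds whp, since then $\nI\to\infty$ and $\dImax=o(\nI)=o(\XI0)$, being the maximum of that many near-$\Po(\lambda)$ variables. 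Thus \ref{d:initinfective} holds and the verification is complete.

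It remains to substitute. Since $\nS\gan=(1+\op(1))n\lambda^2\gamma_n$, $\gliii=\lambda^3$ and $\nu\gliii=\mu$, the quantities appearing in \refT{t:criticalSIR} become $2\gl/\gliii=2/\lambda^2$, $\gl\xxl=(1+\sqrt{1+2\mu})/\lambda^2$ and $\sqrt2\,\gl/\sqrt{\gliii}=\sqrt2/\sqrt\lambda$, while $(\nS\sum_k k\nIk)^{1/2}=(1+\op(1))\sqrt\lambda\,(\nS\nI)^{1/2}$; hence cases \ref{tsizei}, \ref{tsizeii}, \ref{tsizeiii} of \refT{t:criticalSIR} turn respectively into case \ref{ERtsizei-1}, case \ref{tsizeii-1}, and the $\mu=\infty$ case of the corollary, and \refT{t:takeoff}\ref{takeoff1}--\ref{takeoff3} yield the dichotomy in case \ref{ERtsizei-1}. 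Finally, in \eqref{takeoff} we have $\sum_k k\nRk=0$, $\glii+\gl=\lambda^2+\lambda$ and $\glii\gliii=\lambda^5$, so, using $\gan\XI0=(1+o(1))\lambda^3\gamma_n\nI$, the probability of a small epidemic is
\[
\exp\Bigpar{-\tfrac{\lambda^2+\lambda}{\lambda^5}\gan\XI0}+o(1)
=\exp\bigpar{-(1+\gl\qw)\gamma_n\nI}+o(1),
\]
which is \eqref{takeoffgnp}. The $G(n,m)$ statement follows in exactly the same way, its degree sequence having the same asymptotics. The main obstacle is the step $\gan=(1+\op(1))\lambda^2\gamma_n$: one must confirm that the $O(1/n)$ binomial-versus-Poisson corrections and the $\Op(n^{-1/2})$ degree and sampling fluctuations are all negligible against the vanishing sequence $\gamma_n$, which is precisely where the hypothesis $\nS\gamma_n^3\to\infty$ enters; the remainder is routine $G(n,p)$ degree-sequence regularity and careful bookkeeping of the constants $\gl$, $\glii$, $\gliii$.
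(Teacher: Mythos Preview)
Your proof is correct and follows essentially the same route as the paper: condition on the (random) degree sequence, verify \ref{d:first}--\ref{d:last} with limiting distribution $\Po(\lambda)$ so that $\glii=\lambda^2$ and $\gliii=\lambda^3$, establish $\gan=(1+\op(1))\lambda^2\gamma_n$ and $\XI0=(1+\op(1))\lambda\nI$ whence $\nu=\mu/\lambda^3$, and then read off the conclusions from Theorems~\ref{t:criticalSIR} and~\ref{t:takeoff}. The paper handles the passage from random to deterministic degree sequences via the Skorohod coupling theorem rather than your subsequence remark, and computes $\gan$ by a direct $O_p(n^{-1/2})$ estimate rather than your mean-plus-variance argument, but these are equivalent; your write-up is in fact slightly more explicit than the paper's in checking the extra simple-graph hypotheses of \refT{t:takeoff}\ref{takeoff3} and in carrying out the final substitution that yields~\eqref{takeoffgnp}.
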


\begin{proof}
As said above, we condition on the vertex degrees.
We have $\nSk/\nS \pto p_k:=\P(\Po(\gl)=k)$ for every $k$;
for convenience, we use the \Skorohod{} coupling theorem
\cite[Theorem 4.30]{kallenberg}
so we may assume that
this holds \as{} for each $k$; thus \eqref{e:nSktopk} holds a.s.
Similarly we may assume that $\sum_k k^4\nk/n$ converges \as{},
and then
\ref{d:cubeUI} and \ref{d:2ndmoment} hold a.s.
Furthermore, $\gan$ is now random, and it is easy to see from
\eqref{e:alphan} that
\begin{equation}
  \begin{split}
\frac{\nS}{n}\gan&
=-\frac{\gb_n+\rho_n}{\gb_n}(1+\eta_n)\gl
+\bigpar{(1+\eta_n)\gl}^2\Bigpar{1-\frac{\nI}n}+O_p\bigpar{n\qqw}
\\&
=(1+\eta_n)\gl^2 \gam_n+O_p\bigpar{n\qqw}
=\bigpar{\gl^2+\op(1)}\gam_n.
  \end{split}
\end{equation}
Repeating the \Skorohod{} trick, we may thus assume also that
$\ga_n/\gam_n\to\gl^2$. Similarly we may assume
$\XI0=\sum_k k\nIk=\bigpar{1+O(\nI\qqw)}\gl{\nI}$, and then \eqref{e:HI} holds
with
$\nu=\mu/\lambda^3$; it is also easy to see that \eqref{RdImax} may be
assumed. Then all the conditions \ref{d:first}--\ref{d:last} hold \as,
and the result follows as a consequence of Theorems \ref{t:criticalSIR} and
\ref{t:takeoff},
noting that $\DSl\sim\Po(\gl)$, and thus $\glii=\gl^2$ and $\gliii=\gl^3$.
\end{proof}

\section{Proof of \refT{t:criticalSIR} }\label{s:proof}

\subsection{Simplifying assumptions}\label{s:simplify}

We assume for convenience that $\nI = o(n)$.
In fact, we may assume that $\nIx0=0$ by deleting all initially infective
vertices  of degree 0, since these are irrelevant; then $\nI=o(n)$ as a
consequence of \eqref{emm}.
Note that this will not affect
$\cR_0$, $\ga_n$, $\nu$ or the other constants and assumptions above.

Similarly, we assume
that initially there are no recovered vertices, that is $\nR =
0$. It is easy to modify the proofs below to handle the case
$\nR \ge 1$.
Alternatively,
we may observe that our results in the case $\nR = 0$ imply the
corresponding results for general $\nR$
by the following argument.
(See \cite{Janson:2009:SJ215} for similar arguments in a related situation.)
We replace
each initially
recovered vertex of degree $k$  by $k$ separate
susceptible vertices of degree 1, so there are a total of $\XR0:=\sumk k\nRk$
additional ``fake'' susceptible vertices of degree 1;
this will not change the course of the epidemic
(in the multigraph case)
except that some of these
fake susceptible vertices will be infected. (Note that they never can infect
anyone else.)
The alteration will not affect $\cR_0$,
although  $\ga_n$ and the asymptotic distribution $(p_k)$ will be
modified. Note that $\XR0=O(\nS)$ by \ref{d:2ndmoment} and \ref{d:ns};
by considering suitable subsequences we may thus assume that
$\XR0/\nS\to r$ for some $r\in[0,\infty)$.
It is easy see that the modified degree distribution satisfies all the
assumptions above and that,
if we use a prime to indicate quantities after the replacement,
then
$\nS'=\nS+\XR0\sim (1+r)\nS$,
$\gan'\sim \gan/(1+r)$,
$\nS'\gan'=\nS\gan$,
$\nu'=(1+r)\nu$,
$p_1'=(p_1+r)/(1+r)$,
$p_k'=p_k/(1+r)$ for $k\neq1$,
$\gl'=(\gl+r)/(1+r)$,
$\glii'=\glii/(1+r)$,
$\gliii'=\gliii/(1+r)$.

If case \ref{tsizei}\ref{tsizei:small} in \refT{t:criticalSIR}
occurs for the modified process, it occurs for the original process too,
since $\fs\le\fs'$, and there is nothing more to prove.

In the other cases,
we have $\fs'\to\infty$ \whp.
We note that of the $\nSx1'=\nSx1+\XR0$ susceptible
vertices of degree 1, $\XR0$ are fake.
Conditioned on the number $\fs_1'$ of susceptible vertices of degree 1 that
get infected, the number $\fs'-\fs=\fs_1'-\fs_1$ of fake susceptible
vertices that get infected has a hypergeometric distribution,
and, using \eg{} Chebyshev's inequality,
it follows that \whp{}
(leaving the simple modification when $p_1=r=0$ to the reader)
\begin{equation}
  \fs'-\fs=\fs_1'-\fs_1=\frac{\XR0}{\nSx1+\XR0}\fs_1'+o(\fs')
=\frac{r}{p_1+r}\fs_1'+o(\fs').
\end{equation}
By \eqref{t2ab} and the relations above, this yields \whp{}
\begin{equation}
  \fs'-\fs
=\frac{r}{p_1+r}\fs_1'+o(\fs')
=\frac{r}{p_1+r}\frac{p_1'}{\gl'}\fs'+o(\fs')
=\frac{r}{\gl+r}\fs'+o(\fs').
\end{equation}
Consequently, \whp{} $\fs/\fs'=\xqfrac{\gl}{\gl+r}+o(1)$.

It is then easy to check that \refT{t:criticalSIR} and
Theorem~\ref{t:takeoff} for the original process both follow from these
results in the case with no initially recovered vertices.

We make these  simplifying assumptions $\nI=o(n)$ and $\nR=0$ throughout
this section (and the following one), in addition to
\ref{d:first}--\ref{d:last}. In particular, $\nI+\nR=o(n)$, and thus \ref{d:ns} is
strengthened to
\begin{equation}\label{nns}
\nS/n \to 1.
\end{equation}

We may also assume $\ga_n>0$, by ignoring some small $n$ if necessary.
Finally, recall that in the proofs we first consider the random multigraph $\GrCM$.

\subsection{Time-changed epidemic on a random multigraph}
\label{s:acceleratedepidemic}

We first study the epidemic on the configuration model multigraph $\GrCM$, revealing its edges (i.e.\ pairing off the half-edges) while the epidemic spreads, as in~\cite{JansonLuczakWindridgeSIR} (see other variants in~\cite{andersson1998limit,Ball200869,DDMT12,BohmanPicollelli}).
We call a half-edge susceptible,  infective or recovered according to the type of vertex it is attached to.
Unpaired half-edges are said to be {\em free}. Initially, each vertex $i$ has
$d_i$ half-edges and all of them are free.

Each free infective half-edge chooses a
\fhe{} at rate $\beta_n > 0$, uniformly at random from among all the other free half-edges.  Together the pair form an edge, and are removed from the set of free half-edges.
If the chosen \fhe{} belongs to a susceptible vertex then that vertex
becomes infective.
Infective vertices
recover at rate $\rho_n \ge 0$.

We stop the process when no infective \fhe{}s remain, which is the time
when the epidemic stops spreading.  Some infective vertices may remain but
they trivially recover at \iid{}  exponential times.
Some free susceptible and recovered half-edges may also remain.  These could
be paired uniformly to reveal the remaining edges in $\GrCM$, if desired.
However, this step is irrelevant for the course of the epidemic.

In order to prove our results, we perform a time change in the process:
when in a state with $\xI \ge 1$ free infective half-edges, and a total of $x$ \fhe{}s of any type, we multiply all transition rates by $(x-1)/\beta_n \xI$  (this multiple is at least $1/(2\beta_n)$, since $\xI \ge 1$ implies that $x \ge 2$).
Then each free susceptible half-edge gets infected at rate 1,  each infective
vertex recovers at rate $\rho_n(x-1)/\beta_n\xI$, and each free infective
half-edge  pairs off at rate $(x-1)/\xI$.

In the time changed process, let $\Sv{t}$, $\Iv{t}$ and $\Rv{t}$ denote the
numbers of susceptible,
infective and recovered vertices, respectively, at time $t \ge 0$.
Let $\Svk{t}$ be the number of susceptible vertices of degree $k\ge 0$ at time $t$.
Then $\Sv{t} = \sumk \Svk{t}$ is decreasing and $\Rv{t}$ is increasing in $t$.
Moreover, $\Svk{0} = \nSk$, $\Iv{0} = \nI$ and $\Rv{0} = \nR = 0$. 

Also, we let $\XS{t}$, $\XI{t}$ and $\XR{t}$ be the numbers of free susceptible, infective and recovered half-edges, respectively, at time $t$.  Then $\XS{t} = \sumk k \Svk{t}$ is decreasing,
$\XS{0} = \sumk k \nSk$,  $\XI{0} = \sumk k \nIk$ and $\XR{0} = 0$ (by our
simplifying assumptions in \refS{s:simplify}).

We denote the duration of the time-changed epidemic by
\begin{equation}\label{e:iEnd}
 \iEnd \= \inf\{t \ge 0  : \XI{t} = 0 \}.
\end{equation}
At time $\iEnd$, we simply stop, as said above.
(The last infection may have occurred somewhat earlier, since the last
free infective half-edge may have recovered or paired of with an infective
or recovered half-edge. It follows \eg{} from \eqref{e:XSLLN} below that the
last actual infection \whp{} did not happen much earlier, but this is
irrelevant for our results, and we use \eqref{e:iEnd} as the definition.)

\subsection{Concentration of measure}
We will show that $\Sv{t}(k)$, $\XS{t}$, $\XI{t}$ and $\XR{t}$ are uniformly close to
 certain deterministic functions.  Let

\begin{align}
\label{e:fXSnt}
 \fXSn(t) &\= \sumk k \nSk e^{-kt},
\\
\label{e:fXRnt}
 \fXRn(t) &\= 
  \frac{\rho_n}{\beta_n}e^{-t}(1 - e^{-t})\sumk k\nk,
\\
\label{e:fXInt}
 \fXIn(t) &\= e^{-2t}\sumk k \nk - \fXSn(t) - \fXRn(t).
\end{align}

\begin{theorem}\label{t:conc}
Let $\tgan$ be any numbers with $\ga_n\le\tgan=o(1)$ such that
  \begin{equation}\label{k2I}
	\sumk k^2 \nIk = o\bigpar{n^2\tgan^4}.
  \end{equation}
Then,
for any fixed $t_0 <\infty$,
\begin{align}
\label{e:SLLN}
 \sumk \sup_{t\le \tgan t_0 \wedge \iEnd} \abs{\Sv{t}(k) - \nSk e^{-kt}}
&= \op(n\tgan^2),
\\
\label{e:XSLLN}
 \sup_{t\le \tgan t_0 \wedge \iEnd} \abs{\XS{t} - \fXSn(t)}
&= \op(n\tgan^2),
\\
\label{e:XRLLN}
 \sup_{t\le\tgan t_0 \wedge \iEnd} \abs{\XR{t} - \fXRn(t)}
&= \op(n\tgan^2),
\\
\label{e:XILLN}
 \sup_{t\le\tgan t_0 \wedge \iEnd} \abs{\XI{t} - \fXIn(t)}
&= \op(n\tgan^2).
\end{align}
\end{theorem}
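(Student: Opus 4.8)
The plan is to track the quantities $\XS{t}$, $\XR{t}$, $\XI{t}$ and each $\Sv{t}(k)$ as Markov jump processes in the time-changed epidemic, and show each is close to the claimed deterministic profile via a \emph{Doob-decomposition plus martingale inequality} argument, run up to the stopping time $\tgan t_0\wedge\iEnd$. First I would write down the infinitesimal drifts. In the time-changed process a free susceptible half-edge of a degree-$k$ vertex is consumed at rate $1$, and when such a half-edge pairs off the whole vertex (with its other $k-1$ free half-edges) leaves the susceptible pool; so the expected rate of change of $\Sv{t}(k)$ is $-k\Sv{t}(k)\cdot(\text{something}\approx1)$, giving $\Sv{t}(k)\approx\nSk e^{-kt}$ to leading order — this is where the factor $e^{-kt}$ comes from and why $\fXSn(t)=\sumk k\nSk e^{-kt}$ is the right target for $\XS{t}=\sumk k\Sv{t}(k)$. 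For $\XR{t}$, recovered half-edges are created when an infective vertex recovers (rate $\rho_n(x-1)/\beta_n\xI$ per infective vertex, so rate $\approx(\rho_n/\beta_n)(x-1)$ overall after multiplying by $\Iv t$-worth of free infective half-edges — more precisely the free infective half-edges attached to it) and consumed when a recovered half-edge is chosen as the partner of a pairing infective half-edge (rate $\approx \XR t$). Solving the resulting linear ODE with the total-half-edge count decaying like $e^{-2t}\sumk k\nk$ (two half-edges removed per pairing event, at the time-changed rate) yields $\fXRn(t)$, and then $\fXIn(t)$ is forced by the conservation law $\XS{t}+\XI{t}+\XR{t}=e^{-2t}\sumk k\nk+\text{(small error)}$, which is exactly the definition \eqref{e:fXInt}.

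Second, I would make this rigorous. Fix one of the processes, say $\XS{t}$. Write $\XS{t}=\fXSn(t)+M_t+A_t$ where $M_t$ is a martingale and $A_t$ an absolutely continuous ``error drift'' coming from the discrepancy between the true time-changed rate $(x-1)/\xI$-type factors and their leading-order value, and from the fact that each pairing of a susceptible half-edge removes $k-1$ \emph{other} susceptible half-edges rather than being a clean exponential thinning at the half-edge level. On the event $t\le\tgan t_0\wedge\iEnd$ we have good a priori control: $\XI{t}\ge1$ by definition of $\iEnd$, the total number of free half-edges is $\Theta(n)$ (since $\nS/n\to1$ by \eqref{nns} and few half-edges have been used by time $O(\tgan)$), and the maximum susceptible degree is $o(\nS^{1/3})=o(n^{1/3})$ by \eqref{RdSmax}; the third-moment bound \eqref{k3} controls $\sumk k^3\nSk\le c_0 n$. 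These give that the error drift $A_t$ is $o(n\tgan^2)$ uniformly on $[0,\tgan t_0\wedge\iEnd]$ — this uses $\tgan\ge\ga_n$ and the fact that over a time interval of length $O(\tgan)$ only $O(n\tgan)$ half-edges get paired, and the deviation of each relevant ``per-event'' rate from its leading value is of relative size $O(\tgan)$, so the accumulated drift error is $O(n\tgan\cdot\tgan)=O(n\tgan^2)$ — while the quadratic variation of $M_t$ is bounded by (number of jumps)$\times$(jump size)$^2$, roughly $(n\tgan)\cdot(\dSmax)^2=o(n\tgan\cdot n^{2/3})$, whence by Doob's $L^2$ inequality $\sup_{t\le\tgan t_0\wedge\iEnd}|M_t|=\op(n\tgan^{3/2}n^{1/3})$; one then checks, using hypothesis \eqref{k2I} on $\sumk k^2\nIk$ to handle the infective half-edges entering the susceptible-side bookkeeping, that all of this is $\op(n\tgan^2)$. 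The degree-resolved bound \eqref{e:SLLN} is handled the same way per $k$ and then summed, using uniform integrability \eqref{e:cubeUI} to truncate the sum over $k$ (large-$k$ vertices contribute a negligible fraction of the total mass, uniformly in $n$).

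Third, $\XR{t}$ and $\XI{t}$: $\XR{t}$ satisfies a linear SDE driven by the same martingale machinery, and since $\fXRn(t)$ solves the corresponding linear ODE, Gronwall over the interval $[0,\tgan t_0]$ (of length $O(\tgan)$) turns the $\op(n\tgan^2)$ driving error and $\op(n\tgan^2)$ initial/martingale error into an $\op(n\tgan^2)$ bound on $\sup|\XR{t}-\fXRn(t)|$; the Gronwall constant is $O(1)$ because the interval has length $O(\tgan)=o(1)$. Finally \eqref{e:XILLN} follows from \eqref{e:XSLLN}, \eqref{e:XRLLN} and the conservation identity: $\XS{t}+\XI{t}+\XR{t}$ equals $\sumk k\nk$ minus twice the number of pairing events up to time $t$, and a separate (easy) concentration estimate shows this equals $e^{-2t}\sumk k\nk+\op(n\tgan^2)$ on the relevant interval, which combined with \eqref{e:fXInt} gives the claim. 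The main obstacle I anticipate is the drift/error analysis on the susceptible side — correctly accounting for the ``$k-1$ siblings'' effect when a susceptible half-edge is chosen (so that the per-vertex rather than per-half-edge dynamics produces exactly the $e^{-kt}$ profile), and bookkeeping the contribution of free infective half-edges, which is why the extra hypothesis \eqref{k2I} on $\sumk k^2\nIk$ is imposed; everything else is a fairly standard martingale-concentration-plus-Gronwall scheme, but getting every error term down to the sharp $\op(n\tgan^2)$ scale (rather than a cruder $\op(n\tgan)$) is delicate and is the technical heart of the proof.
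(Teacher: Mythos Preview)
Your overall architecture (Doob decomposition, martingale inequality, Gronwall for $X_R$, subtraction for $X_I$) matches the paper's, but there is a real gap in the quantitative bounds on the susceptible side that would make the argument fail at the required $\op(n\tgan^2)$ precision.

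\textbf{The quadratic-variation bound is too crude.} You bound the QV of the $\XS{}$-martingale by (number of jumps)$\times\dSmax^2\approx (n\tgan)\,o(n^{2/3})$, giving $\sup|M_t|=\op(n^{5/6}\tgan^{1/2})$. For this to be $\op(n\tgan^2)$ you would need $n\tgan^9\to\infty$, but the hypotheses only give $n\tgan^3\to\infty$. The correct observation is that when a degree-$k$ susceptible vertex is infected, $\XS{}$ jumps by exactly $-k$, so
\[
[\tilde M]_t=\sumk k^2\bigl(\nSk-\Svkm{t}\bigr),
\qquad
\E[\tilde M]_{\tgan t_0}\le \tgan t_0\sumk k^3\nSk = O(n\tgan)
\]
by the third-moment bound \eqref{k3}. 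This gives $\sup|M_t|=\Op(\sqrt{n\tgan})=\op(n\tgan^2)$, exactly using $n\tgan^3\to\infty$. Replacing the per-jump contribution $k^2$ by $\dSmax^2$ discards precisely the third-moment information that makes the argument work.

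\textbf{The ``drift error'' is not a rate-approximation error.} In the time-changed process each free susceptible half-edge is hit at rate \emph{exactly} $1$; there is no $(x-1)/\xI$-type correction on the susceptible side. The drift of $\XS{t}$ is exactly $-\sumk k^2\Svk t$, while $\fXSn'(t)=-\sumk k^2\nSk e^{-kt}$, so the accumulated drift discrepancy is $\int_0^t W_u\,du$ with
\[
W_u:=\sumk k^2\bigl(\Svkm u-\nSk e^{-ku}\bigr).
\]
This is a random fluctuation, not a deterministic rate error, and you need a separate concentration estimate $\sup_{u\le\tgan t_0}|W_u|=\op(n\tgan)$ to get $\int_0^{\tgan t_0}|W_u|\,du=\op(n\tgan^2)$. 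The paper obtains this (\refL{l:susdist2ndmomentBD}) by extending the process past $\iEnd$ so that the infection times $L_i$ become independent $\Exp(d_{\mathrm S,i})$ variables, then applying Bernstein's inequality plus a discretisation of $[0,\tgan t_0]$. Your sketch does not contain this step; the heuristic ``relative rate error $O(\tgan)$ over $O(n\tgan)$ events'' would at best give $O(n\tgan^2)$, not the required $o(n\tgan^2)$.

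\textbf{Minor points.} The hypothesis \eqref{k2I} is not used on the susceptible side at all; it enters only in bounding the quadratic variation of the $\XR{}$-martingale (the jump when an initially infective vertex of degree $k$ recovers contributes $k^2$). Your identification of where \eqref{k2I} is needed is off. Also, for \eqref{e:SLLN} the paper does not use uniform integrability to truncate; it applies a pure-death-chain variance estimate per $k$ and then Cauchy--Schwarz against $\sum k^{-2}$, with the tail $k>\tgan^{-1}$ handled by the third-moment bound.
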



The above result establishes concentration on time intervals of length
$O(\tgan)$.
In \refS{s:duration}, we use it to show that, for a suitable choice of
$\tgan$,  the duration
of the epidemic satisfies $\iEnd=O(\tgan)$ \whp.
It follows that  the theorem then holds also with $t_0=\infty$, see \refR{Rt1}.

The remainder of this subsection contains the proof of Theorem~\ref{t:conc}.  We first need two lemmas concerning
the evolution of the number of susceptible vertices and the total number of free half-edges.

In the time-changed epidemic, each free susceptible half-edge gets infected at rate~1, until $\iEnd$.
We further modify the process so that free susceptible half-edges continue to be infected at rate 1 even when there are no more free infective half-edges.
Let $\Svkm{t}$ be the
number of susceptible individuals of degree $k$ in the modified process.
Then
$(\Svkm{t\wedge \iEnd}: k \in {\mathbb Z}^+, t \ge 0)$ has the same distribution as $(\Svk{t\wedge\iEnd}: k \in {\mathbb Z}^+, t \ge 0)$,
and so, to prove \eqref{e:SLLN} and \eqref{e:XSLLN}, it suffices to prove that
\begin{equation}\label{e:SLLNx}
\sumk \sup_{t\le \tgan t_0} \abs{\Svkm{t} - \nSk e^{-kt}}
= \op(n\tgan^2),
\end{equation}
and
\begin{equation}
\sup_{t\le \tgan t_0} \abs{\tilde{X}_{S,t} - \fXSn(t)}
= \op(n\tgan^2),
\end{equation}
where $\tilde{X}_{S,t}= \sumk k\Svkm{t}$.
For each $t$, let
\begin{equation}\label{e:wtFIt}
W_t \= \sumk k^2( \Svkm{t} - \nSk e^{-kt} ).
\end{equation}


\begin{lemma}\label{l:susdist2ndmomentBD}
Fix $t_0 <\infty$ and assume
$\ga_n\le\tgan=o(1)$.
Then $\E\sup_{t\le \tgan t_0} |W_t|=o(n \tgan)$, and hence
$$
\E\sup_{t\le \tgan t_0\wedge\iEnd}
\biggabs{\sumk k^2( \Svk{t} - \nSk e^{-kt} )} = o(n\tgan).$$
\end{lemma}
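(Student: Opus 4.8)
The plan is to analyse the modified process directly, in which every free susceptible half-edge is infected at rate~$1$, independently of the rest of the process (this is exactly the point of passing to $\Svkm{t}$). A susceptible vertex of degree $k$ therefore survives up to time $t$ with probability $e^{-kt}$, and distinct vertices are independent; thus $\Svkm{t}$ is a sum of $\nSk$ i.i.d.\ indicators and $\E\Svkm{t}=\nSk e^{-kt}$, so $\E W_t=0$ for every fixed $t$. The first task is to control $\Var W_t$ and, more importantly, the supremum over $t\le\tgan t_0$. I would write $W_t=\sumk k^2\sum_{i} \bigpar{\xi_i(t)-e^{-kt}}$ where the inner sum is over the $\nSk$ susceptible vertices of degree $k$ and $\xi_i(t)=\ett{\text{vertex $i$ still susceptible at }t}$, so that $W_t=\sum_{i\in\text{susc}} d_i^2\bigpar{\xi_i(t)-e^{-d_i t}}$ is a sum of independent centred processes, one per susceptible vertex. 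Each summand is bounded in absolute value by $d_i^2\le\dSmax^2=o(\nS^{2/3})$ by \eqref{RdSmax}, and has variance at most $d_i^4 e^{-d_i t}(1-e^{-d_i t})\le d_i^4\bigpar{1\wedge d_i t}$; summing and using $\sumk k^3\nSk=O(n)$ from \eqref{k3} together with $t\le\tgan t_0$ gives $\Var W_t=O\bigpar{\sumk k^4\nSk\,(k t)}=O\bigpar{t\sumk k^5\nSk}$. This is where I must be slightly careful: the naive bound involves a fifth moment that is not assumed finite, so instead I would split $k\le M$ and $k>M$, use the $k^3$-uniform integrability \eqref{e:cubeUI} on the tail (where $d_i^4 e^{-d_i t}\le d_i^3\cdot d_i e^{-d_i t}\le C d_i^3$) and the crude bound $1-e^{-kt}\le kt$ on the head, yielding $\Var W_t=O(n\tgan)$ up to an $\eps$-fraction coming from the tail, hence $o(n\tgan)$ after letting $M\to\infty$ suitably.

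Second, to pass from a pointwise variance bound to a bound on the supremum, I would exploit monotonicity: each $\xi_i(t)$ is nonincreasing in $t$, so $\xi_i(t)-e^{-d_i t}$ has bounded variation with total variation at most $2$, and $W_t$ decomposes as a difference of two monotone-in-expectation pieces. The cleanest route is to discretise: partition $[0,\tgan t_0]$ into $N$ subintervals of length $\tgan t_0/N$, bound $\E\max_{j\le N}|W_{t_j}|$ by $\sqrt{N\max_j \Var W_{t_j}}=O\bigpar{\sqrt{N\cdot n\tgan}}$ (or by a union bound with Bernstein, using the $o(\nS^{2/3})$ bound on the increments), and then control the oscillation of $W_t$ within a subinterval. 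For the oscillation, for $t\in[t_j,t_{j+1}]$ write $W_t-W_{t_j}=\sum_i d_i^2\bigpar{\xi_i(t)-\xi_i(t_j)} - \sum_i d_i^2\bigpar{e^{-d_i t}-e^{-d_i t_j}}$; the second sum is deterministic and bounded in absolute value by $\sumk k^2\nSk\cdot k(t-t_j)=O\bigpar{n\tgan/N}$, while the first sum is a nonpositive sum of independent terms with mean equal to minus the second sum, variance $O\bigpar{\sum_i d_i^4\,d_i(t-t_j)}=O\bigpar{n\tgan/N}$ by the same head/tail split, and bounded increments $\dSmax^2=o(\nS^{2/3})$, so a Doob/Bernstein estimate controls its maximum over the subinterval by $o(n\tgan)/\sqrt N$ plus $O(n\tgan/N)$. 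Choosing $N$ a large constant (or slowly growing) makes the whole supremum $o(n\tgan)$ in expectation.

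Third, having established $\E\sup_{t\le\tgan t_0}|W_t|=o(n\tgan)$ for the modified process, I transfer it to the original (stopped) process. By the coupling noted just before the lemma, $(\Svk{t\wedge\iEnd})$ and $(\Svkm{t\wedge\iEnd})$ have the same law, so $\sumk k^2\bigpar{\Svk{t}-\nSk e^{-kt}}$ and $W_{t\wedge\iEnd}$ agree in distribution as processes; since $\sup_{t\le\tgan t_0\wedge\iEnd}|W_{t\wedge\iEnd}|\le\sup_{t\le\tgan t_0}|W_t|$, the second displayed bound follows immediately from the first. The main obstacle, and the step I would spend the most care on, is the variance estimate: the quantity $W_t$ weights each vertex by $k^2$, so its variance wants a fifth moment of the susceptible degree that is \emph{not} among the hypotheses; the resolution is the observation that on the relevant short time scale $t=O(\tgan)=o(1)$ one always gains a factor $1-e^{-kt}=O(kt)$ or $O(1)$ and, after splitting at a large threshold $M$, the head contributes $O(t\sumk k^4\nSk\cdot k)\le O(t\cdot M\sumk k^4\nSk)$—wait, more cleanly, on the head $k^4(1-e^{-kt})\le k^5 t\le M k^4 t\le M^2 k^3 t$, giving $O(M^2 n\tgan)$, a constant (in $n$) multiple of $n\tgan$, and on the tail one uses \eqref{e:cubeUI} with the bound $k^4 e^{-kt}(1-e^{-kt})\le k^4 e^{-kt}\le \CC k^3$ to get an $\eps$-fraction; letting $\eps\to0$ after $n\to\infty$ yields the $o(n\tgan)$ we need. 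Everything else (the discretisation, the Doob inequality for the càdlàg monotone pieces, and the coupling transfer) is routine.
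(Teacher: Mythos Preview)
Your overall architecture --- write $W_t$ as a sum of independent centred terms indexed by susceptible vertices, bound the variance at a fixed time, discretise $[0,\tgan t_0]$, use monotonicity of the two pieces of $W_t$ to control the oscillation between grid points, and finally transfer to the stopped process via the coupling --- is exactly the paper's strategy. The monotonicity trick for the oscillation and the coupling transfer are both fine.

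The gap is in the variance estimate, which you yourself flag as the crux. Your tail bound is wrong: you claim $k^4 e^{-kt}(1-e^{-kt})\le k^4 e^{-kt}\le Ck^3$, but $k e^{-kt}$ is not bounded as $t\downarrow0$ (it tends to $k$), so this fails precisely on the short time interval you care about. Consequently your head/tail split delivers at best $\Var W_t\le C_1 M^2 n\tgan + C_2\eps(M)n$, and the second term is $\gg n\tgan$ since $\tgan\to0$; no choice of $M=M(n)$ rescues this. The fix is simpler than a split: bound uniformly
\[
\sum_i d_i^4\,\Var(\xi_i(t))\le \sum_i d_i^4\bigl(1-e^{-d_i t}\bigr)\le t\sum_i d_i^5\le t\,\dSmax^2\sum_i d_i^3\le c_0\,\dSmax^2\, n\tgan t_0,
\]
using only \eqref{k3}. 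Since $\dSmax=o(n^{1/3})=o\bigl((n\tgan)^{1/2}\bigr)$ (because $n\tgan^3\ge n\alpha_n^3\to\infty$), this gives $\Var W_t=o\bigl((n\tgan)^2\bigr)$ with room to spare. From here either your second-moment route with $N\to\infty$ slowly (so that both $N\dSmax^2/(n\tgan)\to0$ and $N\to\infty$) or, as the paper does, Bernstein's inequality plus a union bound over $\omega_n\to\infty$ grid points, finishes the job.
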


\begin{proof}[Proof of \refL{l:susdist2ndmomentBD}]

We enumerate the initially susceptible vertices as $i = 1,2,\ldots,\nS$ and
denote by $\dSi$  the degree of initially susceptible vertex $i$.   Let
$L_i$ be the time at which initially susceptible vertex $i$ becomes
infective (in the modified process).
Then each $L_i$ has exponential distribution with rate $\dSi$,
and the $L_i$ ($i = 1,2,\ldots,\nS$) are all independent of one another. It
follows that, for each fixed $t$, the random variables $\FIt \=
\dSi^2(\Indi{L_i > t} - e^{-t\dSi})$ each have mean zero and are all
independent. Note that $W_t = \sum_{i = 1}^{\nS} \FIt$.

Each $|\FIt|$ is bounded by $\dSmax^2$, where, as in~(\ref{RdSmax}),
$\dSmax = \max_i\dSi$.  Hence,
by Bernstein's inequality
for sums of bounded independent centred  random variables,
see \eg{} \cite[Theorem 2.7]{mcdiarmid98} or \cite[(2.10)]{BoucheronLM},
for each $a\ge0$,
\begin{equation}\label{e:Bernstein}
\Prob(\abs{W_t} > a) = \Prob\left(\Bigabs{\sum_{i = 1}^{\nS} \FIt} > a\right)
\le 2\exp\left( - \frac{a^2}{2\sum_{i = 1}^{\nS} \E \FIt^2 + 2a \dSmax^2/3} \right).
\end{equation}

Now, for any $t \le \tgan t_0$, using \eqref{k3},
\begin{equation}\label{e:Ber2}
  \begin{split}
2\sum_{i = 1}^{\nS} \E \FIt^2
 &= 2\sum_{i = 1}^{\nS} \dSi^4\Var\bigpar{\Indi{L_i > t}}  \le 2\sum_{i = 1}^{\nS} \dSi^4 \bigpar{1 - e^{-t\dSi}}\\
 & \le 2 \dSmax^2 t \sum_{i = 1}^{\nS} \dSi^3 
  \le 2t_0 \tgan \dSmax^2 \sum_k k^3\nSk \le 2c_0t_0 \tgan n \dSmax^2.
  \end{split}
\end{equation}

Furthermore, $\tgan n^{1/3}\ge\ga_n n\qqq \to \infty$ and
so by \eqref{RdSmax},
\begin{equation}\label{dsx}
 \dSmax= o\bigpar{n^{1/3}}
=o\bigpar{(n\tgan)\qq}.
\end{equation}
Thus,  for $n$ sufficiently large,
$\dSmax\le (n\tgan)\qq$, and
then for any $u \ge 2c_0
t_0$ and  $a = u (n \tgan)^{1/2}\dSmax$, by \eqref{e:Ber2},
\begin{equation*}
  \begin{split}
\exp\left( - \frac{a^2}{2\sum_{i = 1}^{\nS} \E \FIt^2 + 2a \dSmax^2/3} \right)
&\le \exp\left( - \frac{u^2}{2c_0t_0 + 2u \dSmax/3(n\tgan)^{1/2}} \right)\\
&\le \exp\left( - \frac{u^2}{2c_0t_0 + u} \right)\le \exp\left( - u/2\right).
  \end{split}
\end{equation*}
Hence, by~\eqref{e:Bernstein}, for
$n$ sufficiently large and for each
each $t \le t_0 \tgan$ and $u \ge 2c_0t_0$,
\begin{equation}
  \label{e:B3}
\Prob(\abs{W_t} > u (n \tgan)^{1/2}\dSmax) \le 2\exp\left( - u/2\right).
\end{equation}
Note also that $(n \tgan)^{1/2} \dSmax = o(n \tgan)$ by \eqref{dsx}. Let
$\omega_n$ be an integer valued function such that $\omega_n \to \infty$ and
$(n \tgan)^{1/2} \dSmax\omega_n  = o(n \tgan)$. We divide the interval
$[0,t_0 \tgan]$ into $\omega_n$ subintervals $[\tau_l,\tau_{l+1}]$, where
$\tau_l = l t_0\tgan/\omega_n$ for $l=0,\ldots, \omega_n-1$.

Since $\Svkm{t}$ and $e^{-kt}$ are both decreasing in $t$, each of the sums
$\sumk k^2 \Svkm{t}$ and $\sumk k^2 \nSk e^{-kt}$ is also decreasing in $t$.  Thus, for any $0 \le l <  \omega_n$,
\begin{align*}
\sup_{\tau_l \le t \le \tau_{l+1}}\abs{W_t}
& \le \lrabs{\sumk k^2(\Svkm{\tau_l} - \nSk e^{-k\tau_{l+1}})} + \lrabs{\sumk k^2(\Svkm{\tau_{l+1}} - \nSk e^{-k\tau_{l}})} \nonumber \\
& \le \abs{W_{\tau_{l}}} + \abs{W_{\tau_{l +1}}} + 2\sumk k^2 \nSk(e^{-k\tau_l} - e^{-k\tau_{l+1}})\\
& \le \abs{W_{\tau_{l}}} + \abs{W_{\tau_{l +1}}} + 2\sumk  k^3 \nSk (\tau_{l+1} - \tau_l ),
\end{align*}
and so, since $\sumk  k^3 \nSk \le c_0 n$ and
$\tau_{l+1}-\tau_l=t_0\tgan/\go_n=o(\tgan)$,
noting  $W_0=0$,
\begin{equation}\label{e:supwt}
\sup_{t \le \tgan t_0}\abs{W_t}  = \max_{l < \omega_n} \sup_{\tau_l \le t \le \tau_{l+1}}\abs{W_t}
\le 2\max_{1 \le l \le \omega_n}\abs{W_{\tau_{l}}} + o(n\tgan).
\end{equation}
Now, for $n$ sufficiently large and $u \ge 2c_0t_0$,
by \eqref{e:B3},
\begin{equation}\label{eB4}
\Prob\Bigpar{\max_{1 \le l\le\omega_n}\abs{W_{\tau_{l}}} > u(n \tgan)^{1/2}\dSmax}
\le 2\omega_n \exp(-u/2).
\end{equation}
For sufficiently large $n$,
$2\go_n\ge e^{c_0t_0}$, and then
\eqref{eB4} holds trivially for $u<2c_0t_0$ too.
Hence,
for large $n$,
\begin{align}
\E \max_{1 \le l \le \omega_n } \abs{ W_{\tau_{l}} }
& =  (n\tgan)^{1/2}\dSmax \int_{0}^\infty
\Prob\Bigpar{ \max_{1 \le l  \le \omega_n} \abs{W_{\tau_{l}}} > u (n \tgan)^{1/2}\dSmax}\,du \nonumber \\
& \le (n \tgan)^{1/2}\dSmax \int_{0}^\infty 2\go_n e^{-u/2} du  \nonumber \\
&
=4 (n \tgan)^{1/2}\dSmax \go_n
= o(n\tgan) \label{e:Emaxdev},
\end{align}
and hence also
$\E \sup_{t \le \tgan t_0}\abs{W_t} = o(n \tgan)$
by \eqref{e:supwt}.
\end{proof}

We now prove a concentration of measure result for
the total number $\X{t}$ 
of \fhe{}s.

\begin{lemma}\label{l:XLLN}
For every fixed $t_0 > 0$,
and $\ga_n\le\tgan=o(1)$,
\begin{equation}
\label{e:XLLN}
 \sup_{t \le  \tgan t_0 \wedge \iEnd}
\Bigabs{\X{t} - e^{-2t}\sumk k\nk}  = \op(n\tgan^2).
\end{equation}
\end{lemma}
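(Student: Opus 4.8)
The plan is to track $X_t$, the total number of free half-edges at time $t$, directly as a pure-jump process and compare it with the deterministic curve $e^{-2t}\sumk k\nk$. In the time-changed process, a free susceptible half-edge becomes infective at rate $1$ each (this does not change $X_t$), each free infective half-edge pairs off at rate $(x-1)/x_I$ (removing two free half-edges per pairing, i.e.\ per event the count $X_t$ drops by $2$), and each infective vertex recovers at rate $\rho_n(x-1)/(\beta_n x_I)$ (which changes the \emph{type} of its free half-edges, but not $X_t$). Hence, as long as $t\le\iEnd$, the process $X_t$ decreases by exactly $2$ at rate $X_I(t)\cdot (X_t-1)/X_I(t) = X_t-1$; that is, $X_t$ behaves like a linear death-type chain whose jump rate is $X_t-1$ and whose jump size is $2$. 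First I would write the Doob--Meyer decomposition $X_t = X_0 - 2\int_0^{t\wedge\iEnd}(X_s-1)\dd s + M_t$ where $M_t$ is a martingale, and observe that the deterministic approximant $\fXXn(t):=e^{-2t}\sumk k\nk$ solves $\fXXn'(t)=-2\fXXn(t)$ with $\fXXn(0)=\sumk k\nk=X_0+O(n_I)=X_0+o(n)$ (using $n_I=o(n)$ from \refS{s:simplify} and \ref{d:2ndmoment}).

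The key steps, in order: (i) Set $D_t:=X_t-\fXXn(t)$; from the two ODEs/decompositions derive $D_t = D_0 - 2\int_0^{t\wedge\iEnd} D_s\dd s - 2\int_0^t(-1)\,\Indi{s\le\iEnd}\dd s + M_t$, so $|D_t| \le |D_0| + 2\int_0^{t\wedge\iEnd}|D_s|\dd s + 2t + |M_t|$; on the interval $t\le\tgan t_0$ the ``$+2t$'' term is $O(\tgan)=o(n\tgan^2)$ since $n\tgan\to\infty$ (indeed $n\tgan\ge \nS\alpha_n\to\infty$ is far more than enough), and $|D_0|=o(n)$ — wait, that is not $o(n\tgan^2)$; so more care is needed here, and in fact one should define $\fXXn$ with $\fXXn(0)=X_0$ exactly, i.e.\ absorb the $o(n)$ discrepancy, noting $|\sumk k\nk - X_0| = \sumk k\nIk = o(n)$ may still be too big. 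The cleaner route, which I would actually follow: prove the bound for $\tilde X_{S,t}+\tilde X_{R,t}+X_I(t)=X_t$ by instead using the already-available control of the pieces — but $X_R$ and $X_I$ are exactly what we are trying to control, so that is circular. Therefore: (ii) estimate the martingale $M_t$ via its predictable quadratic variation. Each jump changes $X$ by $2$ and occurs at rate $X_s-1\le X_0=O(n)$, so $\langle M\rangle_t = 4\int_0^{t\wedge\iEnd}(X_s-1)\dd s \le 4X_0\cdot\tgan t_0 = O(n\tgan)$. By Doob's $L^2$ inequality, $\E\sup_{t\le\tgan t_0}M_t^2 \le 4\,\E\langle M\rangle_{\tgan t_0} = O(n\tgan)$, hence $\sup_{t\le\tgan t_0\wedge\iEnd}|M_t| = \Op\big((n\tgan)^{1/2}\big) = \op(n\tgan^2)$ because $(n\tgan)^{1/2}/(n\tgan^2)=(n\tgan^3)^{-1/2}\to 0$ using $\nS\alpha_n^3\to\infty$ and $\tgan\ge\alpha_n$, $\nS\sim n$. (iii) Finally, apply Grönwall: from $|D_t|\le |D_0|+2t+\sup|M|+2\int_0^{t\wedge\iEnd}|D_s|\dd s$ and the fact that all terms other than the integral are $\op(n\tgan^2)$ — provided we have arranged $|D_0|=\op(n\tgan^2)$, which forces choosing $\fXXn(0):=X_0$, i.e.\ replacing $\sumk k\nk$ by $X_0=\sumk k\nSk$ and absorbing $\sumk k\nIk$ separately; but \eqref{e:XILLN} already has $\fXIn$ built from $\sumk k\nk$, so a final line reconciles the $o(n\tgan^2)$ gap — Grönwall gives $\sup_{t\le\tgan t_0\wedge\iEnd}|D_t| \le (\,\op(n\tgan^2)\,)e^{2t_0\tgan}=\op(n\tgan^2)$, which is \eqref{e:XLLN}.

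The main obstacle, and the thing I would be most careful about, is the bookkeeping of the ``inhomogeneity'' in the drift: the honest identity is $X_t = X_0 - 2\int_0^{t\wedge\iEnd}(X_s-1)\dd s + M_t$, and the constant $+1$ inside the integrand, integrated, contributes only $O(\tgan)$, which is comfortably $\op(n\tgan^2)$, so it is harmless; the real issue is whether $e^{-2t}\sumk k\nk$ versus $e^{-2t}X_0$ matters, i.e.\ whether the $o(n)$-size difference $\sumk k\nIk\, e^{-2t}$ is negligible on the $n\tgan^2$ scale — it is \emph{not} automatically, so I would state the lemma's proof as giving the bound with $e^{-2t}X_0$ and then note that by \eqref{k2I} and Cauchy--Schwarz $\sumk k\nIk = O\big((n\sumk k^2\nIk)^{1/2}\big) = o(n^{3/2}\tgan^2)$, which need not be $o(n\tgan^2)$ — so in fact the correct reading is that assumption \eqref{k2I} is tuned precisely so that the relevant error terms ($|D_0|$, $\langle M\rangle^{1/2}$, and later the analogous $\fXIn$ discrepancies) are all $\op(n\tgan^2)$; I would verify each such term against \eqref{k2I} explicitly rather than wave at it. A secondary, purely technical point is justifying the semimartingale decomposition and the quadratic-variation formula for this explicit finite-state continuous-time Markov chain, which is standard (e.g.\ Dynkin's formula applied to $f(X)=X$ and $f(X)=X^2$) and I would cite it rather than re-derive it.
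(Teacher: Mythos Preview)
Your basic approach---write the Doob decomposition for $X_t$, bound the martingale via its quadratic variation, then close with Gr\"onwall---is correct and is essentially what the paper does, except that the paper packages it into a single citation: after extending the process past $\iEnd$ so that pairs still disappear at rate $x-1$, they observe that $\tilde X_t-1$ is a linear death chain with jump size $2$ and cite \cite[Lemma 6.1]{JansonLuczakGiantCpt} to obtain directly $\E\sup_{t\le\tgan t_0}|\tilde X_t-1-e^{-2t}(\sum_k kn_k-1)|^2=O(n\tgan)$, which gives $\op(n\tgan^2)$ in one stroke. No Gr\"onwall step is needed, and the ``$-1$'' shift absorbs the inhomogeneity in the drift that you handle separately.

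Your proposal does contain a genuine confusion, though not a fatal one. You write $X_0=\sum_k k\nSk$ and then spend the second half of the proposal worrying about whether $|D_0|=|\sum_k kn_k-X_0|=\sum_k k\nIk$ is $\op(n\tgan^2)$. But in the construction of \refS{s:acceleratedepidemic}, \emph{all} half-edges are free at time $0$, including those on initially infective vertices; thus $X_0=X_{S,0}+X_{I,0}+X_{R,0}=\sum_k k\nSk+\sum_k k\nIk+0=\sum_k kn_k$ exactly, and $D_0=0$. The ``main obstacle'' you identify therefore does not exist, and the appeals to \eqref{k2I} and Cauchy--Schwarz to control $\sum_k k\nIk$ are unnecessary here. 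With $D_0=0$, your Gr\"onwall argument goes through cleanly: the $2t$ term is $O(\tgan)=\op(n\tgan^2)$, the martingale term is $\Op((n\tgan)^{1/2})=\op(n\tgan^2)$ since $n\tgan^3\to\infty$, and the result follows.
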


\begin{proof}
 When in a state with $\xI \ge 1$ free infective half-edges, and thus $x \ge
 2$ \fhe{}s in total,  each free infective half-edge pairs off at rate
 $(x-1)/\xI$, and so the number of \fhe{}s  decreases by $2$ at rate
 $x-1$.
We modify the process so that pairs of \fhe{}s still disappear at rate $x-1$
when there are no more free infective half-edges (as long as $x\ge2$).
Let $\tilde{X}_t$ be the number of free half-edges at time $t$ in the modified process. Then it suffices to prove that
$$
\sup_{t \le  \tgan t_0} \Bigabs{\tilde{X}_t - e^{-2t}\sumk k\nk} = \op(n\tgan^2).$$


Now, $\tilde{\X{t}}-1$ is a linear death chain starting from $\sumk k\nk - 1$, and taking jumps from state $j$ to $j -2$ at rate $j$. By \cite[Lemma  6.1]{JansonLuczakGiantCpt}, with $d = 2$, $\gamma = 1$,
and $x = \sumk k\nk - 1$,
\[
 \E\sup_{t \le \tgan t_0}
\Bigabs{ (\tilde{\X{t}}-1) - e^{-2t}\Bigpar{\sumk k\nk - 1} }^2 \le 16 (e^{2\tgan t_0} - 1)\sumk k\nk  + 32.
\]

But $\sumk k\nk = O(n)$ by \ref{d:2ndmoment}, $\tgan t_0 = o(1)$ and
$n\tgan \ge n\gan\to \infty$ by \ref{d:alphan}, so the \rhs{} is
$O(n\tgan)$, and so
$\sup_{t \le  \tgan t_0} \bigabs{\tilde{X}_t - e^{-2t}\sumk k\nk} = \Op
\bigpar{\sqrt{n \tgan}} = \op(n\tgan^2)$, using \ref{d:alphan}.
\end{proof}

\begin{proof}[Proof of \refT{t:conc}]
We start by proving  \eqref{e:XSLLN}, and, as remarked after the statement of \refT{t:conc}, it is enough to prove that
\begin{equation}
\label{e:XSLLNm}
 \sup_{t\le \tgan t_0 } \Bigabs{\sumk k\Svkm{t} - \fXSn(t)} = \op(n\tgan^2).
\end{equation}
Now, for each $k$, $(\Svkm{t})$ is a linear death chain starting from $\nSk$ and decreasing by 1
at rate $kx$ when in state $x$, and so
\begin{equation}\label{e:semimgdecomp0}
\Svkm{t} = \nSk - k \int_0^t \Svkm{u}du + \tilde{M}_{t}(k),
\end{equation}
where $\tilde{M}(k)=(\tilde{M}_t(k))$ is a zero-mean martingale. It follows that
\begin{align}
\sumk k \Svkm{t} - \fXSn(t)  & = \sumk k (\Svkm{t} - \nSk e^{-kt})  \nonumber \\
&= - \sumk k^2 \int_0^t (\Svkm{u} - \nSk e^{-ku})du + \tilde M_t \nonumber \\
& = -\int_0^t W_u du + \tilde M_t,\label{e:semimgdecomp}
\end{align}
where $\tilde M_t = \sum_{k} k \tilde{M}_{t}(k)$ defines a zero-mean martingale $\tilde{M}= (\tilde{M}_t)$.

Since $\Svkm{t}$ and $\tilde{S}_t(j)$ with $k \not = j$ never jump
simultaneously, we have $[\tilde{M}(k),\tilde{M}(j)]=0$,
where $[\cdot,\cdot]$ is the quadratic covariation, see e.g.\
\cite[Theorem 26.6]{kallenberg}.
Since also each
jump of  $\Svkm{t}$ is by $-1$,
the quadratic variation $[\tilde M]_t:=[\tilde M,\tilde M]_t$
is
\begin{align*}
[\tilde M]_t & = \sumk k^2 [M(k)]_t
 = \sumk k^2 \sum_{u \le t} (\Delta \Svkm{u})^2 \\
& = - \sumk k^2 \sum_{u \le t} (\Delta \Svkm{u})
 = \sumk k^2 (\nSk - \Svkm{t}) \\
& = \sumk k^2 (\nSk e^{-kt} - \Svkm{t} + \nSk(1 - e^{-kt})) \\
& \le \Bigabs{\sumk k^2 (\Svkm{t} - \nSk e^{-kt})} + t\sumk k^3 \nSk \\
& = \abs{W_t} + O(tn),
\end{align*}
again using \eqref{k3}.
By \refL{l:susdist2ndmomentBD},
$\E\sup_{t \le \tgan t_0}\abs{W_t} = o(n\tgan)$, so
\begin{equation}\label{e:EQVbd}
\E[\tilde M]_{\tgan t_0}  = O(n \tgan),
\end{equation}
and so, using the Burkholder--Davis--Gundy inequalities
\cite[Theorem 26.12]{kallenberg},
$\sup_{t \le \tgan t_0} \abs{\tilde M_t} = \Op(\sqrt{n\tgan})$. Hence
by \eqref{e:semimgdecomp} and \refL{l:susdist2ndmomentBD}, uniformly in $t \le \tgan t_0$,
\begin{align*}
\Bigabs{\sumk k (\Svkm{t} - \nSk e^{-kt})} & \le \Bigabs{\int_0^t W_u du} + \Op(\sqrt{n\tgan}) \\
& \le \tgan t_0 \sup_{t \le \tgan t_0}  \abs{W_t} + \Op(\sqrt{n\tgan}) \\
& = \op(n \tgan^2),
\end{align*}
using again \ref{d:alphan}.
This establishes \eqref{e:XSLLN}.


Next we prove \eqref{e:SLLN}.
By \cite[Lemma  6.1]{JansonLuczakGiantCpt}
with $d = 1$ and $\gamma = k$ and $x = \nSk$,
for $k \le \tgan^{-1}$,
\begin{equation}\label{e:JLpdcest}
 \E \sup_{t \le \tgan t_0} \abs{\Svkm{t} - \nSk e^{-kt}}^2 \le 4 ( e^{k\tgan t_0} - 1) \nSk \le 4 k\tgan t_0 e^{t_0} \nSk,
\end{equation}
where the last step uses the simple inequality $e^x-1\le xe^x$ for $x\ge0$.
For $k > \tgan^{-1}$, we use the trivial bound
$\abs{\Svkm{t} - \nSk e^{-kt}} \le \nSk$.
Using Jensen's inequality and then the Cauchy--Schwarz inequality, as well
as \eqref{k3} 
and \ref{d:alphan},
\begin{align*}
 \E \sumk \sup_{t \le \tgan t_0} \abs{\Svkm{t} - \nSk e^{-kt}}
& \le \sum_{1\le k \le \tgan^{-1}} (4 k\tgan t_0 e^{t_0} \nSk)^{1/2}
+ \sum_{k > \tgan^{-1}} \nSk \\
 & \le 2(\tgan t_0 e^{t_0})^{1/2} \left(\sumki k^{-2} \sumki k^{3} \nSk \right)^{1/2}  + \tgan^3 \sumki k^3 \nSk \\
 & = O( \sqrt{n\tgan}) + O( n\tgan^3)
  = o(n\tgan^2),
\end{align*}
which yields \eqref{e:SLLNx} and thus \eqref{e:SLLN}.

\medskip

We now prove \eqref{e:XRLLN}.
The number of free recovered half-edges changes when either an infective
vertex recovers or
a free infective half-edge pairs with a free recovered half-edge. In the time-changed process, when in a state with $\xI$ free infective half-edges and $x$ free half-edges, infective vertices recover at rate $\rho_n (x-1)/\beta_n \xI$.
%
Also,
each free recovered half-edge is chosen to be paired at rate 1, and thus
the number of recovered \fhe{}s decreases
by 1 at rate $\xR$. Hence,
for any $t\ge0$,
\begin{equation}
\label{e:dXRtIntegrated}
\XR{t \wedge \iEnd} = \XR{0} - \int_0^{t \wedge \iEnd} \XR{s}ds + \frac{\rho_n}{\beta_n} \int_0^{t \wedge \iEnd}(\X{s} - 1) ds + M_{\mathrm{R}, t \wedge \iEnd},
\end{equation}
where $M_{\mathrm{R}} = (M_{\mathrm{R},t})$ is a zero-mean martingale.

On the other hand, differentiating \eqref{e:fXRnt} reveals that
\begin{equation}
\fXRn'(t) = -\fXRn(t) + \frac{\rho_n}{\beta_n}e^{-2t} \sumk k\nk.
\end{equation}
Hence, subtracting the integral of that expression from \eqref{e:dXRtIntegrated}, and recalling that $\XR{0}=0$,
\begin{multline*}
\Bigabs{\XR{t\wedge\iEnd} -\fXRn(t\wedge\iEnd)}  \le
\int_0^{t \wedge \iEnd}\abs{\XR{s\wedge\iEnd} -\fXRn(s\wedge\iEnd)}ds \\
 \quad + \frac{\rho_n}{\beta_n}\int_0^{t \wedge \iEnd}
\Bigabs{\X{s} - 1 - e^{-2s} \sumk k\nk}ds
  + \abs{M_{\mathrm{R}, t\wedge\iEnd}}.
\end{multline*}
Then Gronwall's inequality yields
\begin{multline}\label{ika}
\sup_{t \le  \tgan t_0\wedge \iEnd} \abs{\XR{t} - \fXRn(t)}
 \le e^{\tgan t_0} \tgan t_0 \frac{\rho_n}{\beta_n} \left( \sup_{t \le \tgan
   t_0\wedge\iEnd} \Bigabs{\X{t} - e^{-2t} \sumk k\nk} + 1\right)  \\
\qquad +  e^{\tgan t_0} \sup_{t \le \tgan t_0\wedge\iEnd} \abs{M_{\mathrm{R},t}}.
\end{multline}
Since $\rho_n/\beta_n$ is bounded and $\tgan \to 0$,  the first term on the
\rhs{}
is $\op(n\tgan^2)$, by \eqref{e:XLLN}. It remains to show that the same is true of
the martingale term.

Note that $X_{R,t}$ jumps by $-1$ when a free recovered half-edge is paired with a free infective half-edge, and it jumps by $+k$ when an infective vertex with $k$ free half-edges recovers.
%
%
Also, each recovered half-edge or vertex was either initially infective or
was initially susceptible and then became infected prior to recovery.
Hence
\begin{align*}
\E [M_\mathrm{R}]_{\tgan t_0 \wedge \iEnd} & = \E [\sum_{s \le \tgan t_0 \wedge \iEnd} (\Delta M_{\mathrm{R},s})^2]\\
& \le
\E \sumk k \bigpar{\nIk + \nSk - \Sv{\tgan t_0 \wedge \iEnd}(k)}
+ \E \sumk k^2 \bigpar{\nIk + \nSk - \Sv{\tgan t_0 \wedge \iEnd}(k)} \\
& \le 2 \sumk k^2 \nIk
+ 2\E\sumk k^2\bigpar{\nSk - \Sv{\tgan t_0 \wedge \iEnd}(k)}\\
& = 2 \sumk k^2 \nIk
+ 2\E\sumk k^2\bigpar{\nSk(1  - e^{-k(\tgan t_0 \wedge \iEnd)})
+\nSk e^{-k(\tgan t_0 \wedge \iEnd)}
- \Sv{\tgan t_0 \wedge \iEnd}(k)}\\
& \le 2 \sumk k^2 \nIk
+2\tgan t_0 \sumk k^3 \nSk
+ 2\E\sumk k^2\bigpar{\nSk e^{-k(\tgan t_0 \wedge \iEnd)}
- \Sv{\tgan t_0 \wedge \iEnd}(k)}\\
& = o\bigpar{n^2\tgan^4}+ O(n\tgan)+o(n\tgan)
 = o\bigpar{n^2\tgan^4}
\end{align*}
by \eqref{k2I}, \eqref{k3},  \refL{l:susdist2ndmomentBD}
and $n\tgan^3\ge n\ga_n^3\to\infty$, see \ref{d:alphan}.
Then, by
the Burkholder--Davis--Gundy inequalities,
$\sup_{t \le \tgan t_0\wedge\iEnd} \abs{M_{\mathrm{R},t}}
= \op(n\tgan^2)$,
and \eqref{e:XRLLN} follows by \eqref{ika}.

\medskip

Finally, \eqref{e:XILLN}
follows from \eqref{e:XSLLN}, \eqref{e:XRLLN}, \eqref{e:XLLN},
and the fact that $\XI{t} = \X{t} - \XS{t} - \XR{t}$.
\end{proof}

\subsection{Duration of the time-changed epidemic}\label{s:duration}
We stated \refT{t:conc} using a rather arbitrary $\tgan$, but from now
on we fix it as follows. We distinguish between the cases $\nu<\infty$ and
$\nu=\infty$, and introduce some further notation:

If $0\le\nu<\infty$,
define
\begin{align}
  \tgan&:=\ga_n, \label{m1a}\\
  f(t)&:=\nu+t-\tfrac{\gliii}2t^2, \label{m1f}\\
\xl&:=\xxl. \label{m1x}
\end{align}
If $\nu=\infty$,
define instead
\begin{align}
  \tgan&:=\Bigpar{\sumk k\nIk/n}\qq, \label{m2a}\\
  f(t)&:=1-\tfrac{\gliii}2t^2, \label{m2f}\\
 \xl&:=\sqrt{2/\gliii}.\label{m2x}
\end{align}
Note that in both cases, $\xl$ is the unique positive root of $f$, and that
$f(t)>0$ on $(0,\xl)$ and $f(t)<0$ on $(\xl,\infty)$;
we have $f(0)=0$ if $\nu=0$ but $f(0)>0$ if $\nu>0$.
Note further that in the case $\nu=\infty$, $\tgan/\ga_n\to\infty$ by
\eqref{e:HI}; in particular, $\tgan\ge\ga_n$ except possibly for some small
$n$ that we will ignore. Moreover, $\tgan\to0$ by
\ref{d:alphan} ($\nu<\infty$) or
\ref{d:initinfective} ($\nu=\infty$).

Next, if $\nu<\infty$, then, by \eqref{e:HI},
\begin{equation}
  \sumk k\nIk = O\bigpar{\nS\ga_n^2}= O\bigpar{n\tgan^2},
\end{equation}
and if $\nu=\infty$, then by \eqref{m2a},
\begin{equation}
  \sumk k\nIk =  n\tgan^2.
\end{equation}
Hence, in both cases,
\begin{equation}\label{mk}
  \sumk k\nIk =  O\bigpar{n\tgan^2}.
\end{equation}
Furthermore, if $\nu=0$ then \eqref{e:HI} yields
$\sum_k k\nIk = o\bigpar{n\ga_n^2}$ and thus
\begin{equation}
  \sumk k^2\nIk
\le
 \Bigpar{ \sumk k\nIk}^2
= o\bigpar{n^2\tgan^4},
\end{equation}
and if $0<\nu\le\infty$ then \eqref{RdImax} and \eqref{mk} imply
\begin{equation}
  \sumk k^2\nIk
\le \dImax   \sumk k\nIk
=o \Bigpar{ \sumk k\nIk}^2
= o\bigpar{n^2\tgan^4}.
\end{equation}
Hence, \eqref{k2I} holds in all cases.

We have verified that our choice of $\tgan$ satisfies the conditions of
\refT{t:conc}, so
\refT{t:conc} applies.
We use this to show a more explicit limit result for $\XI{t}$.
\begin{lemma}\label{LXI}
  For any fixed $t_0$,
\begin{equation}\label{XIconc}
\sup_{t \le t_0 \wedge (\iEnd/\tgan)}
\Bigabs{\frac{\XI{\tgan t}}{n\tgan^2} - f(t)} \pto 0.
\end{equation}
\end{lemma}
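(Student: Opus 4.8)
The plan is to combine the concentration estimate \eqref{e:XILLN} from \refT{t:conc} with an asymptotic analysis of the deterministic function $\fXIn(\tgan t)/(n\tgan^2)$, showing it converges uniformly on $[0,t_0]$ to $f(t)$. First I would recall that \eqref{e:XILLN} gives
\begin{equation*}
\sup_{t\le t_0\wedge(\iEnd/\tgan)}\Bigabs{\XI{\tgan t}-\fXIn(\tgan t)} = \op(n\tgan^2),
\end{equation*}
so dividing by $n\tgan^2$, it suffices to prove
\begin{equation}\label{detconv}
\sup_{t\le t_0}\Bigabs{\frac{\fXIn(\tgan t)}{n\tgan^2}-f(t)}\to0.
\end{equation}
By \eqref{e:fXInt}, $\fXIn(\tgan t)=e^{-2\tgan t}\sumk k\nk - \fXSn(\tgan t)-\fXRn(\tgan t)$, so I would Taylor-expand each of the three pieces in the small parameter $\tgan t$, to second order (the quadratic term being the crucial one since $f$ is quadratic).

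For the first term, $e^{-2\tgan t}\sumk k\nk = \sumk k\nk\,(1-2\tgan t+2\tgan^2t^2+O(\tgan^3))$. For $\fXSn(\tgan t)=\sumk k\nSk e^{-k\tgan t}$, I would expand the exponential but need to be careful because $k$ ranges over all values: write $e^{-k\tgan t}=1-k\tgan t+\tfrac12k^2\tgan^2t^2-r_k$ where the remainder $r_k=O((k\tgan t)^3)$ and is controlled using \eqref{k3} (so $\sumk k\cdot k^3\nSk\le$ hmm, rather $\sumk k\nSk r_k\le \tgan^3t_0^3\sumk k^4\nSk$ — but the third moment bound only controls $\sumk k^3\nSk=O(n)$, so in fact I need the cruder bound $r_k\le \tfrac16(k\tgan t)^3$ only where it helps and otherwise use $|e^{-k\tgan t}-1+k\tgan t-\tfrac12k^2\tgan^2t^2|\le \tfrac16 k^3\tgan^3 t^3$ uniformly, giving error $O(\tgan^3\sumk k\cdot k^2\nSk)$; since $\sumk k^3\nSk=O(n)$ this is $O(n\tgan^3)=o(n\tgan^2)$). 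Collecting the linear and quadratic coefficients and using \eqref{glii2}, \eqref{gliii2}, one finds $\sumk k\nk-\fXSn(\tgan t)-\fXRn(\tgan t)$, after dividing by $n\tgan^2$, has the form $c_n + d_n t - e_n t^2 + o(1)$ uniformly on $[0,t_0]$, where $c_n=\XI0/(n\tgan^2)\to\nu$ (if $\nu<\infty$, by \eqref{e:HI}) or $\to1$ (if $\nu=\infty$, by \eqref{m2a}), $d_n\to1$, and $e_n\to\gliii/2$. Verifying the coefficients $d_n\to1$ and $e_n\to\gliii/2$ is where the definition of $\ga_n$ in \eqref{e:alphan} and the identity \eqref{aR0} get used: the linear coefficient is essentially $\tgan\qw\bigpar{\sumk k(k-1)\nSk/\nS - (1+\rho_n/\beta_n)\sumk k\nk/\nS}\cdot(\nS/n)$, which is $\tgan\qw\ga_n\cdot(\nS/n)$, equal to $1\cdot(1+o(1))$ since $\tgan=\ga_n$ when $\nu<\infty$ and $\nS/n\to1$; when $\nu=\infty$ one checks $\ga_n/\tgan\to0$ so the linear term's normalized coefficient still tends to $1$ after accounting for the $t$-scaling — here one must be slightly more careful and track that $\fXIn'(0)=\ga_n\nS+o(\cdot)$ while the normalization is $n\tgan^2$, giving coefficient $\ga_n\nS/(n\tgan)$; for $\nu=\infty$ this is $\ga_n/(\tgan)\cdot(1+o(1))\to0$?? — I'd need to re-examine; more likely the correct bookkeeping is that the \emph{linear in $t$} term of $\fXIn(\tgan t)/(n\tgan^2)$ has coefficient $\ga_n\nS/(n\tgan)$, which equals $1+o(1)$ in the case $\nu<\infty$ (since $\tgan=\ga_n$) and equals $\ga_n/\tgan\cdot(1+o(1))$ in the case $\nu=\infty$; but \eqref{m2f} has coefficient exactly $1$ on... wait, \eqref{m2f} is $1-\tfrac{\gliii}{2}t^2$ with \emph{no} linear term. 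So in the case $\nu=\infty$ the linear term should vanish in the limit, which is consistent with $\ga_n/\tgan\to0$ because $\tgan/\ga_n\to\infty$ by \eqref{e:HI}. Good — that is exactly why $f$ has no linear term when $\nu=\infty$.

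The quadratic coefficient $e_n$: from the first term we get $+2\,\sumk k\nk/(n\tgan^2)\cdot\tgan^2=2\sumk k\nk/n$... no — the $t^2$ coefficient of $e^{-2\tgan t}\sumk k\nk$ is $2\tgan^2\sumk k\nk$, which after dividing by $n\tgan^2$ is $2\sumk k\nk/n$; from $-\fXSn$ we get $-\tfrac12\tgan^2\sumk k^3\nSk$, giving $-\tfrac12\sumk k^3\nSk/n$ after normalization; combining with the $\rho_n/\beta_n$ contributions from $\fXRn$ and using \eqref{glii2}–\eqref{gliii2} and \eqref{nns}, these should consolidate to $-\gliii/2$. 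I expect the main obstacle to be precisely this algebra: showing the assorted second-order coefficients from the three expansions, together with the cross-terms, collapse to exactly $-\gliii/2$, using the relations among $\ga_n$, $\cR_0$, $\rho_n/\beta_n$ and the limiting factorial moments $\gl,\glii,\gliii$ established in \refR{Rrhobeta}. A secondary nuisance is ensuring all Taylor remainders are genuinely $o(1)$ after dividing by $n\tgan^2$; the worst remainder is $O(n\tgan^3)/(n\tgan^2)=O(\tgan)=o(1)$, so this is fine as long as the third-moment bound \eqref{k3} is invoked, but it must be checked uniformly in $t\in[0,t_0]$, which it is since $t_0$ is fixed. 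Once \eqref{detconv} is established, \eqref{XIconc} follows immediately by the triangle inequality, since $\op(1)+o(1)=\op(1)$.
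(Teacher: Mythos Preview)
Your overall strategy matches the paper's: apply \eqref{e:XILLN} from \refT{t:conc} to reduce to the deterministic statement that $\fXIn(\tgan t)/(n\tgan^2)\to f(t)$ uniformly on $[0,t_0]$, then Taylor-expand $\fXIn$ to second order. Your identification of the constant term (via \eqref{e:HI} or \eqref{m2a}), the linear coefficient $\fXIn'(0)=\nS\ga_n+o(n\tgan)$ (via \eqref{e:alphan}), and the case split $\nu<\infty$ versus $\nu=\infty$ (where $\ga_n/\tgan\to0$ kills the linear term) is correct and agrees with the paper.

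There is, however, a genuine gap in your remainder control. You write the second-order Taylor remainder of $e^{-k\tgan t}$ as $r_k=O((k\tgan t)^3)$, correctly note that multiplying by $k\nSk$ and summing yields $\tgan^3 t^3\sumk k^4\nSk$, and correctly observe that \eqref{k3} only controls the \emph{third} moment $\sumk k^3\nSk$. But your attempted fix is a restatement of the same bound, and the conclusion ``error $O(\tgan^3\sumk k\cdot k^2\nSk)$'' silently drops a factor of $k$: the sum genuinely needed is $\sumk k^4\nSk$, which is \emph{not} assumed to be $O(n)$. Thus, as written, the remainder is not shown to be $o(n\tgan^2)$.

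The paper resolves this by using the integral form of the remainder, $\tfrac12\int_0^{\tgan t}(\tgan t-u)^2\fXIn'''(u)\,du$, and bounding $\int_0^{\tgan t_0}|\fXIn'''(u)|\,du$ via truncation. The problematic term in $\fXIn'''$ is $\sumk k^4\nSk e^{-kt}$; one splits it at a level $M$. For $k\le M$ the sum is at most $M^4 n$, contributing $O(M^4 n\tgan)$ to the integral; for $k>M$ one uses $\int_0^{\tgan t_0}k e^{-kt}\,dt\le 1$ to bound the contribution by $\sum_{k>M}k^3\nSk$, which is $<\eps n$ by the uniform-integrability assumption \ref{d:cubeUI}. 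Letting $M\to\infty$ slowly (so $M^4\tgan\to0$) gives $\int_0^{\tgan t_0}|\fXIn'''|\,dt=o(n)$ and hence remainder $o(n\tgan^2)$. The same truncation idea could be grafted onto your termwise expansion (bounding the remainder by $Ck^2\tgan^2t^2$ for $k>M$), but it is not optional: the uniform integrability in \ref{d:cubeUI}, not merely the moment bound \eqref{k3}, is what makes the cubic remainder small.
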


\begin{proof}
The idea is to combine \refT{t:conc} with a Taylor expansion of $\fXIn(t)$
around zero.

The first three derivatives of $\fXIn(t)$ are
\begin{align}
\label{e:dHInt}
 \fXIn'(t) &= -2 e^{-2t} \sumk k\nk + \sumk k^2 \nSk e^{-kt} +
  \frac{\rho_n}{\beta_n}e^{-t}(1 - 2e^{-t})\sumk k\nk,
\\
\label{e:ddHInt}
 \fXIn''(t) &= 4 e^{-2t} \sumk k\nk - \sumk k^3 \nSk e^{-kt} -
  \frac{\rho_n}{\beta_n}e^{-t}(1 - 4e^{-t})\sumk k\nk,
\\
\label{e:dddHInt}
 \fXIn'''(t) &= -8 e^{-2t} \sumk k\nk + \sumk k^4 \nSk e^{-kt} +
  \frac{\rho_n}{\beta_n}e^{-t}(1 - 8e^{-t})\sumk k\nk.
\end{align}
Hence, using \eqref{e:alphan}, \eqref{nns}, \eqref{mk} and $\tgan\to0$,
\begin{align}
 \fXIn'(0) = -2  \sumk k\nk + \sumk k^2 \nSk
  -\frac{\rho_n}{\beta_n}\sumk k\nk
 = \nS\alpha_n - \sumk k\nIk = n\alpha_n + o(n\tgan),\label{e:dHInt0}
 \end{align}
and similarly, using also \eqref{gliii2},
 \begin{align}
 \fXIn''(0) &= 4 \sumk k\nk - \sumk k^3 \nSk  +
  3\frac{\rho_n}{\beta_n}\sumk k\nk \nonumber \\
    & = 3(1 + \rho_n/\beta_n) \sumk k\nk + \sumk k\nSk  - \sumk k^3 \nSk + \sumk k \nIk \nonumber \\
  & = 3(1 + \rho_n/\beta_n) \sumk k\nk - \sumk k(k-1)(k-2 + 3)\nSk + \sumk k\nIk \nonumber \\
  & = -3\nS\alpha_n - \sumk k(k-1)(k-2)\nSk + O(n\tgan^2)
=-n\gliii+o(n)
\label{e:ddHInt0}.
  \end{align}
Also, for $ t\ge 0$,
\begin{align*}
\abs{\fXIn'''(t)}
& \le (8 + 7\rho_n/\beta_n)\sumk k \nk
 +  \sumk k^4 \nSk e^{-kt} = O(n)  +  \sumk k^4 \nSk e^{-kt}.
\end{align*}
Hence,  for any $M \ge1$,
\begin{align*}
\int_0^{\tgan t_0}\abs{\fXIn'''(t)} \dd t
 & \le O(n\tgan) + \tgan t_0 \sum_{k \le M}  M^4\nSk
+ \sum_{k > M} k^3 \nSk \bigpar{1- e^{-k\tgan t_0}} \\
 & \le O(M^4 n\tgan) + \sum_{k > M} k^3 \nSk
 = o(n) + \sum_{k > M} k^3 \nSk.
\end{align*}
Letting $M \to \infty$ slowly (so that $M^4 \tgan =o(1)$), and using
\ref{d:cubeUI}, we obtain
\begin{equation}
\label{e:dddHIntBD}
\lim_{\ntoo} \frac{1}n \int_0^{\tgan t_0}\abs{\fXIn'''(t)} \dd t = 0.
\end{equation}

Now, by a Taylor expansion, for $t\ge0$,
\begin{equation}
\fXIn(\tgan t)
= \fXIn(0) + \fXIn'(0)\tgan t + \tfrac12\fXIn''(0)(\tgan t)^2
+ \tfrac12\int_0^{\tgan t}(\tgan t-u)^2\fXIn'''(u)\dd u,
\end{equation}
and hence,
using  $\fXIn(0)=\sum_k k\nIk$,
\eqref{e:dHInt0}, \eqref{e:ddHInt0} and \eqref{e:dddHIntBD},
uniformly in $t \le t_0$,
\begin{equation}\label{HITaylor}
\fXIn(\tgan t)
= \sumk k\nIk + n\alpha_n\tgan t - \tfrac12\tgan^2 t^2 n\gliii
+ o(n\tgan^2).
\end{equation}

If $\nu<\infty$, then $\tgan=\ga_n$, and
\eqref{HITaylor} yields by \eqref{e:HI} and \eqref{nns}
\begin{equation}
  \begin{split}
\frac{\fXIn(\tgan t)}{n\tgan^2} =
\nu + t - \tfrac12 t^2 \gliii + o(1)
=f(t)+o(1).
  \end{split}
\end{equation}
If $\nu=\infty$, then \eqref{HITaylor} yields similarly by \eqref{m2a} and
$\ga_n=o(\tgan)$,
\begin{equation}
  \begin{split}
\frac{\fXIn(\tgan t)}{n\tgan^2} =
1 - \tfrac12 t^2 \gliii + o(1)
=f(t)+o(1).
  \end{split}
\end{equation}
Consequently, in both cases
$
\xfrac{\fXIn(\tgan t)}{n\tgan^2}
=f(t)+o(1),
$ 
uniformly for $0\le t\le t_0$,
and the result follows by combining this and
\eqref{e:XILLN}
from \refT{t:conc}.
\end{proof}

We can now find (asymptotically) the duration $\iEnd$, except that when
$\nu=0$, we cannot yet say whether the epidemic is very small or rather large.

\begin{lemma}\label{l:iEnd}
\begin{enumerate}[\upshape(i)]
\item
If\/ $0<\nu\le\infty$, then
$\iEnd/\tgan\pto\xl$.
\item \label{liend0}
If $\nu=0$, then for
every $\eps > 0$,  \whp{}, either
 \begin{enumerate}[\upshape(a)]
 \item $0 \le \iEnd/\alpha_n < \eps$, or
 \item $\abs{\iEnd/\alpha_n - \xl}  < \eps$.
 \end{enumerate}
\end{enumerate}

In particular, in both cases, \whp{} $\iEnd\le 2\xl\tgan$.
\end{lemma}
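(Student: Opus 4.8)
The plan is to read the statement directly off \refL{LXI}, which tells us that the rescaled number of free infective half-edges $\XI{\tgan t}/(n\tgan^2)$ stays uniformly close to the deterministic curve $f$ up to time $t_0\wedge(\iEnd/\tgan)$. Recall from the discussion after \eqref{m2x} that $f$ has $\xl$ as its unique positive zero, is strictly positive on $(0,\xl)$ and strictly negative on $(\xl,\infty)$, with $f(0)=\nu$ if $\nu<\infty$ (and $f(0)=1$ if $\nu=\infty$); in particular $f(0)>0$ when $\nu>0$ but $f(0)=0$ when $\nu=0$. I will also use the two elementary facts that $\XI{t}\ge 0$ for every $t$ and that $\XI{\iEnd}=0$: the process $\XI{\cdot}$ is a nonnegative, right-continuous jump process, and $\iEnd<\infty$ a.s.\ since the time-changed epidemic terminates, so it equals $0$ at its first visit to $0$.

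First I would establish the upper bound $\iEnd/\tgan<\xl+\eps$ \whp, for each fixed small $\eps>0$, in both cases. Choose the auxiliary time $t':=\xl+\tfrac{\eps}{2}$, at which $\delta':=-f(t')>0$, and apply \refL{LXI} with $t_0:=\xl+\eps$: \whp{} the supremum in \refL{LXI} is at most $\delta'/2$. On that event, if $\iEnd/\tgan\ge\xl+\eps$ then $t'$ lies strictly below $\iEnd/\tgan$ and at most $t_0$, hence inside the range of the supremum, so $\XI{\tgan t'}/(n\tgan^2)<f(t')+\delta'/2=-\delta'/2<0$, contradicting $\XI{\tgan t'}\ge 0$. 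Next I would rule out intermediate values by the same device applied at $t_0:=\xl-\eps$. Let $I:=[0,\xl-\eps]$ if $\nu>0$ and $I:=[\eps,\xl-\eps]$ if $\nu=0$; on the compact set $I$ the continuous function $f$ has a strictly positive minimum $\delta$ (when $\nu=0$ simply because $f(t)=t(1-\tfrac{\gliii}{2}t)\ge\tfrac{\gliii\eps^2}{2}$ there). By \refL{LXI}, \whp{} the relevant supremum is at most $\delta/2$; on that event $\iEnd/\tgan\notin I$, since otherwise $t^*:=\iEnd/\tgan\in I\cap[0,t_0\wedge(\iEnd/\tgan)]$ and then $|\XI{\iEnd}/(n\tgan^2)-f(t^*)|=f(t^*)\ge\delta>\delta/2$, a contradiction.

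Putting the two parts together finishes the proof. If $0<\nu\le\infty$ then $I=[0,\xl-\eps]$, so for each fixed small $\eps>0$, \whp{} $\iEnd/\tgan\in(\xl-\eps,\xl+\eps)$, which is precisely $\iEnd/\tgan\pto\xl$. If $\nu=0$, then $\tgan=\ga_n$ and $I=[\eps,\xl-\eps]$, so \whp{} $\iEnd/\ga_n$ lies in $[0,\eps)\cup(\xl-\eps,\xl+\eps)$, i.e.\ alternative (a) or (b) in \ref{liend0} holds. The closing assertion follows on taking $\eps<\xl$, since both alternatives then give $\iEnd/\tgan<2\xl$. I do not anticipate a real difficulty: all of the substance lies in \refL{LXI} (and, underneath it, \refT{t:conc}), and this argument only harvests the hitting time. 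The two points that do need a little care are that the supremum in \refL{LXI} runs over the \emph{random} horizon $[0,t_0\wedge(\iEnd/\tgan)]$ — which is why each auxiliary time is taken strictly inside that interval on the event being analysed — and the straightforward sign analysis of $f$ at and just past $\xl$.
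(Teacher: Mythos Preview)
Your argument is correct and is essentially the same as the paper's: both use \refL{LXI} together with the sign structure of $f$ to locate $\iEnd/\tgan$. The paper packages the second step a little more economically --- having shown $\iEnd/\tgan\le 2\xl$ \whp, it simply evaluates \eqref{XIconc} at $t=\iEnd/\tgan$ to get $f(\iEnd/\tgan)\pto 0$ and then reads off the zeros of $f$ --- whereas you argue the contrapositive on an explicit interval $I$; the content is the same.
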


\begin{proof}
  Take $t_0=2\xl$. Then $f(t_0)<0$, so \eqref{XIconc} implies that
$\P(\iEnd/\tgan\ge t_0)\to0$, \ie,
$\iEnd< t_0 \tgan$ \whp.
Consequently, we may \whp{} take $t=\iEnd/\tgan$ in \eqref{XIconc} and
conclude
$\bigabs{\xfrac{\XI{\iEnd}}{n\tgan^2} - f(\iEnd/\tgan)} \pto 0$.
Since $\XI{\iEnd}=0$ by definition, this says $f(\iEnd/\tgan) \pto 0$.

Consider $f(t)$  for $t\in[0,\infty)$.
If $\nu>0$, then $f(t)$ has  a unique zero at $\xl$, and is bounded away
from 0 outside every neighbourhood of $\xl$; hence $\iEnd/\tgan\pto\xl$ follows.
If $\nu=0$, $f(t)=0$ both for $t=0$ and $t=\xl$, and \eqref{liend0} follows.
\end{proof}

\begin{remark}\label{Rt1}
\refL{l:iEnd} shows that taking $t_0 \= 2\xl$ in \refT{t:conc}, we have \whp{}
  $\tgan t_0\wedge \iEnd=\iEnd$, and thus, for $\tgan$ as above,
\refT{t:conc} holds also
with  the suprema taken over all $t\le\iEnd$.
\end{remark}

\subsection{Final size}

\begin{proof}[Proof of \refT{t:criticalSIR}]
Recall that $\fs_k \= \nSk - \Svk{\iEnd}$ is the number of susceptibles of
degree $k$ that ever become infected, and $\fs=\sum_k\fs_k$.
For each $k \in {\mathbb Z}^+$,
\begin{align}\label{koo}
\left|\frac{\fs_k}{n\tgan} - k p_k \frac{\iEnd}{\tgan} \right|
& \le
 \left|\frac{\nSk(1- e^{-k \iEnd})}{n\tgan} - k p_k \frac{\iEnd}{\tgan}\right|
 +  \left|\frac{\Svk{\iEnd} - \nSk e^{-k \iEnd}}{n\tgan}\right|.
\end{align}
Since $\abs{1 - e^{-y}-y} \le y^2$ for all $y \ge 0$, and using
\ref{d:asympsuscdist}, \ref{d:cubeUI} and \eqref{nns},
\begin{align*}
\sumk \Bigabs{\frac{(1 - e^{-k \tgan t})\nSk}{n\tgan} - kp_k t}
& \le t \sumk k\abs{p_k - \nSk/n} + \tgan t^2\sumk \frac{\nSk k^2}{n}
 \to 0,
\end{align*}
uniformly in $t \le t_0 \=  2\xl$. Since $\iEnd/\tgan\le t_0$ \whp{} by
\refL{l:iEnd}, it follows that
$$\sumk \left|\frac{\nSk(1- e^{-k \iEnd})}{n\tgan} - k p_k
\frac{\iEnd}{\tgan}\right| = \op (1).
$$
Further, by \eqref{e:SLLN} in \refT{t:conc} and Lemma~\ref{l:iEnd}, see
\refR{Rt1},
$$\sumk \left|\frac{\Svk{\iEnd} - \nSk e^{-k \iEnd}}{n\tgan}\right| = \op (\tgan) = \op (1).$$
It follows by \eqref{koo} that
\begin{equation}\label{e:Zkdev}
 \sumk \left|\frac{\fs_k}{n\tgan} - k p_k \frac{\iEnd}{\tgan} \right| = \op(1)
\end{equation}
and, in particular,
\begin{equation}\label{Zdev}
 \left|\frac{\fs}{n\tgan} - \gl \frac{\iEnd}{\tgan} \right| =
 \left| \sumk\lrpar{\frac{\fs_k}{n\tgan} - k p_k \frac{\iEnd}{\tgan}} \right|
= \op(1).
\end{equation}
The estimate \eqref{Zdev} and \refL{l:iEnd}
yield the conclusions
\ref{tsizei}--\ref{tsizeiii} by \eqref{nns} and the definitions
of $\tgan$ and $\xl$ in \eqref{m1a}--\eqref{m2x}.
For \ref{tsizei}, when $\nu=0$, we first obtain that
if $\eps_n=\eps>0$ is fixed but arbitrary,
then the conclusion holds \whp, and it is easy
to see that this implies the same for some sequence $\eps_n\to0$.
(Note also that if $\nu=0$, then $\xl=2/\gliii$.)

Furthermore,
combining \eqref{e:Zkdev} and \eqref{Zdev}, we obtain
\begin{equation}\label{Zkdev}
\sumk \left|\frac{\fs_k}{n\tgan} - \frac{k p_k}{\gl} \frac{\fs}{n\tgan} \right|
 = \op(1).
\end{equation}
We have shown that, except in the case \ref{tsizei}\ref{tsizei:small},
there exists $\eps>0$ such that \whp{}
$\fs/n\tgan\ge\eps$; then \eqref{t2ab} follows from \eqref{Zkdev}.
\end{proof}

\section{Proof of \refT{t:takeoff}}\label{Spftakeoff}
We continue to use the simplifying assumptions in \refS{s:simplify}.
We consider the epidemic in the original time scale and construct it
from independent exponential random variables.
At time $t=0$, we allocate each of the $\nI$ initially infective vertices an
$\Exp(\rho_n)$ recovery time.  We also give each free infective half-edge at
time 0 an $\Exp(\beta_n)$
pairing time.  If the pairing time for a \fihe{} is less than the recovery
time of its parent vertex, then we colour that \fhe{} red.  Otherwise, we
colour it black.  We now wait until the first recovery or pairing time.
At a recovery time, we change the
status of the corresponding vertex to recovered.  At a pairing time of a
red \fhe{}, we choose another \fhe{} uniformly at random.  If the
chosen \fhe{} belongs to a susceptible vertex then that vertex becomes
infective, is
given an $\Exp(\rho_n)$ recovery time, and its
remaining \fhe{}s are given independent $\Exp(\beta_n)$ pairing times.
Then, as above,
we colour red any \fhe{} with pairing time less than recovery time,
and colour black all other free half-edges at the chosen vertex.
The process continues in this
fashion until
no red \fhe{}s remain.  Note that we do nothing at the pairing time
of a black \fhe{}, since it is no longer infective, and so black
\fhe{}s behave like recovered \fhe{}s.  Also, a red \fhe{} will
definitely initiate a pairing event at some point (provided it has not been
chosen by another red \fhe{} first).
However, ignoring the colourings we obtain the same process as
before.

Let $Z_t$ be the number of red \fhe{}s at time $t \ge 0$. Note that $Z_t$
changes only at pairing events, but not at recovery times. (The point of the
colouring is to anticipate the recoveries, which then can be ignored.)
Further, let $\Zm:=Z_{T_m}$, where $T_m$ is the time of the $m$:th pairing
event (and $T_0:=0$), and let $\zeta_m:=\Delta \Zm:=\Zm-\Zx{m-1}$.
(Note that our processes are all right continuous, so $\Zm$ is the number
of red \fhe{}s immediately after the $m$-th pairing has occurred and we have
coloured any new infective \fhe{}s.)
Thus the process stops at $T_{\mx}$, where $\mx:=\min\set{m\ge0:\Zm=0}$.
(This is not exactly the same stopping condition as used earlier, but the
difference does not matter; there may
still be some infective half-edges, but they are black and will recover
before infecting any more vertex.)
Let $\cF_m =\cF(T_m)$
be the corresponding discrete-time filtration generated by the
coloured SIR process
up to time $T_m$.

We keep the same notation as before
for the \fhe{} counts (so the total number of free infective half-edges,
whether red or black, is $\XI{t} \ge Z_t$, for example), and write
again $S_t(k)$
for the number of susceptible vertices with $k$ \fhe{s} at time $t \ge 0$.
Furthermore, define
\begin{equation}\label{pin0}
  \pi_n:=\frac{\gb_n}{\gb_n+\rho_n},
\end{equation}
the probability that a given free infective half-edge is coloured red.
Note that $c\le\pi_n\le1$ for some $c>0$ by \eqref{rhobeta}.

We begin by showing that a substantial fraction of the initially infective
half-edges are red.
Recall that $\XI0=\sumk k \nIk$ is the total degree of the initially
infective vertices and that $\dImax$ is the maximum degree among these
vertices.

\begin{lemma}\label{l:alphaZ0}
Suppose that\/ $\XI0 \to \infty$.
\begin{romenumerate}
\item \label{l:alphaZ01}
If $\dImax=o(\XI0)$, then
$Z_0=\pi_n \XI0\bigpar{1+\op(1)}$.

\item \label{l:alphaZ02}
More generally, for any $\dImax$,
we have
\begin{equation}\label{e:Z0Plb}
\lim_{\delta \to 0} \limsup_{n\to\infty}
\Prob\Bigpar{Z_0 \le \delta \XI0} = 0.
\end{equation}
\end{romenumerate}
\end{lemma}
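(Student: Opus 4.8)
The plan is to work entirely within the coloured construction introduced at the start of this section. At time $0$ each of the $\nI$ initially infective vertices $v$ (of degree $d_v$) receives an independent recovery time $R_v\sim\Exp(\rho_n)$, and, conditionally on $R_v$, the $d_v$ free infective half-edges at $v$ are coloured red independently each with probability $p_v:=1-e^{-\beta_n R_v}$. Hence, conditionally on $\mathbf R:=(R_v)_v$, the number $N_v$ of red half-edges at $v$ has the $\Bin(d_v,p_v)$ distribution, these variables are independent over $v$, and $Z_0=\sum_v N_v$. The variables $(p_v)_v$ are \iid{} with $\E p_v=1-\E e^{-\beta_n R_v}=\beta_n/(\rho_n+\beta_n)=\pi_n$ by \eqref{pin0}, and we recall $\pi_n\ge c>0$ and $\rho_n/\beta_n\le C$ for constants $c,C$ (with $C\ge1$), by \eqref{rhobeta}. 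Finally $\sum_v d_v=\XI0$ and $\sum_v d_v^2=\sumk k^2\nIk\le\dImax\XI0\le\XI0^2$.

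For part \ref{l:alphaZ01} I would just compute the first two moments. One has $\E Z_0=\pi_n\XI0$, and, since the $N_v$ are independent with $\Var(N_v)\le\E N_v^2\le d_v+d_v^2\le 2d_v^2$, also $\Var(Z_0)\le 2\sum_v d_v^2\le 2\dImax\XI0$. Chebyshev's inequality together with $\pi_n\ge c$ then gives, for each $\eps>0$,
\[
\Prob\bigpar{\abs{Z_0-\pi_n\XI0}>\eps\pi_n\XI0}\le\frac{2\dImax\XI0}{c^2\eps^2\XI0^2}=\frac{2\dImax}{c^2\eps^2\XI0}\longrightarrow0
\]
under the hypothesis $\dImax=o(\XI0)$, which is exactly $Z_0=\pi_n\XI0(1+\op(1))$.

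For part \ref{l:alphaZ02} the difficulty is that $\dImax$ may be of the same order as $\XI0$, so $\sum_v d_v^2$ need not be $o(\XI0^2)$ and the crude variance bound above no longer controls $Z_0$ at scale $o(\XI0)$. I would argue in two stages, conditioning on $\mathbf R$. Put $\Lambda:=\E[Z_0\mid\mathbf R]=\sum_v d_v p_v$, so $\Var(Z_0\mid\mathbf R)=\sum_v d_v p_v(1-p_v)\le\Lambda$. Fix a small $\eps>0$. Since $\Prob(p_v<\eps)=1-(1-\eps)^{\rho_n/\beta_n}\le 1-(1-\eps)^{C}\le C\eps$, the random variable $W:=\sum_v d_v\ett{p_v\ge\eps}$ has $\E W\ge(1-C\eps)\XI0$ and, using $\sum_v d_v^2\le\XI0^2$, $\Var(W)\le C\eps\sum_v d_v^2\le C\eps\XI0^2$; hence by Chebyshev $W\ge\tfrac14\XI0$ (so $\Lambda\ge\eps W\ge\tfrac14\eps\XI0$) outside an event of probability at most $4C\eps/(1-C\eps)^2$, for $\eps$ small. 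On the complementary event, which is $\mathbf R$-measurable, a further application of Chebyshev conditionally on $\mathbf R$ gives
\[
\Prob\bigpar{Z_0\le\tfrac18\eps\XI0\mid\mathbf R}\le\frac{\Var(Z_0\mid\mathbf R)}{(\tfrac12\Lambda)^2}\le\frac4\Lambda\le\frac{16}{\eps\XI0}.
\]
Combining, $\Prob(Z_0\le\tfrac18\eps\XI0)\le 4C\eps/(1-C\eps)^2+16/(\eps\XI0)$; letting $n\to\infty$ (here the hypothesis $\XI0\to\infty$ enters) and then $\eps\to0$, with $\delta=\eps/8$, yields \eqref{e:Z0Plb}.

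The main obstacle is the point just flagged in part \ref{l:alphaZ02}: since $\sum_v d_v^2$ can be as large as $\XI0^2$, the first Chebyshev step produces only a probability bound of order $\eps$, not one vanishing in $n$. This is, however, precisely what the double-limit form of \eqref{e:Z0Plb} tolerates, so no further effort is required there; every other estimate above is a routine second-moment computation.
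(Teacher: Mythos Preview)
Your proof is correct. Part \ref{l:alphaZ01} is essentially the paper's argument: both compute $\E Z_0=\pi_n\XI0$, bound the variance by $O(\dImax\XI0)$ via $\sum_v d_v^2\le\dImax\XI0$, and apply Chebyshev.

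For part \ref{l:alphaZ02} you take a genuinely different route. The paper splits into two cases according to whether $\dImax\le\delta^{1/2}\XI0$ or not. In the first case it reuses the variance bound $\Var Z_0\le\dImax\XI0\le\delta^{1/2}\XI0^2$ and Chebyshev; in the second it fixes a single vertex of maximal degree $\dImax\ge\delta^{1/2}\XI0$, and shows directly (by conditioning on whether its recovery time exceeds $4\delta^{1/2}/\beta_n$) that this one vertex already contributes more than $\delta^{1/2}\dImax\ge\delta\XI0$ red half-edges except with small probability. Your argument instead avoids any case distinction by a two-level second-moment scheme: first a Chebyshev bound on $W=\sum_v d_v\ett{p_v\ge\eps}$ to show the conditional mean $\Lambda=\E[Z_0\mid\mathbf R]$ is $\ge\tfrac14\eps\XI0$ with probability $1-O(\eps)$, and then, conditionally on $\mathbf R$, the Poisson-type bound $\Var(Z_0\mid\mathbf R)\le\Lambda$ plus Chebyshev. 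Your approach is more uniform and arguably cleaner; the paper's case split is perhaps more transparent about where the mass comes from when a single vertex dominates. Both yield bounds of the form $O(\delta^{1/2})+o(1)$ (respectively $O(\eps)+O(1/(\eps\XI0))$), which is exactly what the double limit in \eqref{e:Z0Plb} requires.
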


\begin{proof}
We enumerate all initially infective vertices as $i = 1, \ldots, \nI$, and let
$\dIi$ be the degree of vertex $i$, so that $\XI0 = \sum_{i = 1}^{\nI} \dIi$.
We also let $Z_{0,i}$ be the number of red \fhe{}s
at vertex $i$, so  $Z_0 = \sum_{i = 1}^{\nI} Z_{0,i}$, where the $Z_{0,i}$
are independent, with $\E Z_{0,i} = \dIi\pi_n $
and $Z_{0,i} \le \dIi$.
It follows that $\E Z_0=\sum_{i = 1}^{\nI}\E Z_{0,i} =\pi_n\XI0$ and
\begin{equation}\label{varZ0}
\Var Z_0 = \sum_{i = 1}^{\nI} \Var Z_{0,i} \le \sum_{i = 1}^{\nI} \dIi^2
 \le \dImax \XI0.
\end{equation}

\pfitemref{l:alphaZ01}
If $\dImax=o(\XI0)$, then \eqref{varZ0} yields
$\Var Z_0=o(\XI0^2)=o((\E Z_0)^2)$, and thus Chebyshev's inequality
yields $Z_0=\E Z_0(1+\op(1))$.

\pfitemref{l:alphaZ02}
Take any $\delta > 0$ with $\gd<\frac12\min_n \pi_n$.

We assume first that $\dImax \le \delta^{1/2} \XI0$.
Then \eqref{varZ0} and Chebyshev's inequality yield
\begin{equation}\label{hk3}
  \P(Z_0\le\gd\XI0)
\le \frac{\Var Z_0}{(\E Z_0-\gd\XI0)^2}
\le \frac{\dImax\XI0}{(\frac12\pi_n\XI0)^2}
\le 4\pi_n^{-2}\gd\qq
= 4\Bigpar{\frac{\rho_n+\gb_n}{\gb_n}}^{2}\gd\qq.
\end{equation}

Assume now instead that $\dImax \ge \delta^{1/2} \XI0$.
Fix one initially infective vertex of degree $\dImax$,
let $\Zzs$ be the number of red \fhe{}s at that vertex,
and let $R_\star$ be its recovery time.
Then $Z_0 \ge \Zzs$, and so $Z_0 \le \delta \XI0$ implies
that $\Zzs \le \delta^{1/2} \dImax$.  We have
\begin{align}
\Prob(\Zzs \le \delta^{1/2} \dImax) & = \Prob(\Zzs \le \delta^{1/2}\dImax, R_\star \le 4\delta^{1/2}/\beta_n ) +
\Prob(\Zzs \le \delta^{1/2} \dImax, R_\star > 4\delta^{1/2}/\beta_n )
\notag\\
& \le \Prob( R_\star \le 4\delta^{1/2}/\beta_n )
+ \Prob\bigpar{\Zzs \le \delta^{1/2}\dImax
\mid R_\star > 4\delta^{1/2}/\beta_n}.
\label{hk0}
\end{align}
Now,
\begin{equation}\label{latte}
\Prob( R_\star \le 4\delta^{1/2}/\beta_n )
= 1 - e^{-4\delta^{1/2}\rho_n/\beta_n}
\le 4\delta^{1/2}\rho_n/\beta_n.
\end{equation}
Also, conditional on $R_\star = r$, $\Zzs$ has a binomial distribution with
parameters $\dImax$ and $1-e^{-\beta_n r}$.  It follows that, conditional on
$R_\star > 4\delta^{1/2}/\beta_n$, $\Zzs$ stochastically
dominates a $\Bin(\dImax, 1 - e^{-4\delta^{1/2}} )$ random variable.
For $\delta$ small enough, 
$1 - e^{-4\delta^{1/2}} \ge 2 \delta^{1/2}$, and so, by
Chebyshev's inequality,
\begin{align}\label{hk1}
\Prob\Bigpar{\Zzs \le \delta^{1/2} \dImax \mid R_\star >
  \frac{4\delta^{1/2}}{\beta_n}}
& \le \Prob\Bigpar{\Bin(\dImax, 2\delta^{1/2}) \le \delta^{1/2} \dImax }
\le \frac{2\gd\qq\dImax}{\gd\dImax^2}
\le \frac{2}{\delta \XI0}.
\end{align}

Combining  \eqref{hk3}, \eqref{hk0}, \eqref{latte} and \eqref{hk1}, we see
that if $\gd$ is
small, then in both cases
\begin{equation*}
\Prob \bigpar{Z_0 \le \delta \XI0 }
\le
 4\Bigpar{\frac{\rho_n+\gb_n}{\gb_n}}^{2}\gd\qq
+ \frac{2}{\delta \XI0},
\end{equation*}
and \eqref{e:Z0Plb} follows, since
$\XI0\to\infty$ and
$\rho_n/\gb_n=O(1)$ by \eqref{rhobeta}.
\end{proof}

\begin{lemma}\label{l:RWdrift}
Let $(W_m)_{m=0}^\infty$ be a process adapted to a filtration $(\cF_m)_{m =
  0}^\infty$, with $W_0 = 0$, and let $\tau\le\infty$ be a stopping time.
Suppose that the positive numbers $v,w > 0$ are such that
\begin{align}
\E[ \Delta W_{m+1}\mid\cF_m] &\ge v \quad\text{a.s.~on } \set{m<\tau},
\label{e:RW0}
\\
\E[(\Delta W_{m+1})^2] &\le w \label{e:RW2}
\end{align}
for every $m \ge 0$.
Then, for any $b>0$,
\begin{equation}\label{e:lRWbdgoal}
\Prob\left(\inf_{0\le m \le \tau} W_m \le -b\right) \le \frac{8 w}{bv}.
\end{equation}
\end{lemma}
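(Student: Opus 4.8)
The plan is to push the drift into a deterministic lower bound by taking a Doob decomposition of the \emph{stopped} process $\widehat W_m:=W_{m\wedge\tau}$, so that the bad event forces the martingale part of $\widehat W$ to cross a \emph{linear} barrier; that crossing probability is then bounded by splitting time into dyadic blocks and applying Doob's maximal inequality on each block. The only genuinely non-obvious point is this last step: a naive union bound over $m$ diverges.

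First, write $\widehat W_m=\widehat A_m+\widehat M_m$, where $\widehat A_m=\sum_{j=1}^m\E[\Delta\widehat W_j\mid\cF_{j-1}]$ is predictable with $\widehat A_0=0$ and $\widehat M$ is a martingale with $\widehat M_0=0$. Since $\Delta\widehat W_j=\ett{j\le\tau}\,\Delta W_j$ and $\ett{j\le\tau}$ is $\cF_{j-1}$-measurable, \eqref{e:RW0} gives $\E[\Delta\widehat W_j\mid\cF_{j-1}]\ge v\,\ett{j\le\tau}$, so $\widehat A_m\ge v(m\wedge\tau)$ for every $m$; and $\E[(\Delta\widehat M_j)^2]=\E[\Var(\Delta\widehat W_j\mid\cF_{j-1})]\le\E[(\Delta W_j)^2]\le w$ by \eqref{e:RW2}, so by orthogonality of the $\Delta\widehat M_j$ we get $\E[\widehat M_m^2]\le mw$ for all $m$. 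Now on the event $\bigcpar{\inf_{0\le m\le\tau}W_m\le-b}$ the first passage time $\sigma:=\inf\{m:W_m\le-b\}$ satisfies $1\le\sigma\le\tau$ (recall $W_0=0$), hence $\widehat W_\sigma=W_\sigma\le-b$ while $\widehat A_\sigma\ge v\sigma$, and therefore $-\widehat M_\sigma=\widehat A_\sigma-\widehat W_\sigma\ge b+v\sigma$. This shows
\[
\Bigcpar{\inf_{0\le m\le\tau}W_m\le-b}\ \subseteq\ \bigcup_{m\ge1}\bigcpar{-\widehat M_m\ge b+vm}.
\]

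It remains to bound $\P$ of the right-hand side. Grouping $m$ into the blocks $\{2^k,\dots,2^{k+1}-1\}$, $k\ge0$, and noting that on the $k$-th block $-\widehat M_m\ge b+vm$ forces $-\widehat M_m\ge b+v2^k$, Doob's maximal inequality for the martingale $\widehat M$ gives that the $k$-th block event has probability at most $\E[\widehat M_{2^{k+1}-1}^2]/(b+v2^k)^2\le 2^{k+1}w/(b+v2^k)^2$. Writing $\tfrac{2^{k+1}}{(b+v2^k)^2}=\tfrac2v\cdot\tfrac{v2^k}{(b+v2^k)^2}$ and using $\tfrac{x}{(b+x)^2}\le\min\{x/b^2,\,1/x\}$, the sub-sum over $k$ with $v2^k<b$ and the sub-sum over $k$ with $v2^k\ge b$ are each geometric and bounded by $\tfrac2b$, so
\[
\P\Bigpar{\inf_{0\le m\le\tau}W_m\le-b}\ \le\ \sum_{k\ge0}\frac{2^{k+1}w}{(b+v2^k)^2}\ \le\ \frac{2w}{v}\cdot\frac4b\ =\ \frac{8w}{bv}.
\]

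The main obstacle is precisely the convergence of this last series: bounding each $\P(-\widehat M_m\ge b+vm)$ individually by Chebyshev gives $\sum_m mw/(b+vm)^2=\infty$, so one must combine the dyadic blocking with Doob's inequality, the doubling of the block lengths exactly absorbing the linear growth of the barrier and of $\E[\widehat M_m^2]$. Everything else---verifying $\widehat A_m\ge v(m\wedge\tau)$ and $\E[(\Delta\widehat M_j)^2]\le w$ for the stopped process, and the elementary two-regime estimate of the geometric series---is routine.
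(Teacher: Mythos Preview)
Your proof is correct and follows essentially the same strategy as the paper: Doob decomposition to separate the drift from a martingale, then dyadic blocking combined with Doob's $L^2$ maximal inequality, and the same two-regime geometric estimate for the resulting series. The only cosmetic difference is that you decompose the stopped process $\widehat W_m=W_{m\wedge\tau}$ while the paper decomposes $W$ itself and restricts to $m\le\tau$ afterwards; this changes nothing of substance.
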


\begin{proof}
Consider the Doob decomposition
\begin{equation}
W_m = M_m + A_m,
\end{equation}
where $A_m \= \sum_{l=1}^m \E[\Delta W_{l}\mid \cF_{l-1}]$ is predictable and
$M_m \= W_m - A_m$ is a martingale with respect to $(\cF_m)$.
By the assumption \eqref{e:RW0}, $A_m\ge mv$ \as{} when $m\le\tau$.
Furthermore,
\begin{align*}
\E[\gD M_m^2]  &= \E[(\Delta W_m - \E[\Delta W_m \mid\cF_{m-1}] )^2]
 = \E[(\Delta W_m)^2] - \E(\E[\Delta W_m \mid\cF_{m-1}])^2
 \le w.
\end{align*}
Thus, by Doob's inequality, for any $N\ge1$,
\begin{equation}
  \begin{split}
\Prob\Bigpar{\inf_{N\le m \le (2N) \wedge\tau} W_m \le -b}
\le
\Prob\Bigpar{\inf_{N\le m \le 2N} M_m \le -b-Nv}
\le \frac{\E[M_{2N}^2]}{(b+Nv)^2}
\le \frac{2N w}{(b+Nv)^2}.
  \end{split}
\end{equation}
Summing over all powers of 2, we obtain
\begin{equation*}
  \begin{split}
\Prob\Bigpar{\inf_{1\le m \le \tau} W_m \le -b}
&\le \sumk
\Prob\Bigpar{\inf_{2^k\le m \le 2^{k+1}\wedge\tau} W_m \le -b}
\le
\sumk \frac{2^{k+1} w}{(b+2^kv)^2}
\\&
\le \sum_{2^k \le b/v} \frac{2^{k+1} w}{b^2}
+ \sum_{2^k \ge b/v} \frac{2^{k+1} w}{2^{2k}v^2}
\le \frac{4(b/v)w}{b^2}+\frac{4(v/b)w}{v^2}=\frac{8w}{bv}.
\qedhere
  \end{split}
\end{equation*}
\end{proof}

\begin{proof}[Proof of Theorem~\ref{t:takeoff}\ref{takeoff2}.]
Let $\deps > 0$ be a small positive number chosen later. Define the discrete
stopping time $\mxx$
by
\begin{equation}
  \label{mxx}
\mxx:=\min\Bigcpar{m\ge0: \Zm=0 \quad\text{or}\quad
\sumk k^2 (\nSk -  S_{T_m}(k)) >  \deps n \alpha_n}.
\end{equation}
Note that for $m<\mxx$,
the total number of \fhe{s} at time $T_m$ is
\begin{equation}
  \begin{split}
\X{T_m}
\ge \sumk kS_{T_m}(k)
\ge \sumk k\nSk - \deps n\ga_n
\ge \sumk k\nk - \deps n\ga_n -o(n\ga_n),
  \end{split}
\end{equation}
since $\sumk k\nIk=o(\nS\ga_n^2)=o(n\ga_n)$ by \eqref{e:HI} and
\eqref{e:alphanasy}.
Similarly,
for $m<\mxx$,
\begin{equation}\label{mxxz}
 \Zm=Z_{T_m}
\le Z_0 + \sumk k (\nSk -  S_{T_m}(k))
\le Z_0 +  \deps n\alpha_n
\le \deps n\alpha_n + o(n\ga_n),
 \end{equation}
since $Z_0$ is bounded above by $\sumk k\nIk = o(\nS \alpha_n^2)
= o(n\alpha_n)$. 

At a pairing $m +1 \le \mxx$, a red free half-edge pairs with a free
susceptible half-edge, or with another red free half-edge, or with a black
half-edge.
In the first case, if the susceptible half-edge belongs to a vertex of
degree $k$, we get on the average $\pi_n (k-1)$ new red \fhe{s}; in the
second case we instead lose one red \fhe, in addition to the pairing
red \fhe{} that we always lose.
The probability of pairing with a susceptible half-edge belonging to a vertex
of degree $k$ is $kS_{T_m}(k)/X_{T_m}$ and the probability of pairing with
another red \fhe{} is $\Zm/X_{T_m}$.
Hence, for $m+1 \le \mxx$, using \eqref{mxx}--\eqref{mxxz},
\eqref{pin0} and the definition
\eqref{R0} of $\cR_0$,
\begin{align*}
\E[\Delta \Zx{m+1}\mid\cF_m]
&\ge
-1 + \pi_n 
\frac{\sumk (k-1)k S_{T_m}(k)}{\sumk k\nk}
- \frac{\Zm}{\sumk k\nk - (\deps+o(1)) n \alpha_n}\\
& \ge -1 + \pi_n 
\frac{\sumk (k-1)k\nSk-\deps n\ga_n}{\sumk k\nk}
-\frac{(\deps+o(1))n\ga_n}{\sumk k\nk- (\deps+o(1)) n \alpha_n}\\
& \ge -1  + \Rzero - O(\deps \alpha_n).
\end{align*}
Since $(\Rzero-1)\alpha_n^{-1}$ is bounded away from 0 by \eqref{aR0}
and \refR{Rrhobeta},
this shows that as long as $\deps$ is chosen small enough
there exists some $\cc > 0\ccdef\cczmi$ such that
if  $n$ is large and $m< \mxx$, then
\begin{equation}\label{ezmi}
\E[\Delta \Zmi\mid\cF_m] \ge \ccx \alpha_n.
\end{equation}
Furthermore,
noting that the number of red free half-edges may change by at most $k$ at a
jump if a red free half-edge pairs with a free susceptible half-edge at a
vertex of degree $k$, the expected square of any jump satisfies,
for $m<\mxx$,
\begin{align}\label{ezmi2}
\E[ (\Delta \Zmi)^2\mid \cF_m ]
\le 4 + \frac{\sumk k^3\nSk}{\sumk k\nk-(\deps+o(1))n\ga_n} \le \cc,
\ccdef\cczmii
\end{align}
for some $\ccx> 0$, uniformly in all large $n$, by assumption \ref{d:cubeUI}.

Let $W_m = \Zx{m\wedge\mxx}-Z_0$.
It follows from \eqref{ezmi}--\eqref{ezmi2} that \refL{l:RWdrift} applies
with $\tau=\mxx$, $v=\cczmi\ga_n$ and $w=\cczmii$.

Let $a=a_n> 0$ satisfy $a_n \to \infty$ and
$a_n =o\bigpar{\alpha_n \sumk k \nIk}$
as $n \to \infty$.
Then \refL{l:RWdrift} with $b=a_n/\ga_n$ yields
\begin{equation}
\Prob\lrpar{\inf_{0\le m \le \mxx} W_m \le -a_n/\ga_n} \le
\frac{8 \ccx}{\cczmi a_n} =o(1)
\end{equation}
and thus $W_{\mxx}>-a_n/\ga_n$ \whp.
On the other hand, \refL{l:alphaZ0}\ref{l:alphaZ02}
implies that $\P(Z_0\le a_n/\ga_n)\to0$.
Consequently, $\Zx{\mxx}=W_{\mxx}+Z_0>0$ \whp.
By \eqref{mxx}, this means that \whp{}
$\sumk k^2 (\nSk -  S_{T_{\mxx}}(k)) >  \deps n \alpha_n$, and thus that,
for some time $t$ before the epidemic dies out,
$\sumk k^2 (\nSk -  S_{t}(k)) >  \deps n \alpha_n$.

The latter statement does not depend on the time-scale, so it holds for the
time-changed epidemic in \refS{s:acceleratedepidemic} too.
Thus, using the notation there, by the monotonicity of the number of
susceptibles,  \whp{}
\begin{equation}\label{paddington}
 \sumk k^2 (\nSk -  S_{\iEnd}(k)) >  \deps n \alpha_n.
\end{equation}

On the other hand, \refL{l:susdist2ndmomentBD} (with $\tgan=\ga_n$)
and \eqref{k3} yield, for any $\eps>0$,
\begin{equation}\label{padda}
  \begin{split}
\sup_{0 \le t \le \eps \alpha_n \wedge \iEnd }\sumk k^2(\nSk - \Svk{t})
& \le \sup_{t\le \eps \alpha_n \wedge \iEnd}
  \Bigabs{\sumk k^2(\Svk{t} - \nSk e^{-k t})}
+ \eps \alpha_n \sumk k^3 \nSk \\
& \le \op (n \alpha_n)+ \eps c_0 n \alpha_n
.
  \end{split}
\end{equation}

Consequently, if we first choose $\deps>0$ so small that \eqref{paddington}
holds, and then $\eps<\deps/c_0$, then \eqref{paddington}--\eqref{padda}
imply that \whp{} $\iEnd>\eps\ga_n$.
It follows by \refL{l:iEnd} that
$\iEnd/\alpha_n \pto \xl=2/\gliii$, and thus the result follows by \eqref{Zdev}.
\end{proof}

To study the cases \ref{takeoff1} and \ref{takeoff3}, we analyse the number of
red \fhe{s} more carefully.
Let the random variable $Y(k)$ be the number of new red free half-edges
when a vertex of degree $k$ is infected. Given the recovery time $\tau$ of
the vertex, $Y(k)\sim\Bin\bigpar{k-1, 1-e^{-\gb_n\tau}}$, and, since
$\tau\sim\Exp(\rho_n)$, the probability $1-e^{-\gb_n\tau}$ has the Beta
distribution $B(1,\rho_n/\gb_n)$. Consequently, $Y(k)$ has the
beta-binomial distribution with parameters $(k-1,1,\rho_n/\beta_n)$.
More generally, if $D$ is a positive integer valued random variable, then
$Y(D)$ denotes a random variable that conditioned on $D=k$ has the
distribution $Y(k)$.
We have the following elementary result, recalling the notation \eqref{pin0}.
\begin{lemma}\label{LY}
For any positive integer valued random variable $D$,
  \begin{align}
\E Y(D) &= \pi_n(\E D-1), \label{eyd}
\\	
\E Y(D)^2 &
=\frac{\pi_n^2\E(D-1)(2D-3)+\pi_n\E(D-1)}{1+\pi_n}. \label{eyd2}
  \end{align}
\end{lemma}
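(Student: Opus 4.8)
The plan is to exploit the mixed-binomial representation recorded just before the statement: conditionally on the recovery time $\tau\sim\Exp(\rho_n)$ of the infected vertex, $Y(k)\sim\Bin\bigpar{k-1,U}$ where $U:=1-e^{-\gb_n\tau}$, and $U$ has the Beta distribution $B(1,\rho_n/\gb_n)$. So I would first compute the two lowest moments of $U$, then use the conditional binomial structure to obtain $\E[Y(k)\mid U]$ and $\E[Y(k)^2\mid U]$, average over $U$, and finally average over the value of $D$. For the moments of $U$, recall that for a $B(1,b)$ variable $\E U^m=\frac{\Gamma(1+m)\Gamma(1+b)}{\Gamma(1)\Gamma(1+m+b)}$, so $\E U=1/(1+b)$ and $\E U^2=2/\bigpar{(1+b)(2+b)}$. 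Taking $b=\rho_n/\gb_n$ and recalling $\pi_n=\gb_n/(\gb_n+\rho_n)=1/(1+b)$, so that $1+b=\pi_n\qw$ and $2+b=(1+\pi_n)\pi_n\qw$, this gives $\E U=\pi_n$ and $\E U^2=2\pi_n^2/(1+\pi_n)$.

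Next, since $\E[Y(k)\mid U]=(k-1)U$ and $\Var\bigpar{Y(k)\mid U}=(k-1)U(1-U)$, I would write
\begin{equation*}
\E[Y(k)^2\mid U]=(k-1)U(1-U)+(k-1)^2U^2=(k-1)U+(k-1)(k-2)U^2 ,
\end{equation*}
and averaging over $U$ yields $\E Y(k)=(k-1)\pi_n$ and $\E Y(k)^2=(k-1)\pi_n+(k-1)(k-2)\,\frac{2\pi_n^2}{1+\pi_n}$. Conditioning on the event $\set{D=k}$, the first identity gives \eqref{eyd} at once. For \eqref{eyd2} I would put the second identity over the common denominator $1+\pi_n$ and factor out $\pi_n(k-1)$: the numerator is $\pi_n(k-1)\bigsqpar{(1+\pi_n)+2\pi_n(k-2)}=\pi_n(k-1)\bigsqpar{1+\pi_n(2k-3)}=\pi_n(k-1)+\pi_n^2(k-1)(2k-3)$, and taking expectations over $D=k$ gives precisely $\bigpar{\pi_n^2\E(D-1)(2D-3)+\pi_n\E(D-1)}/(1+\pi_n)$.

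The computation is entirely elementary and there is essentially no obstacle; the only points requiring a moment of care are the two Beta-moment evaluations $\E U=\pi_n$ and $\E U^2=2\pi_n^2/(1+\pi_n)$, and the algebraic identity $(1+\pi_n)+2\pi_n(k-2)=1+\pi_n(2k-3)$ used at the last step. The interchange of expectation and conditioning is harmless since $0\le Y(k)\le k-1$; and if $\E D^2=\infty$ the identities still hold with both sides read in $[0,\infty]$.
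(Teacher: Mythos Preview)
Your proof is correct and follows essentially the same approach as the paper: both condition on the recovery time (equivalently on $U=1-e^{-\gb_n\tau}$), use the binomial conditional moments, and then average over $D$. The paper's version is terser---it states the formulas for $\E Y(k)$ and $\E Y(k)^2$ directly in terms of $\rho_n/\gb_n$ and then rewrites them in terms of $\pi_n$---whereas you make the Beta-moment step explicit, but the substance is identical.
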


\begin{proof} For each $k\ge1$, we obtain by conditioning on the recovery
  time $\tau$,
  \begin{align}
\E Y(k) &= (k-1) \frac{1}{1+\rho_n/\gb_n}=(k-1)\pi_n,
\\	
\E Y(k)^2 &=  \frac{(k-1)(2k-2+\rho_n/\gb_n)}{(1+\rho_n/\gb_n)(2+\rho_n/\gb_n)}
=\frac{(k-1)(2k-3)\pi_n^2+(k-1)\pi_n}{1+\pi_n}
  \end{align}
and \eqref{eyd}--\eqref{eyd2} follows by conditioning on $D$.
\end{proof}

Let $A$ be a constant, and consider only $m\le M:=\floor{A\ga_n\qww}$. At
pairing event $m$ for $m \le M$, the number of \fhe{s} is at least
$\sum_k k\nk-2A\ga_n\qww\ge \sum_k k\nk\cdot(1-A_1 n\qw\ga_n\qww)$
for some constant $A_1$. Thus, the probability that a susceptible vertex
with $\ell$ half-edges is infected is at most, for $A_2:=2A_1$ and large $n$,
\begin{equation}
  \frac{\ell \nSx{\ell}}{\sum_k k\nk\cdot(1-A_1 n\qw\ga_n\qww)}
\le
 \bigpar{1+A_2 n\qw\ga_n\qww}  \frac{\ell \nSx{\ell}}{\sum_k k\nk}.
\end{equation}
Let $D^+\ge1$ be a random variable with the distribution
\begin{equation}\label{D+}
  \P(D^+\ge j) :=
\min\lrpar{
\bigpar{1+A_2 n\qw\ga_n\qww}\frac{\sum_{k\ge j}k \nSx{k}}{\sum_k k\nk}
,1},
\qquad j\ge2,
\end{equation}
and let $\zeta^+:=Y(D^+)-1$.
(Note that $D^+$ and $\zeta^+$ depend on $n$, although we omit this from the
notation.)
Then $\zeta_m:=\Delta\Zm$, conditioned on what has happened earlier, is
stochastically dominated by $\zeta^+$.
Hence, there exist independent copies $(\zeta^+_m)_1^\infty$
of $\zeta^+$ such that $\zeta_m\le \zeta^+_m$ for all $m\le M$
such that the epidemic has not yet stopped;
furthermore, $(\zeta^+_m)_1^\infty$ are also independent of $Z_0$.
If the epidemic stops at $\mx<M$ (because $Z_{\mx}=0$ so there are no
more pairing events), then we for convenience
extend the definition of $\zeta_m$ and $\Zm$
to all $m\le M$ by defining
$\zeta_m:=\zeta^+_m$  for $m>\mx$,
and still requiring $\zeta_m=\gD\Zm$.
Consequently, $\zeta_m\le\zeta_m^+$ for all $m\le M$ and thus
the (possibly extended) sequence $(\Zm)_0^M$ is
dominated by the random walk $(\Zm^+)_0^M$ with $\Zm^+:=Z_0+\sum_{i=1}^m
\zeta^+_i$.

Next, observe that
\eqref{D+} implies
\begin{equation}\label{ED+>}
  \E D^+-1 = \sum_{j=2}^\infty \P(D^+\ge j)
\ge
\sum_{j=2}^\infty
\frac{\sum_{k\ge j}k \nSx{k}}{\sum_k k\nk}
= \frac{\sum_{k} (k-1)k\nSx{k}}{\sum_k k\nk}
\end{equation}
and also, since $n\ga_n^3\to\infty$,
\begin{equation}\label{ED+<}
\sum_{j=2}^\infty \P(D^+\ge j)
\le
\sum_{j=2}^\infty
\bigpar{1+A_2 n\qw\ga_n\qww}\frac{\sum_{k\ge j}k \nSx{k}}{\sum_k k\nk}
=\bigpar{1+o(\ga_n)} \frac{\sum_{k} (k-1)k\nSx{k}}{\sum_k k\nk}.
\end{equation}
It thus follows that
\begin{equation}\label{ED+}
  \E D^+-1 = \sum_{j=2}^\infty \P(D^+\ge j)
=\bigpar{1+o(\ga_n)} \frac{\sum_{k} (k-1)k\nSx{k}}{\sum_k k\nk}.
\end{equation}
Hence,  using \eqref{eyd}, \eqref{pin0}, \eqref{R0} and \eqref{aR00},
\begin{equation}\label{Ezeta+}
  \begin{split}
  \E \zeta^+
&= \frac{\gb_n}{\gb_n+\rho_n}\E (D^+-1)-1
=
\bigpar{1+o(\ga_n)}\frac{\gb_n}{\gb_n+\rho_n}
\frac{\sum_{k} (k-1)k\nSx{k}}{\sum_k k\nk} -1
\\&
=\bigpar{1+o(\ga_n)}\cR_0-1
=\cR_0-1+o(\ga_n)
=\glii\qw\ga_n+o(\ga_n).
  \end{split}
\end{equation}
Note also that \eqref{D+},
using \eqref{e:nSktopk}, \eqref{e:alphanasy}, \eqref{emm},
shows that as \ntoo, $D^+\dto \hDS$,
where $\hDS$ has the size-biased distribution
$\P(\hDS=k)=kp_k/\gl$. Moreover, it follows easily from \ref{d:cubeUI}
and \eqref{D+} that
$D^+$ is uniformly square integrable as \ntoo,  and thus
\begin{equation}\label{ED+2}
  \begin{split}
\E (D^+)^2\to \E(\hDS)^2
=\frac{\sumk k^3p_k}{\sumk kp_k}=\frac{\E\DSl^3}{\E\DSl}
  \end{split}
\end{equation}
and similarly
\begin{equation}\label{ED++}
  \begin{split}
\E D^+\to \E\hDS
=\frac{\E\DSl^2}{\E\DSl}.
  \end{split}
\end{equation}
By \eqref{Ezeta+} and $\ga_n\to0$ we have $\E\zeta^+\to0$, and thus
$\pi_n(\E D^+-1)\to1$. Hence, by \eqref{ED++} (or directly from \eqref{R0}),
\begin{equation}\label{pin}
  \pi_n =\frac{\gb_n}{\gb_n+\rho_n}\to\frac{\E\DSl}{\E\DSl(\DSl-1)}
=\frac{\gl}{\glii}.
\end{equation}

Since $|\zeta^+|\le D^+$, it follows that also
$\zeta^+$ is uniformly square integrable as \ntoo. Furthermore,
$\E Y(D^+) = 1+\E \zeta^+=1+o(1)$ and thus, using \eqref{eyd2} and
\eqref{ED+2}--\eqref{pin},
\begin{equation}\label{vz+}
  \begin{split}
\Var\zeta^+
&=
\Var(Y(D^+))=
\E (Y(D^+)^2)-\bigpar{1+o(1)}^2
\\&
= \frac{\pi_n^2}{1+\pi_n}\E(D^+-1)(2D^+-3) + \frac{\pi_n}{1+\pi_n}\E(D^+-1)
-1+o(1)
\\&
\to
\frac{\gl^2}{\glii(\glii+\gl)}\frac{\E\DSl(\DSl-1)(2\DSl-3)}{\E\DSl}
+
\frac{\gl}{\glii+\gl}\frac{\E\DSl(\DSl-1)}{\E\DSl}
-1
\\&
=\frac{\gl(2\gliii+\glii)}{\glii(\glii+\gl)}
+
\frac{\glii}{\glii+\gl}
-1
\\&
=\frac{2\gl\gliii}{\glii(\glii+\gl)}
=:\gss.
  \end{split}
\end{equation}

Now consider
$\ga_n(\Zx{M}^+ -Z_0-M\E\zeta^+)
=\ga_n\sum_{i=1}^{\floor{A\ga_n\qww}}(\zeta^+_i-\E\zeta^+)$.
The summands are
i.i.d.\ with mean 0, and the uniform square integrability of $\zeta^+$
implies that the Lindeberg condition holds; thus the central limit theorem
\cite[Theorem 5.12]{kallenberg} applies and yields, using \eqref{vz+},
$\ga_n(\Zx{M}^+ -Z_0-M\E\zeta^+)\dto N(0,A\gss)$ as \ntoo.
Moreover, normal convergence of the endpoint of a random walk implies
Donsker-type convergence of the entire random walk to a Brownian motion, see
\cite[Theorem 14.20]{kallenberg}; hence,
\begin{equation}\label{z+lim1}
  \ga_n\lrpar{\Zx{t\ga_n\qww}^+ -Z_0-t\ga_n\qww\E\zeta^+}\to \gs B_t,
\end{equation}
where $B_t$ is a standard Brownian motion
and we have defined $\Zx{t}^+$ also
for non-integer $t$ by $\Zx{t}^+:=\Zx{\floor{t}}^+$.
(We define $\Zx{t}$ and $\Zx{t}^-$ below in the same way.)
Here the convergence is in distribution in the \Skorohod{} space $D[0,A]$,
but we may by the \Skorohod{} coupling theorem
\cite[Theorem 4.30]{kallenberg}
assume that the processes for different $n$ are coupled such that \as{}
\eqref{z+lim1} holds uniformly on $[0,A]$.

Moreover, $\ga_n\qw\E\zeta^+\to\glii\qw$ by \eqref{Ezeta+}, and
thus \eqref{z+lim1} implies
\begin{equation}\label{z+lim}
  \ga_n\lrpar{\Zx{t\ga_n\qww}^+ -Z_0}\to \gs B_t + \glii\qw t .
\end{equation}

\begin{proof}[Proof of Theorem~\ref{t:takeoff}\ref{takeoff1}.]
In this case, $\ga_n\XI0\to0$, and, since $0\le Z_0\le\XI0$,
it follows from \eqref{z+lim} that
$  \ga_n\Zx{t\ga_n\qww}^+ \to \gs B_t + \glii\qw t$, where (as said above) we
may assume that the convergence holds uniformly on $[0,A]$ a.s.
For any fixed $\deps>0$,  the \rhs{} is \as{} negative for some
$t\in[0,\deps]$, and thus \whp{}
$\ga_n\Zx{t\ga_n\qww}^+ <0$ for some $t\in[0,\deps]$.
Since $Z_m\le\Zm$
it follows that \whp{}
$\mx\le\deps\ga_n\qww$,
\ie{}
the epidemic stops with
$Z_m=0$
after at most
$\deps\ga_n\qww$ infections.
Hence, \whp{}
\begin{equation}\label{t10}
  \cZ\le \mx\le \deps\ga_n\qww =o\bigpar{n\ga_n}.
\end{equation}
Since $\gd$ is arbitrary, this moreover shows
$\fs=\op\bigpar{\gan\qww}$.
\end{proof}

\begin{proof}[Proof of Theorem~\ref{t:takeoff}\ref{takeoff3}
in the multigraph case.]
We combine the upper bound $\Zm^+$ above with a matching lower bound.
Let $x_1,x_2,\dots$ be an i.i.d.\ sequence of random half-edges,
constructed before we run the epidemic by drawing with replacement from the
set of all half-edges.
Then, at the $m$:th pairing event, when we are to pair an infective
red half-edge $y_m$, if $x_m$ still is free and $x_m\neq y_m$, we pair $y_m$
with $x_m$;  otherwise we
resample and pair $y_m$ with a uniformly chosen \fhe{} $\neq y_m$.
Furthermore,
we let
$\zeta_m^-:=-1$
if  $x_m$
is initially infective,
and if $x_m$ belongs to an initially susceptible vertex of degree $k$,
we let $\zeta_m^-$ be a copy of $Y(k)-1$ (independent of the history);
if $x_m$ still is susceptible
at the $m$:th pairing event (and thus free, so we pair with $x_m$),
we may assume that $\zeta_m^-:=\zeta_m$, the
number of new red \fhe{s} minus 1.
Note that $(\zeta_m^-)_{m\ge1}$ is an \iid{} sequence of random variables
with the distribution $Y(D^-)-1$, where $D^-$ has the distribution obtained
by taking $A_2=0$ in \eqref{D+};
furthermore, $(\zeta^-_m)_1^\infty$ are independent of $Z_0$.
Let $\Zm^-:=Z_0+\sum_{i=1}^m\zeta_i^-$.
Note that \eqref{ED+>}--\eqref{vz+} hold for $D^-$ and $\zeta_m^-$ too (with
some simplifications), and thus, in analogy with \eqref{z+lim},
\begin{equation}\label{z-lim}
  \ga_n\lrpar{\Zx{t\ga_n\qww}^- -Z_0}\to \gs B^-_t + \glii\qw t ,
\end{equation}
for some Brownian motion $B_t^-$.
We next verify that we can take the same Brownian motion in \eqref{z+lim}
and \eqref{z-lim}.

Let $\zeta_m':=\zeta_m^--\zeta_m$.
Thus $\zeta_m'=0$ if $x_m$ is susceptible at time $T_m$. If $x_m$ was
initially susceptible, with degree $k$, but has been infected, then
$\zeta_m'\le \zeta_m^-+2\le k$.
If $x_m$ was initially infected, then $\zeta_m^-=-1$ and thus
$\zeta_m'\le \zeta_m^-+2\le 1$.

Consider as above only $m\le M:=\floor{A\ga_n\qww}$, for some (large)
constant $A>0$. For $m>\mx$, when the epidemic has stopped, we have
defined $\zeta_m=\zeta_m^+$. Since $\zeta_m^\pm\eqd Y(D^\pm)-1$ and $D^-$ is
stochastically dominated by $D^+$, we may in this case assume that
$\zeta_m=\zeta_m^+\ge\zeta_m^-$, and thus $\zeta_m'\le0$.

For $m\le M$, the number of
initially susceptible half-edges that have been infected
is at most,
using \eqref{RdSmax} and \eqref{e:alphanasy},
$ m \dSmax = O\bigpar{\ga_n\qww\dSmax}
= o\bigpar{\ga_n\qww n^{1/3}}=o(n)$.
Hence the number of \fhe{s}  at $T_m$ is
at least $\sum_k k\nSk-m \dSmax=\gl n-o(n)\ge \cc n \ccdef\ccm$
for $\ccm:=\gl/2$ if $n$ is large enough.
It follows that the probability that a given initially susceptible vertex of
degree $k$ has been infected before $T_m$ is at most
$mk/(\ccm n)$, and the probability that one of its half-edges is chosen as
$x_m$ is at most $k/(\ccm n)$ for every $m\le M$.
Similarly, the probability that $x_m$ is initially infective is at most
$\XI0/(\ccm n)$.

Hence it follows from the comments above, using \eqref{k3} and
the assumption $\ga_n\XI0=O(1)$ in \ref{takeoff3}, that
$(\zeta_m')_+:=\max(\zeta_m',0)$ has expectation
\begin{equation}\label{Ezeta'}
  \E(\zeta_m')_+
\le
\sumk n_k \frac{k}{\ccm n}\cdot\frac{mk}{\ccm n}\cdot k
+\frac{\XI0}{\ccm n}
=O\Bigpar{\frac{m}{n}}+O\Bigparfrac{\ga_n\qw}{n}
=O\Bigpar{\frac{1}{\ga_n^2n}}
=o(\ga_n).
\end{equation}
Let $\bZ':=\sum_{1}^M(\zeta'_m)_+$. Then, by \eqref{Ezeta'},
\begin{equation}
  \label{EZ'}
\E \bZ' = o(M\ga_n)=o\bigpar{\ga_n\qw}.
\end{equation}
Furthermore, for $m\le M$,
\begin{equation}
  \label{zzz}
\Zm^- = \Zm+\sum_{j=1}^m \zeta'_j \le \Zm+\bZ'\le \Zm^++\bZ'.
\end{equation}

Since \eqref{z+lim} and \eqref{z-lim} hold (in distribution),
the sequence $\bigpar{\ga_n\bigpar{\Zx{t\ga_n\qww}^- -Z_0}
,\ga_n\bigpar{\Zx{t\ga_n\qww}^+ -Z_0}}$, $n\ge1$, is tight in $D[0,A]\times
D[0,A]$. Moreover, every subsequential limit in distribution must be of the
form
$\bigpar{\gs B^-_t + \glii\qw t ,\gs B^+_t + \glii\qw t }$ for some Brownian
motions $B^-_t$ and $B^+_t$.
Since $\ga_n\bZ'\pto0$ by \eqref{EZ'}, it then follows from  \eqref{zzz} that
for any fixed $t\in[0,A]$, $B_t^-\le B_t^+$ a.s. Since $B_t^-$ and $B_t^+$
have the same distribution, this implies $B_t^-=B_t^+$ \as{} for every fixed
$t$, and thus by continuity \as{} for all $t\in[0,A]$.

Since all subsequential limits thus are the same, this shows that
\eqref{z+lim} and \eqref{z-lim} hold jointly (in distribution) with $B_t^-=B_t$.
Finally, by \eqref{zzz} and \eqref{EZ'}, this implies
\begin{equation}\label{zlim}
  \ga_n\lrpar{\Zx{t\ga_n\qww} -Z_0}\dto \gs B_t + \glii\qw t ,
\qquad \text{in $D[0,A]$}.
\end{equation}
Since the infimum is a continuous functional on $D[0,A]$, it follows that
\begin{equation}\label{zlim1}
  \ga_n\Bigpar{\inf_{t\le A}\Zx{t\ga_n\qww} -Z_0}
\dto\inf_{t\le A}\bigpar{ \gs B_t + \glii\qw t}
.
\end{equation}
For convenience, denote the left- and \rhs{s} of \eqref{zlim1} by $Y_n$ and
$Y$.
Since the random variable $Y$ has a continuous
distribution, \eqref{zlim1} implies that, uniformly in $x\in\bbR$,
\begin{equation}\label{zlim2}
  \P\bigpar{Y_n \le x}=\P( Y\le x)+o(1).
\end{equation}
The Brownian motion $B_t$ in \eqref{zlim}--\eqref{zlim1} is arbitrary, so we
may and shall assume that $B_t$ is independent of everything else.

We have defined $\mx:=\min\set{m\ge0:\Zm=0}$ and $M:=\floor{A\ga_n\qww}$,
and thus
\begin{equation}\label{mmm}
  \P\bigpar{\mx\le M}
=\P\Bigpar{\inf_{t\le A}\Zx{t\ga_n\qww}\le 0}
=\P\bigpar{Y_n\le-\ga_n Z_0}.
\end{equation}

Recall that $\zeta^+_m$ and $\zeta^-_m$ above are independent
of $Z_0$. Hence, if we fix two real numbers $a$ and $b$, and
condition on the event $\cenab:=\set{a\le \ga_n Z_0 <b}$,
then for every subsequence such that $\liminf_\ntoo\P(\cenab)>0$,
the arguments above leading to \eqref{z+lim}, \eqref{z-lim} and
\eqref{zlim}--\eqref{zlim2} still hold.
(We need $\liminf_\ntoo\P(\cenab)>0$ in order to get a conditional version
of \eqref{EZ'}.) Consequently,
$
  \P\bigpar{Y_n \le x\mid \cenab}=\P( Y\le x)+o(1)
$, 
and thus, recalling that $B_t$ is independent of $Z_0$,
\begin{equation}\label{puh}
  \P\bigpar{Y_n \le x\tand \cenab}=\P( Y\le x)\P(\cenab)+o(1)
=\P( Y\le x\tand\cenab)+o(1).
\end{equation}
On the other hand, \eqref{puh} holds trivially if $\P(\cenab)\to0$.
Every subsequence has a subsubsequence such that either
$\liminf_\ntoo\P(\cenab)>0$ or $\P(\cenab)\to0$, and in any case \eqref{puh}
holds along the subsubsequence; it follows that \eqref{puh} holds for the
full sequence.

In particular, for any $a$ and $b$,
\begin{equation}\label{kri}
  \begin{split}
\P\bigpar{Y_n\le -\gan Z_0\tand\cenab}	
&\le
\P\bigpar{Y_n\le -a\tand\cenab}	
=
\P\bigpar{Y\le -a\tand\cenab}+o(1)	
\\&
\le
\P\bigpar{Y\le -\gan Z_0+b-a\tand\cenab}+o(1).	
  \end{split}
\end{equation}
By assumption, $\gan\XI0$ is bounded, say $\gan\XI0\le C$ for some constant
$C$;
thus $0\le \gan Z_0\le\gan\XI0\le C$.
Let $\gd>0$ and
divide the interval $[0,C]$ into a finite number of subintervals $[a_j,b_j]$
with lengths $b_j-a_j<\gd$. By summing \eqref{kri} for these intervals, we
obtain
\begin{equation}\label{lutfisk<}
  \begin{split}
\P\bigpar{Y_n\le -\gan Z_0}	
\le
\P\bigpar{Y\le -\gan Z_0+\gd}+o(1).	
  \end{split}
\end{equation}
Since $\gd>0$ is arbitrary, this implies
\begin{equation}\label{anna<}
  \begin{split}
\P\bigpar{Y_n\le -\gan Z_0}	
\le
\P\bigpar{Y\le -\gan Z_0}+o(1).	
  \end{split}
\end{equation}
Similarly, we obtain
$\P\bigpar{Y_n\le -\gan Z_0}	
\ge
\P\bigpar{Y\le -\gan Z_0-\gd}+o(1)$
and
$\P\bigpar{Y_n\le -\gan Z_0}	
\ge
\P\bigpar{Y\le -\gan Z_0}+o(1)$.
Consequently,
\begin{equation}
\P\bigpar{Y_n\le -\gan Z_0}	
=
\P\bigpar{Y\le -\gan Z_0}+o(1).
\end{equation}
In other words, using \eqref{mmm} and recalling the meaning of $Y$ from
\eqref{zlim1},
\begin{equation}\label{mx}
  \begin{split}
  \P\bigpar{\mx\le M}
= \P \lrpar{\inf_{t\le A}\bigpar{\gs B_t + \glii\qw t }\le-\ga_n Z_0}+o(1).	
  \end{split}
\end{equation}

If $\mx\le M=\floor{A\gan\qww}$, then,  similarly as  
\eqref{t10} in the proof of case \ref{takeoff1},
\begin{equation}\label{t13}
  \cZ\le \mx\le A\ga_n\qww =o\bigpar{n\ga_n}
\end{equation}
so we are in
case \ref{tsizei:small} in \refT{t:criticalSIR}\ref{tsizei}.

If $\mx>M$, consider again $\mxx$ defined by \eqref{mxx} (but taking minimum
over $m \ge M$),
for a sufficiently
small $\gd>0$. Note that, as in the proof of \ref{takeoff2},
if $\Zx{\mxx}>0$, then \eqref{paddington} holds and \whp{} $\iEnd>\eps\ga_n$
for some small $\eps>0$, and thus \whp{}
\ref{tsizei:large} in \refT{t:criticalSIR}\ref{tsizei} holds.
In other words, for some small $\eps>0$, if
$\mx\le M$, then $\cZ<\eps n \gan$, and if
$\mx>M$ and $\Zx{\mxx}>0$, then $\cZ>\eps n\gan$ \whp.

We next show that the probability that neither of these happens is small. We
condition on $\Zx M$ and argue as in the proof of case \ref{takeoff2}, using
\refL{l:RWdrift} on $\Zx{(M+m)\wedge\mxx}-\Zx M$, and find
\begin{equation}
  \P\bigpar{\mx>M\text{ and } \Zx{\mxx}=0\mid\Zx M}
\le \frac{8\cczmii}{\cczmi\ga_n\Zx M}
.
\end{equation}
Hence, using also \eqref{zlim},
\begin{equation}\label{tt21}
  \begin{split}
\P\bigpar{\mx>M\text{ and } \Zx{\mxx}=0}
&\le \P(\gan\Zx M < \tfrac12\glii\qw A)
+ O(1/A)
\\&
\le \P\bigpar{\gs B_A+\glii\qw A < \tfrac12\glii\qw A}+o(1)
+ O(1/A)
\\&
=O(1/A)+o(1)
.
  \end{split}
\end{equation}
Using \eqref{mx} and the comments above, it follows
that, if $\eps>0$ is small enough, then
\begin{equation}
  \begin{split}
  \P\xpar{\cZ<\eps\gan n}
&=\P\bigpar{\mx\le M}+O(1/A)+o(1)
  \\&
= \P \lrpar{\inf_{t\le A}\bigpar{\gs B_t + \glii\qw t }\le-\ga_n Z_0}
+O(1/A)+o(1).	
  \end{split}
\end{equation}
This holds for every fixed $A>0$, and we can then let $A\to\infty$ and
conclude that
\begin{equation}
  \begin{split}
  \P\xpar{\cZ<\eps\gan n}
= \P \lrpar{\inf_{0\le t<\infty}\bigpar{\gs B_t + \glii\qw t }\le -\ga_n Z_0}
+o(1).
  \end{split}
\end{equation}
It is well-known that
$-\inf_{t\ge0}\bigpar{\gs B_t + \glii\qw t }$ has an exponential distribution
with parameter $2\glii\qw/\gss$, see \eg{} \cite[Exercise II.(3.12)]{RevuzYor}.
Consequently, since $Z_0$ and $(B_t)$ are independent,
\begin{equation}\label{psmall}
  \begin{split}
  \P\xpar{\cZ<\eps\gan n}
&= \E \exp\bigpar{-2\glii\qw\gs\qww\gan Z_0}
+o(1).
  \end{split}
\end{equation}
Since we assume that $\gan\XI0$ is bounded above and below,
$Z_0\le\XI0$ and \refL{l:alphaZ0}\ref{l:alphaZ02}
imply that the expectation in \eqref{psmall} stays away
from 0 and 1 as \ntoo. Moreover, if $\dImax=o(\XI0)$, then
\refL{l:alphaZ0}\ref{l:alphaZ01} and \eqref{psmall} yield,
using \eqref{pin},
\begin{equation}\label{psmall2}
  \begin{split}
  \P\xpar{\cZ<\eps\gan n}
&= \exp\bigpar{-2\glii\qw\gs\qww\gan\pi_n\XI0}+o(1)
=  \exp\bigpar{-2\gl\glii\qww\gs\qww\gan\XI0}+o(1),
  \end{split}
\end{equation}
which yields \eqref{takeoff} by
 the definition of $\gss$ in \eqref{vz+}.

Finally, \eqref{t13} and the argument above, in particular \eqref{tt21},
shows that
\begin{equation}
  \P\bigpar{\text{the epidemic is small but }\fs>A\gan\qww}=O(1/A)+o(1),
\end{equation}
which implies the final claim.
\end{proof}

\begin{proof}[Proof of Theorem~\ref{t:takeoff}\ref{takeoff3}
in the simple graph case.]
As said in \refS{s:notationresults}, this result for the random simple graph
$\Gr$ does not follow immediately from the multigraph case (as the other
results in this paper do).
We use here instead the argument for the corresponding result in
\cite[Section 6]{JansonLuczakWindridgeSIR}, with minor modifications as
follows. We continue to work with the random multigraph $\GrCM$.
Also, we now allow initially recovered vertices, since our trick
in \refS{s:simplify}
to eliminate them does not work for the simple graph case.

Fix a sequence $\eps_n\to0$ such that \refT{t:criticalSIR}\ref{tsizei}
holds,
and let $\cL$ be the event that there are less than $\eps_n\qq\nS\gan$ pairing
events; note that if $\cL$ occurs, then $\fs<\eps_n\qq\nS\gan$,
while if $\cL$ does not occur, \whp{} $\fs >\eps_n\nS\gan$ by a simple
argument (using \eg{} Chebyshev's inequality); hence $\cL$ says \whp{} that
the epidemic is small.

Furthermore, let $W$ be the number of loops and pairs of parallel edges in
$\GrCM$; thus $\GrCM$ is simple if and only if $W=0$, and we are interested
in the conditional probability $\P(\cL\mid W=0)$.
By \cite{simpleII} (at least if we consider suitable subsequences), $W\dto
\hW$ for some random variable $\hW$, with convergence of all moments.

We write $W=W_1+W_2$, where $W_2$ is the number of loops and pairs of
parallel edges that include either an initially infective vertex
(as in \cite{JansonLuczakWindridgeSIR}), or a vertex with degree at least
$\dxx:=1/\ga_n$.
Then, by the assumptions
\begin{equation}
  \E W_2 = O\lrpar{\biggpar{\sumk k^2 \nIk + \sum_{k\ge\dxx}k^2(\nSk+\nRk) }
\biggpar{\frac{1}{n}+\frac{\sumk k^2\nk}{n^2}}}
=o(1)
\end{equation}
and thus it suffices to consider $W_1$.
Note also that if we fix a vertex $v$ that is not initially infected and has
degree less than $\dxx$, then
the probability that the infection will reach $v$ within less than
$\eps_n\qq \nS\gan$ pairing events is
$O\bigpar{\dxx\eps_n\qq n \gan/n}=o(1)$,
so \whp{} $v$ is not infected before it is
determined whether $\cL$ occurs or not.

The rest of the proof is exactly as in
\cite{JansonLuczakWindridgeSIR},
to which we refer for details.
\end{proof}

\begin{remark}\label{Rtakeoff}
The formula \eqref{takeoff} for the asymptotic
probability that the epidemic is small
holds only under the assumption $\dImax=o(\XI0)$, \ie, that among the initially
infective vertices, no vertex has a significant fraction of all their
half-edges. Even if this assumption does not hold, the asymptotic
probability
can be found from \eqref{psmall}, since as in the proof of \refL{l:alphaZ0},
$Z_0=\sum_i Z_{0,i}$ where the $Z_{0,i}$ are independent and, using the
notation in \refL{LY},
$Z_{0,i}\eqd Y(\dIi+1)$, where $\dIi$ is the degree of the $i$-th initially
infective vertex. Hence, letting $\kk$ denote the fraction in
\eqref{takeoff}, so $\kk\sim 2\glii\qw\gs\qww\pi_n$,
the probability is
\begin{equation}\label{lyman}
  \begin{split}
\prod_i\E \exp\bigpar{-\kk\pi_n\qw\gan Y(\dIi+1)}+o(1)
=
\prod_{k}\bigpar{\E \exp\bigpar{-\kk\pi_n\qw\gan Y(k+1)}}^{\nIk}+o(1).
  \end{split}
\end{equation}
A calculation,
see \refApp{Adeluxe},
shows that if we define
\begin{equation}\label{psin}
\psi_n(k):=  \log\intoi
\exp
\Bigpar{k\gan\kk\pi_n\qw\bigpar{x^{\gb_n/\rho_n}-\tfrac{\rho_n}{\gb_n+\rho_n}}}
\,dx,
\end{equation}
interpreted as 0 when $\rho_n=0$,
then this probability is
\begin{equation}\label{takeoffDeluxe}
  \exp\Bigpar{-\kk\gan\XI0+\sum_k \nIk \psi_n(k) }+o(1),
\end{equation}
thus generalizing \eqref{takeoff}.
It is easily seen that $\psi_n(k)=O\bigpar{k^2\gan^2}$ and thus
$\sum_k \nIk \psi_n(k)=O\bigpar{\gan\dImax\sum_k \nIk k  \gan}
=O\bigpar{\gan\dImax}$ under our assumption $\gan\XI0=O(1)$, which
explains why the extra term in \eqref{takeoffDeluxe}
disappears in \eqref{takeoff}.

Note also that $\psi_n(k)\ge0$ by Jensen's inequality; thus an extremely uneven
distribution of the degrees of the initially infective vertices will
increase the probability of a small outbreak.
\end{remark}

\appendix

\section{The Sellke construction for network epidemics}

\label{app:sellke}

House et al.~\cite{House:2012} introduced a method to simulate the final size
of a network epidemic that drew on the constructions of
Sellke~\cite{Sellke:1983} and Ludwig~\cite{Ludwig:1975} (the latter in its most
general sense as described by Pellis et al.~\cite{Pellis:2008}). This approach
allows us to make a realisation by drawing three sets of random numbers, and
then we can consider multiple initial conditions and the final sizes they
produce without drawing further random numbers.

Let individuals be labelled $i,j,\ldots \in \{ 1, \ldots, n\}$. Let them be
connected by a general network with adjacency matrix with elements $G_{ij}$,
and let $Z_i$ be an indicator variable taking the value $1$ if individual $i$
is eventually infected during the epidemic and $0$ otherwise. Our algorithm
then proceeds as follows.

First, for each individual $i$ pick an infectious period $T_i \sim
\mathrm{Exp}(\rho)$. Secondly, for each individual $i$ pick a threshold
$Q_i\sim\mathrm{Exp}(1)$.  This represents the individual's resistance to
infection.  Thirdly, a random permulation $P$ of the integers $\{1,\ldots,n\}$
is chosen. The first $m$ elements of this permutation are then taken to be the
indices of the initially infectious individuals, i.e.\ we initialise
$Z_i\leftarrow 1$ if $i\in P$ and $Z_i\leftarrow 0$ otherwise. Then to arrive
at the correct final values we iteratively search for each individual $i$ that
has $Z_i=0$ and set $Z_i\leftarrow 1$ if
\begin{equation}
	Q_i < \beta \sum_j G_{ij} T_j Z_j \; . \label{sel_net}
\end{equation}
This procedure is continued until no changes occur on a given iteration, giving
the final size for that value of $m$. We can
then increase $m$ and continue the iterative procedure, allowing simulations
that are both computationally efficient, and for which the total final size
$\mathcal{Z}$ is monotone non-decreasing in the initial number infected $m$.

\section{Critical behaviour of $\Gr(n, (d_i)_{i=1}^n)$}
\label{app:cpts}
In this appendix, we show how the methods in this paper  yield an
improvement of
\cite[Theorem 2.4]{JansonLuczakGiantCpt}
on the size of the largest component in $\Gr(n, (d_i)_{i=1}^n)$ near
criticality,
replacing the moment condition
used in \cite{JansonLuczakGiantCpt}
\begin{equation}\label{e:4+eta}
\sumk k^{4+\eta} \nk = O(n)
\end{equation}
for some $\eta > 0$,
by the uniform summability of $\sumk k^{3} \nk$
in assumption \eqref{d:uicubenk} below, \cf{} \ref{d:cubeUI}.
This is more or less best possible,
since if
the limiting vertex degree distribution does not have a finite third moment,
then (at least in typical cases), the size of the largest component is
$\op(n\ga_n)$,
see van der Hofstad, Janson and Luczak (in preparation).

\begin{theorem}\label{t:largecpts}
Suppose that the degree sequences $(d_i)_{i=1}^n$ satisfy the following conditions.
\begin{enumerate}[\upshape(i)]
\item There is a probability distribution $(p_k)_{k = 0}^\infty$ such that $\nk/n \to p_k$ for each $k \ge 0$.
\item\label{d:uicubenk} For all $\eps > 0$, there exists $M$ such that, for all $n$, $\sum_{k=M}^\infty k^3 \nk < \eps n$.
\item $p_1 > 0$.
\item $\sumk k(k-2)p_k = 0$.
\item $\alpha_n \= \sumk k(k-2)\nk/n$ satisfies $n^{1/3} \alpha_n \to \infty$.
\end{enumerate}

Define the positive constants $\lambda \= \sumk k p_k$ and $\gamma \= \sumk
k(k-1)(k-2)p_k$.

Let $\cC_1$ and $\cC_2$ denote the largest and second largest components of $\Gr(n, (d_i)_{i = 1}^n)$.
Then the number of vertices in $\cC_1$ is
\begin{equation}
v(\cC_1) = \frac{2\lambda}{\gamma}n\alpha_n + \op(n\alpha_n),
\end{equation}
the number of vertices in $\cC_1$ with degree $k \ge 0$ is
\begin{equation}
v_k(\cC_1) = \frac{2k p_k}{\gamma}n\alpha_n + \op(n\alpha_n),
\end{equation}
and the number of edges in $\cC_1$ is
\begin{equation}
e(\cC_1) = \frac{2\lambda}{\gamma}n\alpha_n + \op(n\alpha_n).
\end{equation}

In contrast, $\cC_2$ has only $v(\cC_2) = \op(n\alpha_n)$ vertices and
$e(\cC_2) = \op(n\alpha_n)$ edges.
\end{theorem}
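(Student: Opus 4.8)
The plan is to deduce Theorem~\ref{t:largecpts} from Theorems~\ref{t:criticalSIR} and~\ref{t:takeoff} via the standard device of identifying the component exploration in the configuration model with an SIR epidemic having no recovery. First I would transfer the problem to the multigraph $\GrCM$: assumption~\ref{d:uicubenk} implies $\sumk k^2\nk=O(n)$, so by \cite{Janson:2009:PRM:1520305.1520316} the probability that $\GrCM$ is simple is bounded away from $0$, and since all conclusions below are of the ``for every $\eps>0$, w.h.p.'' type they transfer to $\Gr$. On $\GrCM$, set $\rho_n=0$ and $\beta_n=1$; then every infective half-edge eventually pairs, and running the process of \refS{s:acceleratedepidemic} from a seed set $I$ of initially infective vertices reveals precisely $\bigcup_{v\in I}\cC(v)$. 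Provided the total seed degree $\sumk k\nIk$ is $o(n\alpha_n)$ (which holds if $\nI=o(n\alpha_n)$ and $I$ is chosen at random, using $\sumk k^2\nk=O(n)$), one checks that the epidemic's parameters match Theorem~\ref{t:largecpts}: the susceptible degree distribution still converges to $(p_k)$ with the same $\lambda=\gl$ and $\gamma=\gliii$, and since $\rho_n/\beta_n=0$ the quantity in \eqref{e:alphan} is $\bigpar{\sumk k(k-1)\nSk-\sumk k\nk}/\nS=\bigpar{1+o(1)}\frac1n\sumk k(k-2)\nk$, i.e.\ the $\alpha_n$ of Theorem~\ref{t:largecpts}. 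Hypotheses (i)--(v) then yield \ref{d:first}--\ref{d:last}: note $\alpha_n\to0$ because $|k(k-2)|\le k^2$ and $k^2\nk/n$ is uniformly integrable, while (iii)+(iv) force $\sum_{k\ge3}k(k-2)p_k=p_1>0$, hence $p_k>0$ for some $k\ge3$ and thus $\gamma>0$ and \ref{d:p1orrhopos}; also $n\alpha_n^3=(n^{1/3}\alpha_n)^3\to\infty$.

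For the largest component I would pick $\nI=\nI(n)$ with $\alpha_n\qw\ll\nI\ll n\alpha_n^2$ (possible since $n\alpha_n^3\to\infty$), take $I$ to be a uniformly random set of $\nI$ vertices, and all others susceptible. As in the proof of the $G(n,p)$ corollary, after a \Skorohod{} coupling of the degree sequence one may assume \ref{d:first}--\ref{d:last} hold a.s.; moreover $\XI0=\sumk k\nIk=(\lambda+\op(1))\nI$, so $\alpha_n\XI0\to\infty$ and $\nu=\lim\XI0/(\nS\alpha_n^2)=0$. Then \refT{t:criticalSIR}\ref{tsizei} together with \refT{t:takeoff}\ref{takeoff2} gives that w.h.p.\ the epidemic is large with $\fs=(2\lambda/\gamma+\op(1))\nS\alpha_n$, and \eqref{t2ab} supplies the degree profile. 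Hence the revealed set $\cE$ (infected susceptibles plus the $o(n\alpha_n)$ seeds) is a union of components of $\GrCM$ with $|\cE|=(2\lambda/\gamma+\op(1))n\alpha_n$ and, for each $k$, $(2kp_k/\gamma+\op(1))n\alpha_n$ vertices of degree $k$. Also, for fixed $\delta>0$, any component $C$ with $v(C)\ge\delta n\alpha_n$ satisfies $\Prob(C\cap I=\emptyset)\le e^{-\nI v(C)/n}\le e^{-\delta\nI\alpha_n}\to0$, so w.h.p.\ every macroscopic component lies in $\cE$; in particular $\cC_1\subseteq\cE$ once we know $v(\cC_1)\ge\delta n\alpha_n$.

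It remains to show that $\cE$ is, up to $\op(n\alpha_n)$ vertices, a single component. The crucial input is a ``no intermediate component'' (forbidden-zone) statement: w.h.p.\ no component of $\GrCM$ has between $\delta n\alpha_n$ and $(2\lambda/\gamma-\delta)n\alpha_n$ vertices. This is proved exactly as in \cite{JansonLuczakGiantCpt}, by exploring a single component (equivalently, running the $\rho_n=0$ epidemic from one random seed, or exploring components successively) and applying the concentration estimates of \refT{t:conc} and \refL{LXI}: once the number of active half-edges becomes macroscopic, the coupling with a Brownian motion with positive drift from \refS{Spftakeoff} prevents it from returning to $0$ before $\sim(2\lambda/\gamma)n\alpha_n$ susceptibles have been infected, so the component is either tiny or of the full size. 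Granting this, two components of size $\ge(2\lambda/\gamma-\delta)n\alpha_n$ would both lie in $\cE$, forcing $|\cE|\ge2(2\lambda/\gamma-\delta)n\alpha_n$, a contradiction; so there is at most one, necessarily $\cC_1$, and (again as in \cite{JansonLuczakGiantCpt}) it is nonempty at this scale, giving $v(\cC_1)=(2\lambda/\gamma+\op(1))n\alpha_n$. Then $|\cE\setminus\cC_1|=\op(n\alpha_n)$, so the degree counts of $\cE$ pass to $\cC_1$, yielding $v_k(\cC_1)=(2kp_k/\gamma+\op(1))n\alpha_n$ and, summing $\tfrac12\sumk k\,v_k(\cC_1)$ and using $\sumk k^2p_k=2\lambda$ (a consequence of (iv)), $e(\cC_1)=(2\lambda/\gamma+\op(1))n\alpha_n$. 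Finally, all components other than $\cC_1$ have $\le\delta n\alpha_n$ vertices with $\delta$ arbitrary, and that their edge counts are also $\op(n\alpha_n)$ follows, as in \cite{JansonLuczakGiantCpt}, from the subcriticality of $\GrCM$ with $\cC_1$ removed (its revealed complement being again a uniform matching, with criticality parameter $\sim-n\alpha_n$).

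I expect the main obstacle to be precisely the forbidden-zone step and the accompanying lower bound $v(\cC_1)\ge(2\lambda/\gamma-\delta)n\alpha_n$: the epidemic theorems control the total size of the explored set $\cE$ but not, a priori, its decomposition into components, so ruling out configurations such as many components of intermediate size requires re-running the concentration and Brownian-coupling analysis for single-source explorations. The only novelty relative to \cite{JansonLuczakGiantCpt} is that this analysis now goes through under uniform integrability of $\DSn^3$ (assumption~\ref{d:cubeUI}/\ref{d:uicubenk}) rather than the stronger moment bound \eqref{e:4+eta}, because the required fluctuation estimate is supplied by \refL{l:susdist2ndmomentBD} (together with \eqref{RdSmax}) in place of the fourth-moment-plus estimates used there.
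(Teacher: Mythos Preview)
Your approach is correct but takes a genuinely different route from the paper. The paper does not invoke Theorems~\ref{t:criticalSIR} and~\ref{t:takeoff} at all; instead it observes that the component-exploration algorithm of \cite{JansonLuczakGiantCpt} \emph{is} the time-changed $\rho_n=0$ epidemic of \refS{s:acceleratedepidemic}, and then follows the proof in \cite{JansonLuczakGiantCpt} verbatim after two local substitutions: \cite[Lemma~6.3]{JansonLuczakGiantCpt} is replaced by \refT{t:conc} (the $\op(n\alpha_n^2)$ error suffices even though \cite{JansonLuczakGiantCpt} had a sharper bound), and the Taylor-remainder estimate preceding \cite[(6.7)]{JansonLuczakGiantCpt} is replaced by the argument leading to \eqref{e:dddHIntBD}. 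Everything else --- in particular the forbidden-zone and second-component arguments --- carries over unchanged from \cite{JansonLuczakGiantCpt}. Your plan instead seeds a multi-vertex epidemic to read off $|\cE|$ from the black-box theorems, but as you yourself note, decomposing $\cE$ into components still forces you to redo the single-source exploration of \cite{JansonLuczakGiantCpt} with exactly those same two substitutions; so the detour through Theorems~\ref{t:criticalSIR}--\ref{t:takeoff} buys a clean heuristic for the target size $2\lambda/\gamma\cdot n\alpha_n$ but no actual economy of argument. One small point: you emphasise \refL{l:susdist2ndmomentBD} as the replacement for the $4{+}\eta$ moment input, but note that the second place \eqref{e:4+eta} is used in \cite{JansonLuczakGiantCpt} is the third-derivative bound in the Taylor expansion, handled here by \eqref{e:dddHIntBD}; your reference to \refL{LXI} covers this implicitly, but it is worth flagging both substitutions explicitly.
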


\begin{remark}
If $\sum_k k (k-2) p_k = 0$ then $p_1 > 0$ is equivalent to $p_0+p_1 +p_2 <
1$ (condition \ref{d:p1orrhopos}).
As a consequence, $\gam>0$.
\end{remark}

\begin{proof}[Sketch of proof]
We will explain how to modify the argument of \cite{JansonLuczakGiantCpt}.
The latter is similar in spirit (and actually inspired) our proof of
\refT{t:criticalSIR} in \refS{s:proof}.
Indeed, the algorithm used in \cite{JansonLuczakGiantCpt} to construct the
multigraph $\GrCM(n, (d_i)_{i=1}^n)$ and explore its components is closely related
to the time-changed epidemic with zero recovery rate, i.e.\ $\rho_n = 0$.  In more detail,
we begin with all vertices susceptible (or \emph{sleeping} in the terminology of \cite{JansonLuczakGiantCpt}).  We choose a single vertex at random and declare it infective (or \emph{active} in \cite{JansonLuczakGiantCpt}).  The infection eventually spreads through the whole connected component containing the chosen vertex because $\rho_n = 0$.
The connected component thus comprises the susceptible (sleeping) vertices that were infected (activated)
during this time, together with the initially infective (active) vertex.  The procedure can be repeated until all the connected components have been explored.

In particular, each susceptible (sleeping) vertex of degree $k \ge 0$ is still infected (activated)
at rate $k$ in the algorithm of \cite{JansonLuczakGiantCpt}; in addition one
vertex is activated at the start of each new component.
The number of vertices that would be sleeping if we ignore the latter type
of activations
(denoted $\tilde V_k(t)$ in \cite{JansonLuczakGiantCpt})
thus evolves as the Markov death chain $\Svkm{t}$ considered in the proof of
\refT{t:conc}
but with $\Svkm{0} = \nk$.  One can prove concentration of measure for
$\Svkm{t}$, $\sumk \Svkm{t}$ and $\sumk k \Svkm{t}$, as in \refT{t:conc}.
The analogous result in \cite{JansonLuczakGiantCpt}
is Lemma 6.3 and its proof involves a non-trivial use of assumption
\eqref{e:4+eta}.
So we must replace \cite[Lemma 6.3]{JansonLuczakGiantCpt} with \refT{t:conc}.  Note that the former result
achieves an $\Op(n\qq\gan\qq+n\alpha_n^3)$
bound on the error, versus the $\op(n\alpha_n^2)$ bound of \refT{t:conc}.  However,
it can be checked that the $\op(n\alpha_n^2)$ bound is sufficient for the rest of the proof.

There is only one other place where \cite{JansonLuczakGiantCpt} uses assumption \eqref{e:4+eta}
non-trivially; that is to control the Taylor expansion remainder term in the analogue of \eqref{HITaylor}
(immediately preceding equation (6.7) on page 212).  But we may obtain an adequate bound with the argument leading
to \eqref{e:dddHIntBD}.

All of the other modifications are trivial.
\end{proof}

\section{Proof of \eqref{takeoffDeluxe}}\label{Adeluxe}

Let $c_n:=\kk\pi_n\qw$ and note that $c_n=O(1)$ by \eqref{pin0} and
\eqref{rhobeta}.
If $\rho_n>0$, then by the definition of $Y_n(k)$ before \refL{LY},
and the substitution $x=e^{-\rho_n\tau}$,
\begin{equation}\label{nobel}
  \begin{split}
	\E e^{-c_n\ga_n Y(k+1)}
&=\intoo\Bigpar{1+\bigpar{1-e^{-\gb_n\tau}}\bigpar{e^{-c_n\gan}-1}}^k
 \rho_n e^{-\rho_n\tau}\,d\tau
\\&
=\intoi \Bigpar{1-\bigpar{1-x^{\gb_n/\rho_n}}\bigpar{1-e^{-c_n\gan}}}^k \,dx
\\&
=\intoi \Bigpar{1-\bigpar{1-x^{\gb_n/\rho_n}}\bigpar{c_n\gan(1+O(\gan))}}^k \,dx
\\&
=\intoi \exp\Bigpar{-k\bigpar{1-x^{\gb_n/\rho_n}}\bigpar{c_n\gan+O(\gan^2)}} \,dx
\\&
=e^{O(k\gan^2)} \intoi
\exp\Bigpar{c_n\gan k\bigpar{x^{\gb_n/\rho_n}-1}} \,dx
\\&
=e^{-c_n\pi_n\gan k+O(k\gan^2)} \intoi
\exp\Bigpar{c_n\gan k\bigpar{x^{\gb_n/\rho_n}-\tfrac{\rho_n}{\gb_n+\rho_n}}}
\,dx
\\&
=e^{-\kk \gan k+\psi_n(k)+O(k\gan^2)}
.
  \end{split}
\end{equation}
If $\rho_n=0$, then $\pi_n=1$ and $Y(k+1)=k$, and \eqref{nobel} is trivial.

We obtain \eqref{takeoffDeluxe} by using \eqref{nobel} in \eqref{lyman}.


\begin{figure}[h!]
\centering
\includegraphics[width = 0.45\textwidth]{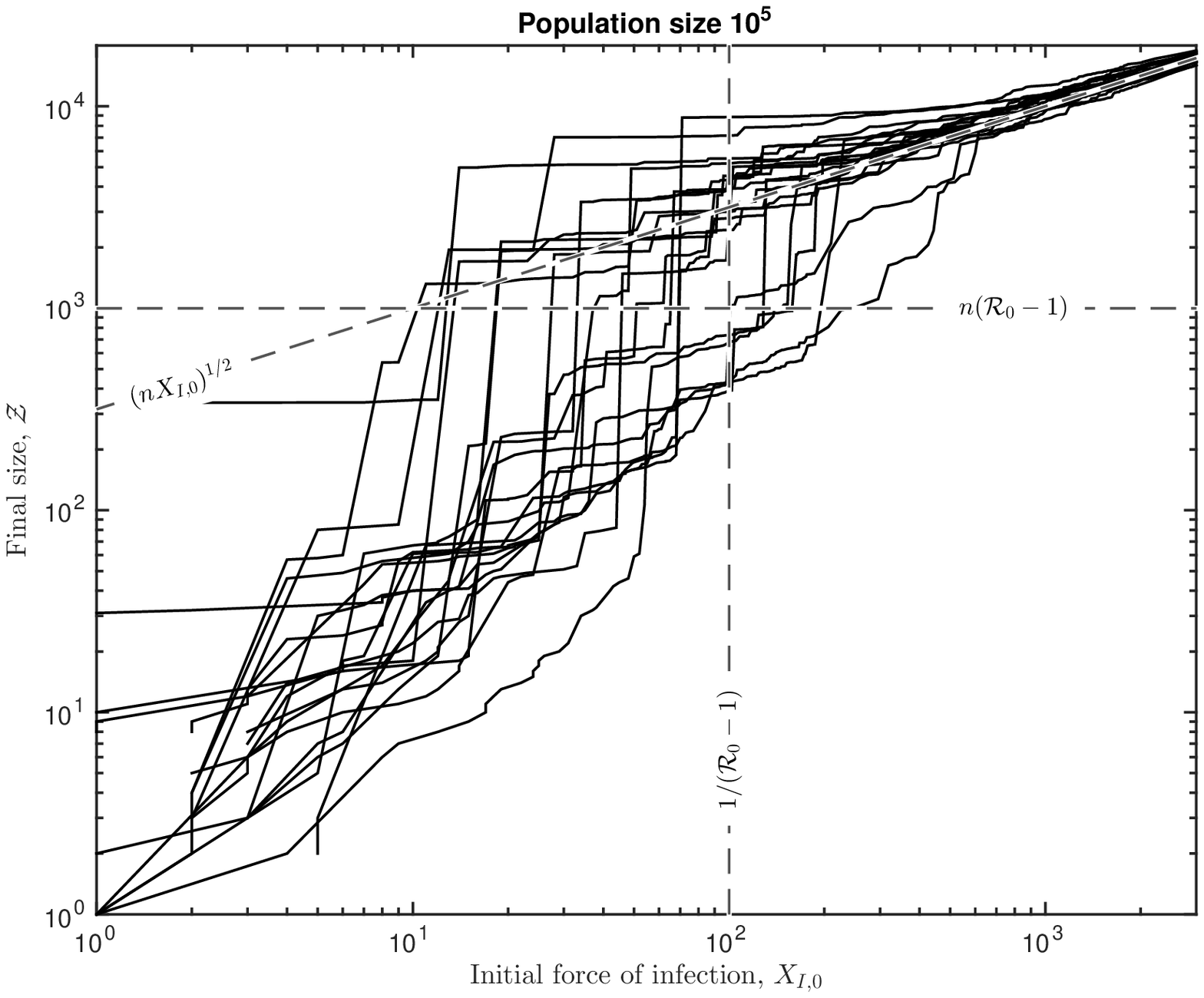}\quad
\includegraphics[width = 0.45\textwidth]{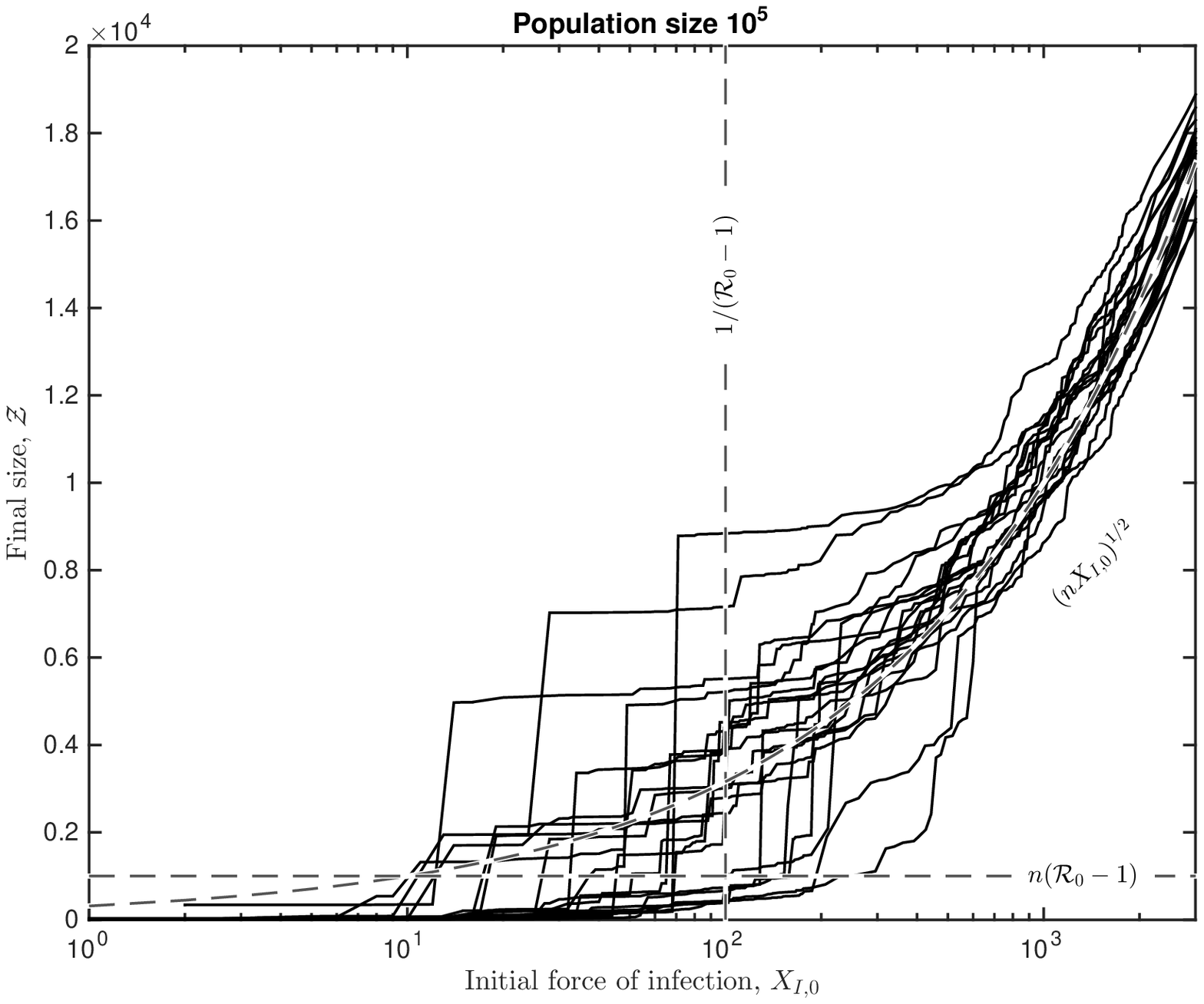}
\\[5mm]
\includegraphics[width = 0.45\textwidth]{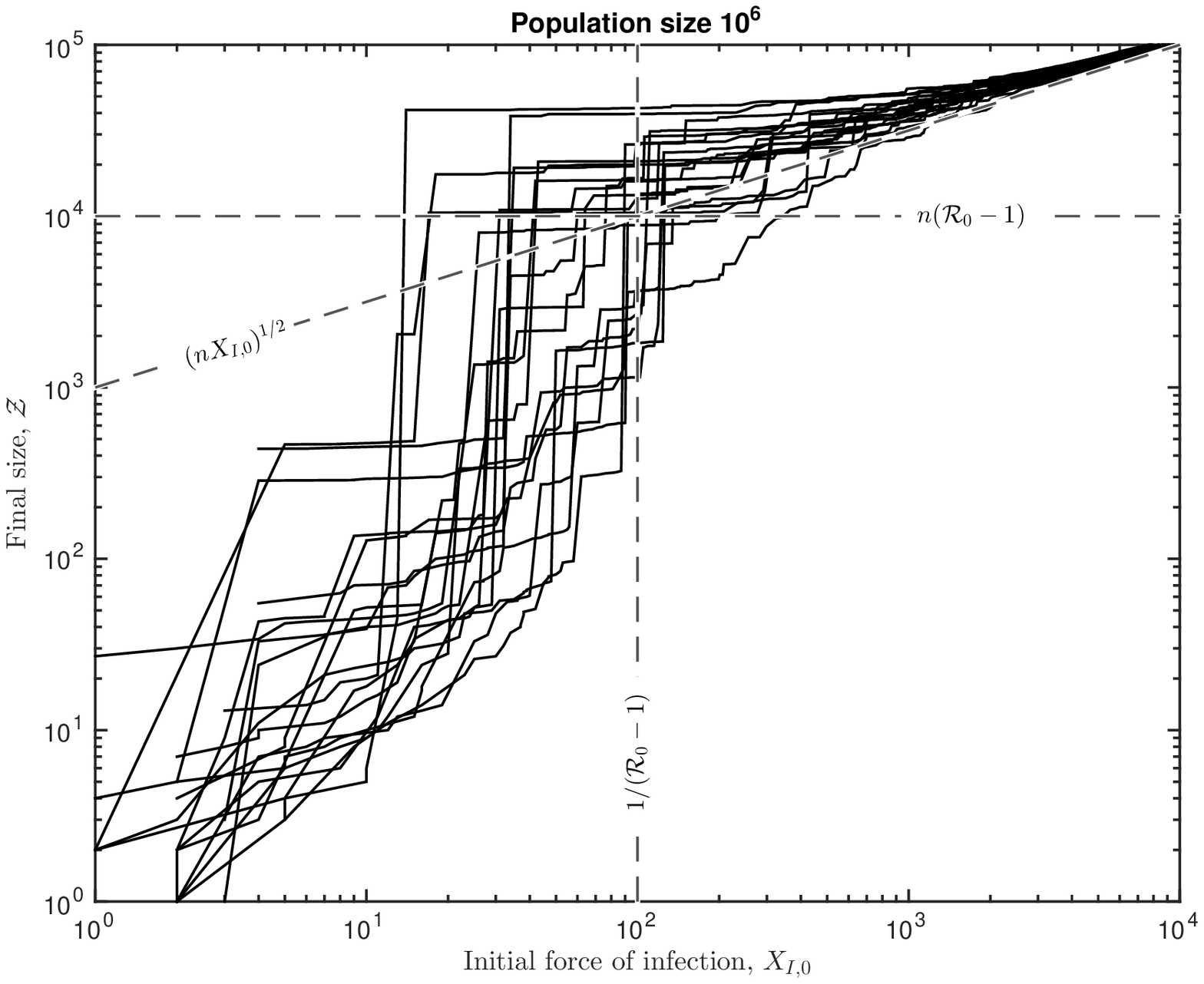}\quad
\includegraphics[width = 0.45\textwidth]{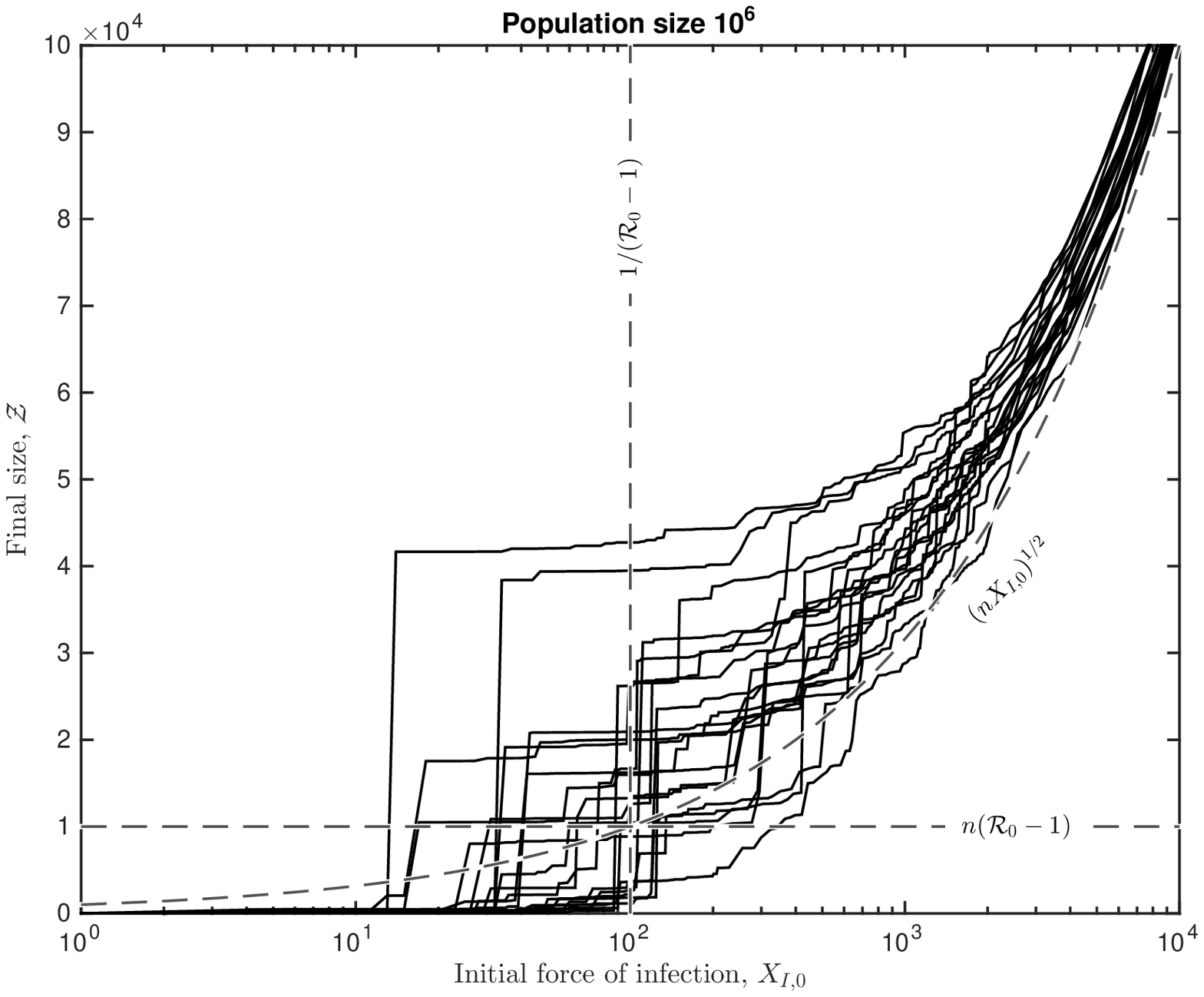}
\\[5mm]
\includegraphics[width = 0.45\textwidth]{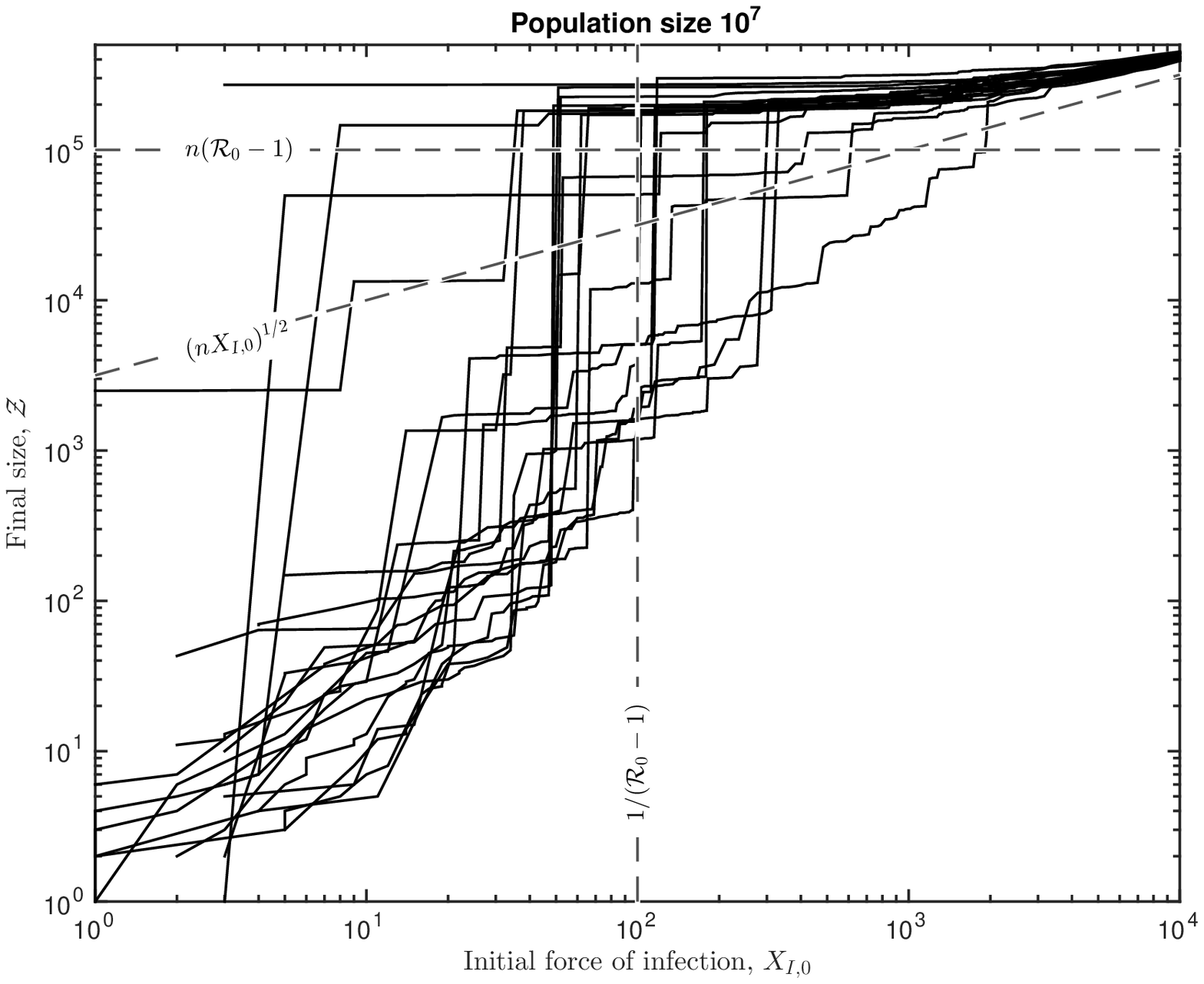}\quad
\includegraphics[width = 0.45\textwidth]{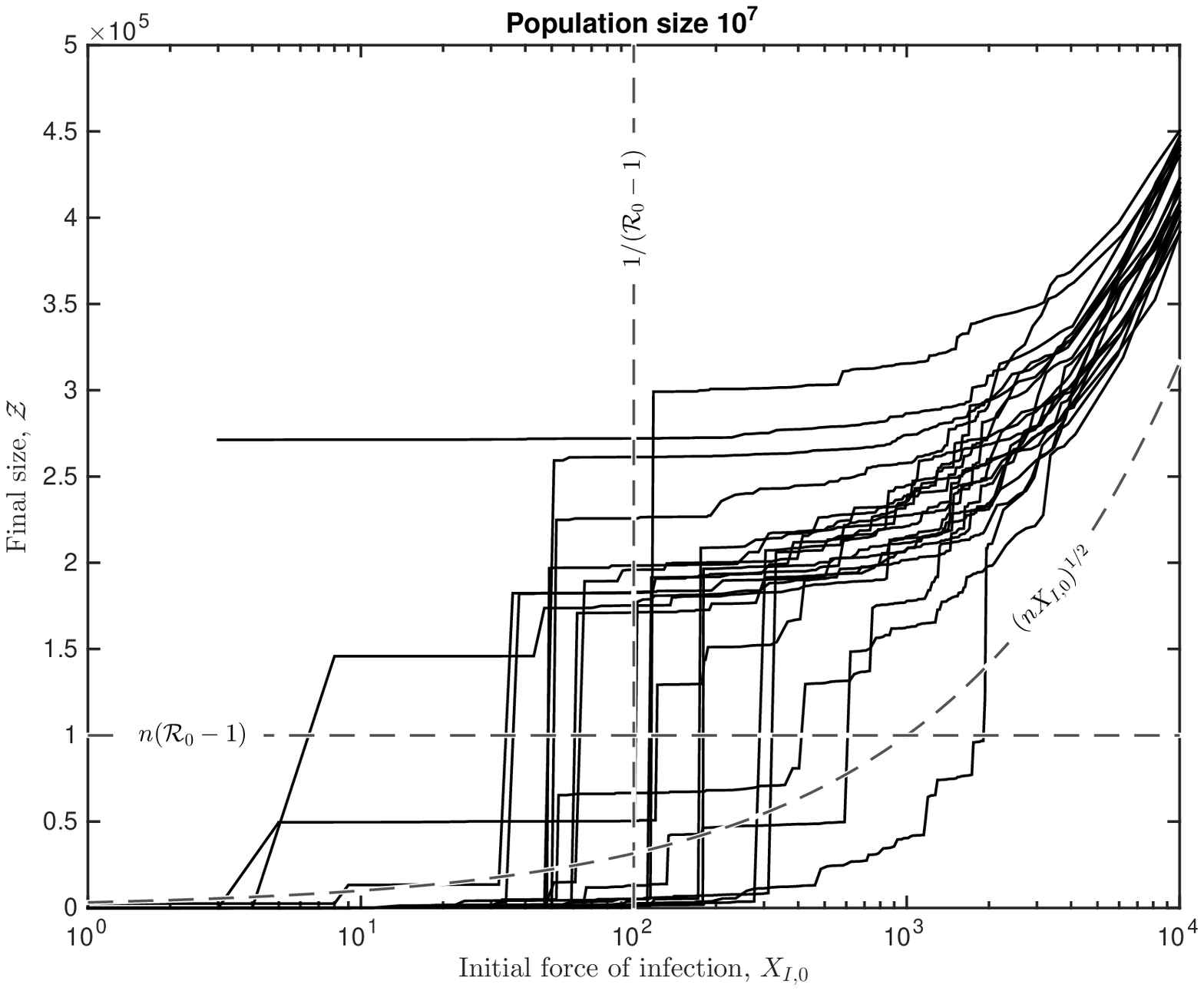}
\caption{The relationship between epidemic final size and initial force of
	infection for 20 realisations of the network Sellke construction for $n=10^5$
	(top), $n=10^6$ (middle) and $n=10^7$ (bottom), $\rho=1$, $\beta=1$, and
	Poisson degree distribution with mean $\lambda=2.02$ leading to
$\mathcal{R}_0 =1.01$.}
\label{fig:fs}
\end{figure}

\end{document}